\documentclass{article}
%%%%%%%%%%%%%%%%%%%%%%%%%%%%%%%%%%%%%%%%%%%%%
%       
%%%%%%%%%%%%%%%%%%%%%%%%%%%%%%%%%%%%%%%%%%%%%

%PAPER FORMAT

\textwidth = 5.8truein 
\textheight = 9.1truein 
\topmargin = -0.7truein
\oddsidemargin = 0.2truein 
\evensidemargin = 0.2truein 

\unitlength 1mm

\usepackage[comma,square,sort&compress, numbers]{natbib}
\usepackage[greek, english]{babel}
\usepackage{amsbsy,amssymb,amscd,amsmath,amsthm,amsfonts}
\usepackage{eurosym}
\usepackage{epsf}
\usepackage{epsfig}
\usepackage{color}
\usepackage{bbold}
\usepackage[T1]{fontenc}
\usepackage[utf8]{inputenc}
\usepackage{authblk}
\usepackage{enumitem}

\newfont{\gothic}{eufm10}

                   \def\R{{\RR}}
\def\RR{{\mathbb{R}}}        \def\N{{\mathbb{N}}}

        \newtheorem{theorem}{Theorem}[section]
\newtheorem{lemma}[theorem]{Lemma}
\newtheorem{proposition}[theorem]{Proposition}
\newtheorem{corollary}[theorem]{Corollary}
\newtheorem{definition1}[theorem]{Definition}

\newenvironment{definition}{\begin{definition1}\rm}{\end{definition1}}

\newtheorem{remark1}[theorem]{Remark}
\newenvironment{remark}{\begin{remark1}\rm}{\end{remark1}}

\newtheorem{example1}[theorem]{Example}

\def\barray{\begin{eqnarray*}}             \def\earray{\end{eqnarray*}}
\def\beq{\begin{equation}} \def\eeq{\end{equation}}
\DeclareMathOperator{\id}{id}

\DeclareMathOperator{\out}{out}
\DeclareMathOperator{\inn}{in}
\DeclareMathOperator{\f}{f}

\makeatletter 
\title{Minimisers of the Allen-Cahn equation and the asymptotic Plateau problem on hyperbolic groups}  
%\author[$\ddag$]{Victor Bangert}
\author{Bla\v z Mramor } 
%\affil[$\ddag$]{\small Department of Mathematics, University of Freiburg, Germany.} 
\affil{\small Freiburg Institute for Advance Sciences (FRIAS), University of Freiburg, Germany. }
\date{}

\begin{document}  \hyphenation{asympto-ti-cally}

\newcommand{\p}{\partial}
\maketitle

\noindent

\abstract{\noindent 
We investigate the existence of non-constant uniformly-bounded minimal solutions of the Allen-Cahn equation on a Gromov-hyperbolic group. We show that whenever the Laplace term in the Allen-Cahn equation is small enough, there exist minimal solutions satisfying a large class of prescribed asymptotic behaviours. For a phase field model on a hyperbolic group, such solutions describe phase transitions that asymptotically converge towards prescribed phases, given by asymptotic directions.
In the spirit of de Giorgi's conjecture, we then fix an asymptotic behaviour and let the Laplace term go to zero. In the limit we obtain a solution to a corresponding asymptotic Plateau problem by $\Gamma$-convergence.}

\section{Introduction}

%\subsection{Problem and results}
Consider a group $G$ with a fixed finite symmetric set of generators $S$. The Cayley graph $C(G,S)$ of $G$ with respect to $S$ with the word metric is a complete metric space. We assume that $G$ is word hyperbolic, i.e.~there exists a $\delta>0$, such that $C(G,S)$ is a $\delta$-hyperbolic metric space in the sense of Gromov. There is a natural way of defining the {\it boundary at infinity of $G$}, denoted by $\p G$, such that the space $G\cup \p G$ metrized by the so-called {\it visual metric} becomes a compact metric space. This is explained more precisely in section \ref{geometric_groups}, where we also present some basic properties of these objects, together with some references to the relevant literature. We study the Allen-Cahn equation on $G$.

The Allen-Cahn equation is defined as follows. Let $V:\R\to \R$ be a Morse function with two absolute minima $c_0,c_1\in \R$, with value bounded away from all other critical values, and let us denote for every function $x:G\to\R$ the discrete Laplace operator by $$(\Delta x)_g:=\sum_{s\in S}(x_{gs}-x_g) \ .$$ The Allen-Cahn equation for any $\rho>0$ is then given by \begin{equation}\label{rr} \rho(\Delta x)_g-V'(x_g)=0 , \text{ for all } \ g\in G .\end{equation} 

The equation (\ref{rr}) comes with a variational structure given by the formal action functional \begin{equation}\label{fe}W(x):=\sum_{g\in G}\sum_{s\in S}\left(\frac{\rho}{4}(x_{gs}-x_g)^2+V(x_g)\right),\end{equation} which has a well defined gradient $\nabla_gW(x)=\rho(\Delta x)_g-V'(x_g)$ (see also section \ref{var_prob}). In the literature, one often takes the potential to be $\displaystyle V(y):=\frac{1}{4}(1-y^2)^2$. As is usual in the calculus of variation, we call an entire solution $x:G\to \R$ of (\ref{rr}) a global minimiser, if it minimises the action (\ref{fe}) globally, i.e., if for every finitely supported variation $v:G\to \R$ of $x$ $$W(x+v) \geq W(x) $$ (this is explained more precisely in definition \ref{def_gm}). 

The first goal of this paper is to construct a large class of uniformly bounded global minimisers of the Allen-Cahn equation (\ref{rr}), which have prescribed asymptotic behaviour, tending to either $c_0$ or $c_1$ along infinite geodesic rays in $G$. 

For technical reasons, we only consider the equation (\ref{rr}) when the constant $\rho$ is sufficiently small. In this case, as explained in section \ref{anticontinuum}, solutions of (\ref{rr}) can be obtained as continuations from the so-called anti-continuum limit by an implicit-function-type argument. They are in the supremum norm close to a function $x^0:G\to \R$, such that for every $g\in G$, $x^0_g=c$ for some critical point $c$ of $V$ which may depend on $g$. 
%In the specific case of global minimisers, we may, due to the ellipticity of the equation (\ref{rr}), limit our attention to such global minimisers, whose graphs lie between the graphs of the constant functions $c_0$ and $c_1$. For such a global minimiser $x$, it turns out that for every $g\in G$ the value $x_g$ is close to either $c_0$, or $c_1$. 
It follows from the results in section \ref{var_prob} that if $\rho$ is sufficiently small and $x$ is a global minimiser such that $x_g$ is contained in the interval $[c_0,c_1]$, then for every $g\in G$ the value $x_g$ is uniformly close to either $c_0$ or $c_1$. This observation simplifies our analysis of global minimisers. 

Our main result, given in theorem \ref{dirichlet}, is to construct global minimisers of the Allen-Cahn equation (\ref{rr}) with such small constant $\rho$, which have prescribed asymptotics. More precisely, we solve the {\it minimal Dirichlet problem at infinity}, defined as follows.

\begin{definition}\label{dp_def}
Given a set $D_0\subset \p G$ with $\overline{\mathring{D}}_0=D_0$, let $D_1:=\overline{\p G\backslash D_0}$. We say that a global minimiser $x$ solves the minimal Dirichlet problem at infinity for (\ref{rr}) on $D_0$ and $D_1$, if the following holds. For every $\xi \in \mathring{D}_j$ where $j\in \{0,1\}$ and for every $\epsilon>0$, there exists a neighbourhood $\mathcal{O}_\xi\subset G\cup \p G$ of $\xi$ in the topology given by the visual metric, such that $|x_g-c_j|<\varepsilon$ for all $g\in \mathcal{O}_\xi\cap G$.

%for which one of the following two cases holds: $$\begin{aligned}\text{ either } &\mathcal{O}_\xi\cap \mathring{D}_{1}=\varnothing \ \text{ and then for all } g\in \mathcal{O}_\xi, \ |x_g-c_0|<\epsilon, \\ \text{ or } &\mathcal{O}_\xi\cap\mathring{D}_0=\varnothing \ \text{ and then for all } g\in \mathcal{O}_\xi,\ |x_g-c_1|<\epsilon .\end{aligned}$$

%Given two sets $D_0, D_1\subset \p G$ with $\overline{\mathring{D}}_0=D_0$ and $\overline{\mathring{D}}_1=D_1$, such that $\mathring{D_0}\cap \mathring{D_1}=\varnothing$ and $\overline{D_0}\cup \overline{D_1}=\p G$, %, such that $\overline{D_0} \cup \overline{D_1}=\p \mathcal{K}$ 
%we say that a global minimiser $x$ solves the minimal Dirichlet problem at infinity for (\ref{rr}) on $D_0$ and $D_1$, if the following holds. For every $\xi \in \mathring{D}_{0}\cup\mathring{D}_1$ and for every $\epsilon>0$, there exists a neighbourhood $\mathcal{O}_\xi\subset G\cup \p G$ of $\xi$ with respect to the visual metric, such that for all $g\in \mathcal{O}_\xi$, $\min\{|x_g-c_0|,|x_g-c_1|\}<\epsilon$.
\end{definition}

The solutions that we obtain by solving the minimal Dirichlet problem at infinity, can be used to solve a version of the asymptotic Plateau problem. This is the content of section \ref{plateau_section}. We show that given two sets $D_0, D_1\subset \p G$, which are as in definition \ref{dp_def}, there exist corresponding sets $\mathcal{D}_0\subset G$ and $\mathcal{D}_1\subset G$ with $\mathcal{D}_0\cap\mathcal{D}_1=\varnothing$ and $\mathcal{D}_0\cup \mathcal{D}_1=G$, which asymptotically converge to $\mathring{D}_0$ and $\mathring{D}_1$, respectively, and such that they satisfy a particular minimality property. More precisely, the number of edges in the Cayley graph $C(G,S)$, which connect $\mathcal{D}_0$ to $\mathcal{D}_1$ is minimal with respect to finite perturbations. See definition \ref{pp_def} and theorem \ref{plateau} for the exact statement.

\subsection{Motivation}

The Allen-Cahn equation first appeared in the study of the phase field models on the Euclidean space and gained its popularity in geometry due to the work on de Giorgi's conjecture, which connects it to the study of minimal hyperplanes. In both cases one is interested in minimal solutions of (\ref{rr}) with small constants $\rho$. 

In general, two obvious global minimisers of the Allen-Cahn equation exist, namely, $x\equiv c_0$ and $x\equiv c_1$. In the study of entire solutions, the interesting research questions revolve around describing the set of global minimisers that are (due to the ellipticity of the differential operator) trapped between the graphs of $c_0$ and $c_1$ and asymptotically connect them. When one considers the Allen-Cahn equation on hyperbolic spaces, one expects to have many more such global minimisers than in the Euclidean setting. 

This is specifically motivated by the Laplace equation, the prototypical case of a variational elliptic problem. In this case it is well known that all harmonic functions are global minimisers. In the hyperbolic setting, one approach to obtain non-constant entire solutions is the extensively studied Dirichlet problem at infinity for harmonic functions. Contrary to the Euclidean case, where all entire harmonic functions are constant, the solutions to the Dirichlet problem at infinity on hyperbolic metric spaces give us a wealth of harmonic functions with asymptotic behaviour prescribed by a function on the boundary at infinity. On manifolds with pinched negative sectional curvature such solutions have first been constructed by Choi in \cite{choi} and by Anderson in \cite{anderson83}, by the method of barriers. They solved the Dirichlet problem at infinity given by any continuous function on the boundary at infinity. Further solutions were obtained by Sullivan in \cite{sullivan79, sullivan83} for more general boundary conditions using a probabilistic approach. In the context of hyperbolic groups, this problem was first studied by Ancona in \cite{ancona}. There are numerous generalisations of these results, on one hand focusing on more general operators which have all constant functions as solutions, and on the other hand focusing on more general underlying spaces. For a recent state of the art result see e.g. \cite{holopainen_lang}. An important observation for us is that neither the approach of barriers, which uses the foliation by constants in a crucial way, nor a probabilistic approach using random walks, are applicable to solving the Allen-Cahn equation. 

The focus on finding minimal solutions and not general solutions is on one hand driven by physical motivation, where one searches for energy-minimising states in the phase field models, and on the other hand by the work on de Giorgi's conjecture, relating minimal solutions to minimal hypersurfaces. Another reason to focus on global minimisers and not general solutions is the observation, explained in detail in section \ref{anticontinuum}, that for small enough constants $\rho>0$ in the Allen-Cahn equation (\ref{rr}) on hyperbolic groups, one can easily find a wealth of solutions via a version of the implicit function theorem. Thus the only difficult question is whether non-constant global minimisers exist. 

For hyperbolic manifolds with constant sectional curvature similar results to those presented in this paper have been obtained by Pisante and Ponsiglione in \cite{italians} and by Mazzeo and Saez in \cite{mazzeo}. In their proofs the symmetry of the hyperbolic space plays a crucial role and allows them to reduce the PDE to an ODE. Other existence results about minimisers for more general nonlinear variational elliptic operators on quite general groups have been discussed in \cite{llave-lattices,candel-llave}, but they discuss minimisers on the abelianisation of the group, which are not necessarily global minimisers of the group itself. For hyperbolic groups we thus had to develop a new approach, which is based on the variational structure of the equation and is in the spirit of de Giorgi's conjecture. %We believe this approach can be adapted to more general situations. 

To solve the minimal Dirichlet problem at infinity we construct global minimisers with prescribed asymptotic behaviour as limits of functions $x^n$ for $n\in \N$, where for every $n$, $x^n$ is a minimal solution of a Dirichlet problem on a ball of radius $n$, with boundary values given by a function with values in $c_0$ and $c_1$. The main problem is then to show that the functions $x^n$ do not converge to one of the constant solutions $c_0$ or $c_1$. We conjecture that this happens in the case that the group $G$ is amenable and the interior of $D_0$ is quite small compared to the interior of $D_1$, with respect to the Patterson-Sullivan measure (see section \ref{PS-section}). %A crucial component in our proofs is the exponential growth of cones. 
Note that the the choice of a small constant $\rho$ is not necessary for the existence of minimisers $x^n$, which in general follows from the coercivity of the equation (\ref{rr}), as noted in lemma \ref{existence_lemma}. The condition that $\rho$ is small, however, gives us very precise control of the behaviour of minimisers. It might be possible to obtain similar results for general constants $\rho$, but such results are not yet within our reach. 

Using the solutions obtained by solving the minimal Dirichlet problem at infinity, we solve a corresponding asymptotic Plateau problem using a simple version of $\Gamma$-convergence for vanishing $\rho$. In the setting of riemannian manifolds, the asymptotic Plateau problem poses the question of finding area-minimising submanifolds with prescribed asymptotic behaviour. In the Euclidean case, such solutions are fairly restricted, as showed by the work on de Giorgi's conjecture. Indeed, in dimension smaller than nine, all such solutions are affine hyperplanes (see \cite{delpino2} for an overview of related results). For hyperbolic manifolds, on the other hand, the class of solutions is vastly greater. The first results in this direction were obtained by Anderson for the hyperbolic space $\mathbb{H}^n$ in \cite{anderson83a}. For Gromov-hyperbolic riemannian manifolds the existence of solutions to the asymptotic Plateau problem was proved by Lang in \cite{lang03}, based on a Morse-type lemma for quasi-minimal hypersurfaces given by Bangert and Lang in \cite{bangert_lang}. %Coskunuzer addresses questions of uniqueness for three dimensional manifolds in \cite{coskunuzer}.
In the special case of codimension one, absolutely minimal submanifolds are closely related to globally minimal solutions of the Allen-Cahn equation, and are obtained by $\Gamma$-convergence with the parameter $\rho\to 0$. In the Euclidean setting, this was conjectured by de Giorgi and proved by Modica in \cite{modica} (see also \cite{delpino}). The same approach was taken for the hyperbolic space in \cite{italians}. We extend these results to hyperbolic groups, by replacing the concept of minimal hypersurfaces of codimension one by an appropriate notion of minimality of the boundaries of sets.

\subsection{Outline}
Our paper and proofs are structured as follows. Section \ref{geometric_groups} is focused on hyperbolic groups. We review some basics about their boundary and growth, and define geometric objects called cones, which generate a ``cone topology'' equivalent to the visual metric topology, and which is similar to the cone topology used in the study of the Dirichlet problem at infinity in \cite{choi}. In section \ref{var_prob} we discuss the variational structure of the Allen-Cahn equation, the notion of minimality, and the existence of solutions via the so-called anti-continuum limit the proof of which is contained in the appendix. In section \ref{DP}, this allows us to split the group $G$ in two subsets, one where the global minimiser is approximately $c_0$, and the other where it is approximately $c_1$. The set of points of $G$, which are at most distance one to both of these sets is called the transition set and is in a particular sense quasi-minimal. We use growth estimates and an isoperimetric profile estimate for the hyperbolic group to show that this transition set extends uniformly towards the identity. The bounds on the distance from the set to the identity depend in principle only on the hyperbolicity constant $\delta$ and the largest radius of a ball at infinity which is fully contained in $D_0$ or $D_1$. This allows us to prove the existence of minimisers with prescribed asymptotics. Moreover, the fact that the bound is independent of the constant $\rho$ lets us take the limit $\rho\to 0$ and use a $\Gamma$-convergence type of argument to solve the asymptotic Plateau problem in section \ref{plateau_section}.

\subsection{Acknowledgement }
I would like to thank Prof.~V.~Bangert for the helpful conversations and for his valuable comments.\\ 

\noindent
This paper was written during the author's stay at the Freiburg Institute for Advance Sciences in the scope of the FRIAS COFUND fellowship programme. The research was funded from People Programme (Marie Curie Actions) of the European Union's FP7/2007-2013, REA grant agreement nb. 609305.

%T small step of investigating Moser-type elliptic PDEs on hyperbolic manifolds and similar.

\section{Hyperbolic groups}\label{geometric_groups}

In this section we gather some basic facts about hyperbolic groups. The general framework of hyperbolic groups was first introduced by Gromov in \cite{gromov1} and further developed in \cite{gromov2}. There exists an extensive literature on the subject. For more thorough overviews, we refer the reader to \cite{harpe,notes1,notes2}. Typical examples are fundamental groups of manifolds with strictly negative sectional curvature and free groups.

Let $\mathcal{K}=C(G,S)$ denote the Cayley graph of a group $G$, which has a finite symmetric set of generators $S$. Setting the length of the edges to one, the word metric $d(\cdot,\cdot)$ is defined as follows: for every two points $g,\tilde g\in G$, $d(g,\tilde g)$ is the least length of a path in $\mathcal{K}$ connecting $g$ and $\tilde g$. A curve $\gamma_{g,\tilde g}$ from $g$ to $\tilde g$ of length $l(\gamma_{g,\tilde g})=d(g,\tilde g)$ is called a geodesic and needs not to be unique (the analogous concept is a minimal geodesic in the riemannian case). With the word metric, the space $(\mathcal{K}, d)$ becomes a complete geodesic metric space, i.e. geodesics between any two points exist. We denote for every $g\in G$ its word length by $|g|:=d(\id,g)$.

We assume that $G$ is word-hyperbolic, that is, we assume that $(\mathcal{K},d)$ is a $\delta$-hyperbolic metric space, for some constant $\delta>0$. One way to describe such spaces is to say that every geodesic triangle (i.e. a collection of three points $g_1,g_2,g_3\in G$ together with three geodesics $\gamma_{g_1,g_2},\gamma_{g_2,g_3},\gamma_{g_3,g_1}\subset \mathcal{K}$) is $\delta$-slim (i.e. every geodesic ``side'' is in the $\delta$-neighbourhood of the other two geodesic sides). Furthermore, we assume that $G$ is non-elementary, i.e.~$G$ is not finite and does not contain an infinite cyclic subgroup of finite index. 

Hyperbolicity is independent of the generating set, but the topology of $\mathcal{K}$ is not. For this paper we thus choose a fixed symmetric generating set $S$, i.e. $S=S^{-1}$. %, with the set of relations $R$ and write $G:=F_S/\langle R\rangle$, where $F_S$ is the free group generated by $S$. In other words, we think of $G$ as represented by $S$ from now on. 
The distance $d$ on the Cayley graph $\mathcal{K}$ restricts to a distance on the group $G\subset \mathcal{K}$, which we also call $d(\cdot,\cdot)$.

It is well known that non-elementary word-hyperbolic groups exhibit exponential growth and that they are non-amenable. The precise statements about their behaviour which we need in our proofs, together with some references to the relevant literature, are collected below.

\subsection{Boundaries of sets and the isoperimetric profile}

%Before we explain the connection between the growth of the group and the topology of the boundary at infinity, we state another important property of hyperbolic groups, which results from exponential growth. 
Let us define for a set $\mathcal{B}\subset G$ its ``outer'' and its ``inner'' set $$\mathcal{B}^{\out}:=\{g \in \mathcal{K} \ | \ gs \in \mathcal{B} \text{ for some } s\in S\} \ \text{ and }\ \mathcal{B}^{\inn}:=\{g \in \mathcal{B} \ | \ gs \in \mathcal{B} \text{ for all } s\in S\}.$$ Furthermore, we define the outer and the inner boundaries of $\mathcal{B}$ by $\p^{\out} \mathcal{B}:=\mathcal{B}^{\out} \backslash \mathcal{B}$ and $\p^{\inn} \mathcal{B}:=\mathcal{B} \backslash \mathcal{B}^{\inn}$ and the ``full'' boundary by $\p^{\f} \mathcal{B}:=\p^{\out} \mathcal{B}\cup\p^{\inn} \mathcal{B}$. Since $d(g,h)=1$ if and only if $h=gs$ for some $s\in S$ (then obviously also $g=hs^{-1}$), an equivalent definition for the set $\mathcal{B}^{\out}$ is $\mathcal{B}^{\out}=\{g \in \mathcal{K} \ | d(g,\mathcal{B})\leq 1\}$, from which it easily follows that \begin{equation}\label{boundary}\p^{\out}(\mathcal{B}\cap \mathcal{D})=(\p^{\out} \mathcal{B}\cap  \mathcal{D}^{\out})\cup (\mathcal{B}^{\out}\cap \p^{\out} \mathcal{D}) \end{equation} for any two set $\mathcal{B},\mathcal{D}\subset G$.

The following almost linear isoperimetric inequality holds. It was first proved by Coulhon in \cite{coulhon}, where the author investigates the behaviour of random walks on hyperbolic groups. %Random walks and geometry on infinite graphs, Lecture notes on analysis on metric spaces, Trento, C.I.R.M., 1999. Luigi Ambrosio, Francesco Serra Cassano, éd., Scuola Normale Superiore di Pisa, (2000) 5-30

\begin{lemma}\label{isoperimetric}
There exists a constant $k_0$ such that for every finite set $\mathcal{B}\subset G$ we have $$\frac{\#\mathcal{B}}{\log(\#\mathcal{B})}\leq k_0\cdot\#(\p^{\out}\mathcal{B}) \ ,$$ where $\# \mathcal{B}$ denotes the number of points contained in the set $\mathcal{B}$
\end{lemma}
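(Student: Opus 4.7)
The plan is to derive the stated inequality from the strictly stronger linear isoperimetric inequality $\#\partial^{\out}\mathcal{B}\geq h\cdot\#\mathcal{B}$, which is equivalent to non-amenability of the Cayley graph $C(G,S)$. Since a non-elementary word-hyperbolic group is non-amenable, this will suffice.

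First I would establish that $G$ contains a free subgroup of rank two. The classical route is to exhibit two loxodromic elements $a,b\in G$ whose fixed-point pairs in $\partial G$ are pairwise disjoint; their existence is guaranteed by the non-elementarity hypothesis via the convergence-group action of $G$ on $\partial G$. A ping-pong argument applied to suitably chosen neighbourhoods of their attracting and repelling fixed points in $G\cup\partial G$ then shows that sufficiently high powers $a^N,b^N$ generate a free subgroup $\langle a^N,b^N\rangle\cong F_2$. Since non-amenability passes to overgroups, $G$ is non-amenable.

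By F{\o}lner's criterion, non-amenability of $G$ relative to the Cayley graph $C(G,S)$ is equivalent to the existence of a constant $h>0$ such that $\#\partial^{\out}\mathcal{B}\geq h\cdot\#\mathcal{B}$ for every finite $\mathcal{B}\subset G$. Setting $k_0:=h^{-1}$, the desired bound
\[ \frac{\#\mathcal{B}}{\log(\#\mathcal{B})}\;\leq\;\#\mathcal{B}\;\leq\;k_0\cdot\#\partial^{\out}\mathcal{B} \]
holds whenever $\#\mathcal{B}\geq e$. The finitely many small cases $\#\mathcal{B}\in\{2,3\}$ can be absorbed into $k_0$ by possibly enlarging it, using the trivial bound $\#\partial^{\out}\mathcal{B}\geq 1$.

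I expect the main obstacle to be the first step: producing $F_2\subset G$ is a non-trivial input that requires some analysis of the dynamics on $\partial G$. An alternative route, closer in spirit to Coulhon's original argument, would avoid non-amenability altogether and rely only on the exponential growth $\#B_G(n)\geq c\,a^n$ of non-elementary hyperbolic groups, combined with a Nash-type convolution inequality that relates the F{\o}lner function to the inverse of the volume-growth function. In that approach the logarithm in the statement emerges precisely as the inverse of an exponential growth function, which also clarifies why the lemma is naturally phrased in its ``almost linear'' form rather than the stronger linear one.
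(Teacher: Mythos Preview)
Your argument is correct: non-elementary hyperbolic groups are non-amenable (via the free subgroup / ping-pong route you sketch, or directly via the convergence action on $\partial G$), and F{\o}lner's criterion then yields the \emph{linear} isoperimetric inequality $\#\partial^{\out}\mathcal{B}\geq h\,\#\mathcal{B}$, which trivially implies the stated almost-linear one after adjusting $k_0$ for the small cases.

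The paper, however, does not prove this lemma at all; it simply cites Coulhon. Coulhon's result is the alternative you mention at the end: it applies to any finitely generated group of exponential growth and produces the almost-linear bound $\#\mathcal{B}/\log(\#\mathcal{B})\lesssim\#\partial^{\out}\mathcal{B}$ via growth/F{\o}lner function considerations, without invoking non-amenability. So your route is genuinely different and in fact yields a strictly stronger conclusion under the paper's standing hypothesis that $G$ is non-elementary hyperbolic. The trade-off is that Coulhon's argument needs only the exponential growth estimate (which the paper already records as \eqref{cannon} and \eqref{cannon_improved}), whereas your route requires the extra input $F_2\hookrightarrow G$. Either is adequate here; the paper itself already notes in the preamble to Section~\ref{geometric_groups} that non-elementary word-hyperbolic groups are non-amenable, so your approach is entirely in keeping with the ambient assumptions.
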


%In fact, it is also known that hyperbolic groups are uniformly non-amenable. This can be found in \cite{arzhantseva2}.

\subsection{The boundary at infinity}

Nice overviews of results on the boundary at infinity can be found in \cite{kapovich} and \cite{calegari}. 

\subsubsection{Definition of the boundary at infinity}

A geodesic ray is given by an isometry $\gamma:[0,\infty)\to \mathcal{K}$, where $[0,\infty)$ is equipped with the standard metric. On $\delta$-hyperbolic spaces, one may define an equivalence relation on the set of geodesic rays, by $\gamma_1\sim \gamma_2$, if there exists a $C\in \R$ such that $d(\gamma_1(t),\gamma_2(t))\leq C$ for all $t\in \R_+$. One then defines the boundary at infinity $\p \mathcal{K}$ as the set of such equivalence classes of rays.
Defining $\overline{\mathcal{K}}:=\mathcal{K}\cup\p\mathcal{K}$, we may for every point $\xi\in \p\mathcal{K}$, extend any ray $\gamma\in \xi$ to $\gamma:[0,\infty]\to \overline{\mathcal{K}}$, by defining $\gamma(\infty)=\xi$. It then holds that for every $g\in \mathcal{K},\xi \in \p \mathcal{K}$ there exists a geodesic ray $\gamma_{g,\xi}\subset \overline{\mathcal{K}}$, such that $\gamma_{g,\xi}(0)=g$ and $\gamma_{g,\xi}(\infty)=\xi$. Moreover, for any two points $\xi, \mu \in \p \mathcal{K}$ there exists an (infinite) geodesic $\gamma_{\xi,\mu}\subset \overline{\mathcal{K}}$, connecting these two points at infinity, i.e. the ray $\gamma_{\xi,\mu}|_{[0,-\infty)}$ belongs to the equivalence class $\xi$ and $\gamma_{\xi,\mu}|_{[0,\infty)}$ to $\mu$. With these definitions, geodesic triangles in $\overline{\mathcal{K}}$ are also $\tilde \delta$ slim for a uniform constant $\tilde \delta$ (see \cite{kapovich}). For $G,S$ as above, we define the boundary $\p G$ of the group $G$ by $\p G:=\p\mathcal{K}$.

\subsubsection{The visual metric}

The so-called visual metric makes $\overline{\mathcal{K}}$ into a compact geodesic metric space. It is defined for every $y,\tilde y\in \mathcal{K}$ by $$d_\varepsilon(y,\tilde y):= \inf_{p_{y,\tilde y}}\int_0^{l(p_{y,\tilde y})}e^{-\varepsilon \cdot d(\id, p_{y,\tilde y}(s))}ds \ ,$$ where $p_{y,\tilde y}\subset \mathcal{K}$ is a path from $y$ to $\tilde y$. Let $\xi,\mu\in \p \mathcal{K}$ and let $y\in \gamma_1$ where $[\gamma_1]= \xi$ and $\tilde y\in \gamma_2$ where $[\gamma_2]= \mu.$ It follows from $\delta$-hyperbolicity that there exists an $\varepsilon_0>0$, such that for all $0<\varepsilon\leq\varepsilon_0$, the limit of $d_\varepsilon(y,\tilde y)$ for $|y|,|\tilde y|\to \infty $ is uniformly bounded and that it depends only on the equivalence classes $\xi$ and $\mu$. This gives us for every $0<\varepsilon\leq \varepsilon_0$ a metric on $\overline{\mathcal{K}}$, for which there exists a constant $\lambda>0$, such that for all $\xi,\mu\in \p \mathcal{K}$ and every infinite geodesic $\gamma_{\xi,\mu}$\begin{equation}\label{vm_estimate}\lambda^{-1}e^{-\varepsilon d(\id, \gamma_{\xi, \mu})}\leq  d_\varepsilon(\xi, \mu)\leq \lambda e^{-\varepsilon d(\id, \gamma_{\xi, \mu})}. \end{equation} 

Obviously, the restriction of $d_\varepsilon$ to $G\cup \p G$, gives us also a compact metric space. We shall use the following notations for balls in $G$, $\p G$, or in $G\cup \p G$. For every $n\in N$ and $r\geq 0$, and for every $g_0\in G$, $\xi_0\in \p G$ and $y_0 \in G \cup \p G$ define
\begin{equation}\begin{aligned}
&\mathcal{B}_n:=\{g\in G \ | \ |g|\leq n\}, \ \mathcal{B}_n(g_0):=\{ g\in G \ | \ d(g_0, g)\leq n\} \text{ and }  \mathcal{S}_n:=\{g\in G \ | \ |g|= n\},\\
&B^\varepsilon_r(\xi_0):=\{\xi \in \p G \ | \ d_\varepsilon(\xi_0,\xi)\leq r \}, \\
&\mathcal{B}^\varepsilon_r(y_0):=\{y \in G \cup \p G \ | \ d_\varepsilon(y_0,y)\leq r \}.
\end{aligned}\end{equation}

%Similarly, we define by $\mathcal{S}_n$ the $n$-sphere in the $d$-metric.

\subsection{Growth of the group and the topology of the boundary}

The following section describes some results on the growth and the entropy of $(G,S)$. Furthermore, it introduces the so called Patterson-Sullivan measure, a doubling measure with respect to the visual metric, which is supported on the boundary at infinity, and a concept of dimension of the boundary at infinity. These results are mostly due to Coornaert, based on results of Cannon, see e.g. \cite{echlpt} or \cite{calegari} for nice overviews.

\subsubsection{Entropy and Cannon's theory}

Let $(G,S)$ be as above. % and let $\mathcal{B}_n\subset G$ be the $n$-Ball and $\mathcal{S}_n$ the $n$-sphere in the $d$-metric. 
The {\it entropy} of the group $G$ with the generating set $S$ is then defined by $$h(G,S):=\limsup_{n\to \infty} \frac{1}{n}\log(\#(\mathcal{B}_n)).$$ 

An estimate for the entropy can be obtained by analysing the formal growth function for $(G,S)$, which is defined by $s(t):=\sum_n (\#\mathcal{S}_n)t^n$. Cannon's methods from \cite{cannon}, where the word problem for discrete co-compact hyperbolic space groups is solved, show that general hyperbolic groups may be viewed as finite state automata, which implies that $s(t)$ is rational. As a consequence,  a non-elementary word-hyperbolic group $G$ with the generating set $S$ grows uniformly exponentially, i.e. there exists a $C>0$, an integer $k\geq 0$, and an $h>0$, such that \begin{equation}\label{cannon}C^{-1}e^{h\cdot n}n^k\leq \#(\mathcal{B}_n)\leq C e^{h\cdot n}n^k.\end{equation} In particular we obtain that $h(G,S)=h>0$.

\subsubsection{The Patterson-Sullivan measure}\label{PS-section}
Using the entropy of the group and the methods that Sullivan and Patterson developed in \cite{sullivan79} and \cite{patterson}, Coornaert investigated in \cite{coornaert93} the topological properties of $\p G$. He defined a probability measure $\nu$, supported on $\p G$, as follows. %Let $$\zeta_G(s):=\sum_{g\in G}e^{-s|g|}$$ be the (Poincare) zeta function of $G$ and d
Define for every $s>h=h(G,S)$ a probability measure $\nu_s$ supported on $G$, as a sum of Dirac measures at every $g\in G$ with weights $$\frac{e^{-s|g|}}{\sum_{g\in G}e^{-s|g|}} \ . $$ Using the uniform estimate (\ref{cannon}) for the growth rate of the size of $\mathcal{B}_n$ and by compactness of the space of probability measures on $G\cup \p G$, he argued that there exists a limit measure $\nu$ for $s\to h$, which is supported on $\p G$. This is referred to as the {\it Patterson-Sullivan measure}. The boundary at infinity with the visual metric and the Patterson-Sullivan measure is a metric measure space. Coornaert showed furthermore that for all $g\in G$ the pullback $g_*\nu$ is absolutely continuous with respect to $\nu$ and that the Radon-Nikodym derivative $d(g_*\nu)/d\nu$ is uniformly bounded. For this, he developed  in \cite{coornaert-p} a theory about horospheres in hyperbolic groups and their correspondence to the boundary at infinity. Roughly put, the bound behaves like $e^{-h  \cdot d(g,\mathcal{H})}$, where $\mathcal{H}$ is a horosphere corresponding to the appropriate point at infinity. This uniform bound can be used to analyse the correspondence between the measure and the metric.

%The measure $\nu$ with respect to the $\varepsilon$-visual metric is a so-called quasiconformal measure of dimension $D$, where $D=h/\varepsilon$. This roughly means the following. There exists a constant $C_0$, such that for all $\nu$-measurable sets $A\in \p \mathcal{K}$, $$C_0^{-1}H^D(A) \leq \nu(A) \leq C_0H^D(A),$$ where $H^D$ denotes the $D$-dimensional Hausdorff measure. Moreover, for all $g\in G$, $g_*\nu$ is absolutely continuous with respect to $\nu$ and the Radon-Nikodym derivative $d(g_*\nu)/d\nu$ is uniformly bounded, where the bound behaves like $e^{-\varepsilon D \cdot d(g,\mathcal{H})}$ and $\mathcal{H}$ is a horosphere corresponding to the identity and some direction in $A$. We explain the consequences more precisely in the following section.

\subsubsection{Metric properties of the Patterson-Sullivan measure}

Still following Coornaert in \cite{coornaert93}, the behaviour of the measure $\nu$ with respect to the $\varepsilon$-metric on $\p G$ can be further analysed via the concept of a shadow, originally due to Sullivan.

\begin{definition}
For every $R>0$ and $g\in G$ we define the $R$-shadow of $g$ from $\id$ by $$S(g,R):=\{\xi\in \p G \ | \ \forall \gamma\in \xi \text{ with } \gamma(0)=\id, d(\gamma,g)\leq R\}\subset \p G.$$ 
\end{definition}

In the next section, we shall use shadows to define cones, which one may view as generalisations of the cones in the definition of the topology for the Dirichlet problem at infinity in riemannian manifolds (see \cite{choi, anderson83,anderson_schoen}). The following proposition explains that shadows with a large enough $R$ turn out to be ``almost-round'' and can be used to show that $\p G$ with the measure $\nu$ and the visual metric is a doubling metric measure space. In fact, even more is true, $\nu$ is a so-called $D$-measure, with $D=h/\varepsilon$.

Let us define for every $\xi\in \p G$ the set $\xi_{\id}\subset G$ as the union of all the points $g\in \gamma$, where $\gamma\subset \overline{\mathcal{K}}$ is any ray such that $\gamma(0)=\id$ and $\gamma(\infty)=\xi$. That is, \begin{equation}\label{xiid}\xi_{\id}:=\bigcup_{\gamma=\gamma_{id,\xi}} (\gamma\cap G).\ 
%$\id$ to } \subset \mathcal{K} \ | \ \gamma \text{ is a geodesic ray from $\id$ to } \xi\in \p \mathcal{K}\}.
\end{equation}

Recall the definition of the hyperbolicity constant $\tilde \delta$ for the space $\overline{\mathcal{K}}$.

\begin{proposition}\label{shadow}
There exists an integer $R\geq 2 \tilde \delta$, which we now fix for the rest of this text, such that the following assertions are true:

\begin{enumerate}
\item[(1)] There exists a constant $C_1>0$ such that for all $g\in G$ the measure of the shadow $S(g,R)$ is bounded by $$C_1^{-1}e^{-h |g|}\leq \nu(S(g,R))\leq C_1e^{-h |g|}.$$
\item[(2)] For every $\xi\in \p G$, $r>0$ and $g_1,g_2\in \xi_{\id} \subset G$ there exists a constant $C_2>0$ such that $$\begin{aligned}B_r^\varepsilon(\xi) \subset S(g_1,R)& \ \text{ if } \ C_2 e^{-\varepsilon|g_1|}\geq r \text{ and }\\ S(g_2,R)\subset B_r^\varepsilon(\xi)& \ \text{ if } \ C_2^{-1} e^{-\varepsilon|g_2|}\leq r.\end{aligned}$$
This implies that there exists a constant $C_3$ such that for every $\xi \in \p G$ and for every $r>0$ the measure of the ball $B^\varepsilon_r(\xi)$ is bounded by $$C_3^{-1}r^D\leq \nu(B^\varepsilon_r(\xi))\leq C_3r^D,$$ where $D:=h/\varepsilon$. (In particular, $\nu$ is a doubling measure.)
\item[(3)] For every $g\in G$ and $\xi\in S(g,R)$ there exists a constant $C_4>0$ such that $$S(g,R)\subset B_r^\varepsilon(\xi)\ \text{ if } \ r\geq C_4 e^{-\varepsilon |g|}.$$ 

\end{enumerate}
\end{proposition}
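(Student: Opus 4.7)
The plan is to follow Coornaert's classical approach from \cite{coornaert93}, using two ingredients: the uniform growth estimate \eqref{cannon} for $\#\mathcal{B}_n$ and the visual metric estimate \eqref{vm_estimate}, combined with $\tilde\delta$-slimness of geodesic triangles in $\overline{\mathcal{K}}$. The constant $R$ will be fixed once and for all so that $R\geq 2\tilde\delta$ plus a universal constant accounting for the various ``fellow-travelling'' losses below.

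For (1), I would work at the level of the approximating measures $\nu_s$ for $s>h$ and pass to the limit $s\to h$. The key geometric observation is that, for $R$ large enough, whether a point $h\in G$ satisfies ``every (or some) geodesic from $\id$ to $h$ enters $\mathcal{B}_R(g)$'' is determined, up to a bounded error, by the condition $|h|=|g|+d(g,h)$, i.e.\ $g$ lies on a geodesic from $\id$ to $h$. Using left translation by $g$, these $h$ are in bijection with a subset of $G$ whose cardinality at each level $n$ is sandwiched between multiples of $\#\mathcal{S}_n$. Summing $e^{-s|h|}\asymp e^{-s|g|}e^{-sn}$ over such $h$ and comparing with $Z_s=\sum_{g\in G}e^{-s|g|}$, Cannon's bound \eqref{cannon} gives $\nu_s(S(g,R))\asymp e^{-s|g|}$ with constants independent of $s$ and $g$; taking $s\to h$ yields the claim. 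The quasi-invariance of $\nu$ under $G$ and the horosphere description of $d(g_*\nu)/d\nu$ from \cite{coornaert-p} can alternatively be invoked to give the same bound directly.

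For (2), the inclusions are obtained by converting between the word-metric scale on $G$ and the visual-metric scale on $\partial G$ via \eqref{vm_estimate}. Fix $\xi\in\partial G$ and $r>0$. Choose $g_1\in\xi_{\id}$ with $|g_1|\approx\varepsilon^{-1}\log(C_2/r)$: if $\mu\in B_r^\varepsilon(\xi)$, then \eqref{vm_estimate} forces $d(\id,\gamma_{\xi,\mu})\gtrsim|g_1|$, so by $\tilde\delta$-slimness the rays $\gamma_{\id,\xi}$ and $\gamma_{\id,\mu}$ fellow-travel up to $|g_1|$, whence $\mu\in S(g_1,R)$. Conversely, for $g_2\in\xi_{\id}$ with $|g_2|\approx\varepsilon^{-1}\log(1/(C_2 r))$ and any $\mu\in S(g_2,R)$, the triangle with vertices $\id,\xi,\mu$ forces $\gamma_{\xi,\mu}$ into the $(R+\tilde\delta)$-neighbourhood of $g_2$, giving $d(\id,\gamma_{\xi,\mu})\geq|g_2|-R-\tilde\delta$ and $d_\varepsilon(\xi,\mu)\leq r$ by \eqref{vm_estimate}. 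Combining both inclusions with (1) gives the doubling estimate $\nu(B_r^\varepsilon(\xi))\asymp e^{-h|g_1|}\asymp r^{h/\varepsilon}=r^D$. Statement (3) is just the degenerate case of the second inclusion: if $\xi,\mu\in S(g,R)$, then $\gamma_{\xi,\mu}$ enters $\mathcal{B}_{R+\tilde\delta}(g)$, so $d_\varepsilon(\xi,\mu)\leq \lambda e^{\varepsilon(R+\tilde\delta)}e^{-\varepsilon|g|}$, and one sets $C_4:=\lambda e^{\varepsilon(R+\tilde\delta)}$.

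The main obstacle is the lower bound in (1): one must exhibit, uniformly in $g\in G$ and in $s$ slightly above $h$, a constant fraction of elements $h\in g\cdot\mathcal{B}_n$ whose geodesic to $\id$ really passes through $\mathcal{B}_R(g)$. This requires non-elementarity of $G$ together with the uniform (polynomial $\times$ exponential) lower bound in Cannon's growth estimate \eqref{cannon}, applied to balls of varying basepoint via the isometric action of $G$ on $\mathcal{K}$. All remaining steps are routine manipulations with $\delta$-slim triangles and the visual metric.
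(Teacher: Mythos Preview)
Your proposal is correct and follows essentially the same route as the paper. The paper in fact defers (1) entirely to \cite{calegari}, while you outline Coornaert's argument; for (2) your use of \eqref{vm_estimate} together with $\tilde\delta$-slimness of ideal triangles is exactly what the paper does. The only noticeable difference is in (3): the paper splits $d_\varepsilon(\xi,\eta)$ via the triangle inequality through intermediate points $g_\xi\in\mathcal{B}_R(g)\cap\xi_{\id}$ and $g_\eta\in\mathcal{B}_R(g)\cap\eta_{\id}$ and bounds each piece separately, obtaining $C_4=3\lambda e^{2R}$, whereas you argue directly that $d(\id,\gamma_{\xi,\mu})\geq |g|-(R+\tilde\delta)$ and apply \eqref{vm_estimate} once. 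Your version is slightly more streamlined but yields a comparable constant; both are standard.
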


\begin{proof}We present only proofs of (2) and (3), as they are slightly more precisely phrased than in the references. For a proof of (1) we refer the reader to \cite{calegari}. 

\begin{enumerate}

\item[(2)] Let now $\xi,\eta\in \p G$, $r>0$ and let $d_\varepsilon(\xi,\eta)<r$. Denote by $\gamma_{\xi,\eta}$ a geodesic that connects $\xi$ to $\eta$ in $\mathcal{K}$. Then by (\ref{vm_estimate})  $$\lambda^{-1} e^{-\varepsilon d(\gamma_{\xi,\eta},\id)}\leq d_\varepsilon(\xi,\eta)\leq r.$$ Let $g_1\in \xi_{\id}$ with $ \lambda^{-1} e^{-\varepsilon(|g_1|+ \tilde \delta)}\geq r$. Since geodesic triangles in $\overline{\mathcal{K}}$ are $\tilde \delta$-slim it follows that $d(g_1,\eta_{\id})\leq \tilde \delta$. Because $R_0\geq 2 \tilde \delta$, it follows that $\eta\in S(g_1,R)$ for all $R\geq R_0$ so the first inclusion follows.

The proof of the second inclusion is very similar and we only sketch the idea. If $g_2\in \xi_{\id}$ with $|g_2|$ small, then $d(\gamma_{\xi,\eta},\id)\geq |g_2|+ \tilde \delta$ as soon as $d_\varepsilon(\xi,\eta)<r$.

To prove the estimate for the measure of metric balls, let $\xi, r, g_1, g_2$ be as above and assume that $g_1\in \xi_{\id}$ is such that $|g_1|$ is the largest integer satisfying the inequality $\displaystyle r\leq C_2e^{-\varepsilon (|g_1|)}$. Let $N\in \N$ be such that $e^{\varepsilon N}\geq C_2^{-2}$. Then $$r\geq C_2e^{-\varepsilon (|g_1|+1)}\geq C_2^{-1}e^{-\varepsilon N}e^{-\varepsilon (|g_1|+1)}\geq C_2^{-1} e^{-\varepsilon (|g_1|+1+N)}.$$ Choosing $g_2\in \xi_{\id}$ such that $|g_2|=|g_1|+1+N$, we get $\nu(S(g_2,R))\leq \nu(B^\varepsilon_r(\xi))\leq \nu(S(g_1,R))$ by inclusion. Together with (1) this implies that $$C_1^{-1}C_2^{-D}e^{-h(N+1)}r^D\leq C_1^{-1}
e^{-h |g_2|}\leq \nu(B^\varepsilon_r(\xi))\leq C_1e^{-h |g_1|}\leq C_1e^h C_2^{D}r^D.$$

\item[(3)] Let $\xi,\eta \in S(g,R)$ and choose points $g_\xi \in \mathcal{B}_R(g)\cap \xi_{\id}$ and $g_\eta\in \mathcal{B}_R(g)\cap \eta_{\id}$. By the triangle inequality $$d_\varepsilon(\xi,\eta)\leq d_\varepsilon(\xi,g_\xi)+ d_\varepsilon(g_\xi, g_\eta)+d_\varepsilon(g_\eta,\eta).$$ Since $d(g_\xi, g_\eta)\leq 2R$ and $g_\xi, g_\eta\in \mathcal{B}_R(g)$, it follows that $d(\id, \gamma_{g_\xi, g_\eta})\leq |g|-2R$ and by (\ref{vm_estimate}) that $$d_\varepsilon(\xi,\eta)\leq \lambda e^{-\varepsilon |g_\xi|}+\lambda e^{-\varepsilon (|g|-2R)}+ \lambda e^{-\varepsilon |g_\eta|}\leq 3\lambda e^{2R} e^{-\varepsilon|g|}.$$
\end{enumerate}
\end{proof}

\begin{remark} As a corollary of statement (1) in the proposition it is not difficult to obtain a better estimate than (\ref{cannon}) for the size of $n$-balls in $G$, namely, there exists a uniform constant $\tilde C>0$, such that \begin{equation}\label{cannon_improved}\tilde C^{-1}e^{h\cdot n}\leq \#(\mathcal{B}_n)\leq \tilde C e^{h\cdot n}.\end{equation} Moreover, by taking coverings of sets, it is possible to extend the measure estimate for the balls to general measurable sets. Indeed, there exists a constant $C_0$, such that for all $\nu$-measurable sets $A\in \p G$, $$C_0^{-1}H^D(A) \leq \nu(A) \leq C_0H^D(A),$$ where $H^D$ denotes the $D$-dimensional Hausdorff measure. For proofs see \cite{calegari}.

Note also that by proposition \ref{shadow}, there are no isolated points in $\p G$. Moreover, because the action of $G$ on $\p G$ is quasi-conformal it follows that $\p G$ is either a sphere or a Cantor set (see \cite{calegari} for details).
\end{remark}

\subsection{Cones in Cayley graphs} 

This section is devoted to the definition and analysis of cones. These are specific subsets of the group, useful for constructing a topology equivalent to the visual metric in analogy to the cone topology used on riemannian manifolds of negative sectional curvature, such as in \cite{choi}. We define them using the boundary at infinity and develop estimates about their growth and their boundaries. The main lemma of this section shows that cones grow uniformly exponentially.

\begin{definition}\label{cone}
Let $U\subset \p G$ be a set and let $R$ be as in proposition \ref{shadow}. Define the $U$-cone by $$\mathcal{C}_{U}:=\{ g \in G \ |\ S(g,R) \subset U\}.$$ 
\end{definition}
Note that by proposition \ref{shadow}, $\mathcal{C}_U= \varnothing$ when $\mathring{U}=\varnothing$. 
%\begin{definition}\label{cone}
%Let $\xi_0\in \p \mathcal{K}$, let $R$ be as in the definition of a shadow and let $r>0$ be a real number. Define for the metric ball $B^\varepsilon_r(\xi_0)\subset \p \mathcal{K}$ the $B^\varepsilon_r(\xi_0)$-cone by $$\mathcal{C}_{B^\varepsilon_r(\xi_0)}:=\{ g \in \mathcal{K} | S(g,R) \subset B\}.$$ 
%
%We furthermore define the interior of the cone $\mathcal{C}_B$ by $$\text{int}(\mathcal{C}_B):=\{ g \in \mathcal{C}_B | S(gs,R) \subset B \}.$$
%\end{definition}
The cone $\mathcal{C}_{U}$ for a set $U\subset \p G$ is a natural choice of a geometric object, because we can analyse its growth properties by the known properties of a shadow obtained in proposition \ref{shadow}. 

For the remainder of this section, we shall focus on $U$-cones, where $U$ is a metric ball at infinity.

\subsubsection{Truncated cones and the visual metric}
With the minimal Dirichlet problem in mind, we are interested in the behaviour of functions $x:G\to \R$ close to $\p G$. Let $x^n:G\to \R$ be a sequence of functions, for which we would like to analyse convergence near $U\subset \p G$. The following lemma shows that uniform convergence on truncated cones $C_{B^\varepsilon_r(\xi_0)}\backslash \mathcal{B}_n$ corresponds to uniform convergence with respect to (half-)balls $\mathcal{B}^\varepsilon_{\tilde r}(\xi_0)\subset \overline{G}$. 

\begin{lemma}\label{topology} For all $r>0$ small enough, there exists a constant $c>0$, such that for every metric ball at infinity $B^\varepsilon_r(\xi_0)\subset \p G$, there is an $n\in \N$ with 
$$(\mathcal{B}^\varepsilon_{ c^{-1}r}(\xi_0)\cap G)\subset (C_{B^\varepsilon_r(\xi_0)}\backslash \mathcal{B}_n) \subset ( \mathcal{B}^\varepsilon_{ c r}(\xi_0)\cap G).$$ In addition, $n\to \infty$ with $r\to 0$.
\end{lemma}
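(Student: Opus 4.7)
The plan is to prove the two inclusions separately, using the interplay between the visual-metric estimate (\ref{vm_estimate}) and the shadow-diameter estimate from proposition \ref{shadow}(3). The integer $n$ will be chosen of the order $n \sim -\log(r)/\varepsilon$, which explains why $n\to\infty$ as $r\to 0$, and the constant $c>1$ will be chosen large enough to absorb the various multiplicative constants $\lambda$, $C_4$, and $e^{\varepsilon R}$ that appear.

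For the right-hand inclusion $\mathcal{C}_{B^\varepsilon_r(\xi_0)}\setminus \mathcal{B}_n \subset \mathcal{B}^\varepsilon_{cr}(\xi_0)\cap G$: take $g\in G$ with $S(g,R)\subset B^\varepsilon_r(\xi_0)$ and $|g|>n$. The set $S(g,R)$ is nonempty because any geodesic from $\id$ to $g$ can be extended to a ray in the proper metric space $\overline{\mathcal{K}}$, giving a boundary point $\eta \in S(g,0)\subset S(g,R)$. For this $\eta$ one has $d_\varepsilon(\eta,\xi_0)\leq r$, and the key auxiliary bound $d_\varepsilon(g,\eta)\leq c_1 e^{-\varepsilon|g|}$ (proved below) combined with the triangle inequality gives $d_\varepsilon(g,\xi_0) \leq c_1 e^{-\varepsilon n}+r$; choosing $n$ so that $c_1 e^{-\varepsilon n}\leq r$ makes this at most $(1+c_1)\,r \leq cr$.

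For the left-hand inclusion $\mathcal{B}^\varepsilon_{c^{-1}r}(\xi_0)\cap G\subset \mathcal{C}_{B^\varepsilon_r(\xi_0)}\setminus \mathcal{B}_n$: take $g\in G$ with $d_\varepsilon(g,\xi_0)\leq c^{-1}r$. First, since $g$ lies on any geodesic $\gamma_{g,\xi_0}$ we have $d(\id,\gamma_{g,\xi_0})\leq |g|$, so the lower bound in (\ref{vm_estimate}) forces $|g|\geq -\log(\lambda c^{-1} r)/\varepsilon$; for $r$ small enough and $c$ already fixed, this exceeds the chosen $n$. Second, for any $\eta\in S(g,R)$ the triangle inequality gives
$$ d_\varepsilon(\eta,\xi_0)\leq d_\varepsilon(\eta,g)+d_\varepsilon(g,\xi_0)\leq c_1 e^{-\varepsilon|g|}+c^{-1}r\leq (c_1\lambda+1)\,c^{-1}r, $$
which is at most $r$ as soon as $c\geq c_1\lambda+1$. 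Hence $S(g,R)\subset B^\varepsilon_r(\xi_0)$, so $g\in \mathcal{C}_{B^\varepsilon_r(\xi_0)}$.

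The one step that needs care is the estimate $d_\varepsilon(g,\eta)\leq c_1 e^{-\varepsilon|g|}$ for $g\in G$ and $\eta\in S(g,R)$, since here one side of the distance is a vertex and the other a boundary point. I would factor this through a vertex $g'$ on a ray from $\id$ to $\eta$ with $d(g,g')\leq R$, so that $|g'|\geq |g|-R$; for such a $g'$ one directly estimates $d_\varepsilon(g',\eta)$ by integrating $e^{-\varepsilon d(\id,\cdot)}$ along the tail of the ray, yielding $\varepsilon^{-1}e^{-\varepsilon|g'|}$, while $d_\varepsilon(g,g')$ is bounded by summing at most $R$ edge-contributions each of order $e^{-\varepsilon(|g|-R)}$. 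This is the main bookkeeping step; once it is in place, the rest of the argument is routine, and matching the constants across both inclusions amounts to choosing $c$ large and then $n$ large as a function of $r$.
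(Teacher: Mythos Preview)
Your argument is correct and follows essentially the same strategy as the paper's proof: both inclusions are obtained from the estimate $d_\varepsilon(g,\eta)\lesssim e^{-\varepsilon|g|}$ for $\eta\in S(g,R)$ together with the triangle inequality, and both choose $n$ of order $-\varepsilon^{-1}\log r$. The differences are cosmetic. For the right inclusion the paper first invokes proposition~\ref{shadow}(2) to observe that the whole cone already lies outside $\mathcal{B}_n$, whereas you simply use $|g|>n$ from the truncation; for the left inclusion the paper routes the shadow containment through proposition~\ref{shadow}(3) (the diameter bound on $S(g,R)$), while you prove the vertex--to--boundary estimate $d_\varepsilon(g,\eta)\le c_1 e^{-\varepsilon|g|}$ by hand and triangulate through $g$. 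Your version is in fact a bit more explicit. One small imprecision: you cite (\ref{vm_estimate}) for the lower bound $d_\varepsilon(g,\xi_0)\gtrsim e^{-\varepsilon|g|}$, but that inequality is stated only for two boundary points; the bound you need follows directly from the definition of $d_\varepsilon$ (any path from $g$ to $\xi_0$ has $|p(s)|\le |g|+s$, so the integral is at least $\varepsilon^{-1}e^{-\varepsilon|g|}$), which is exactly what the paper uses as well.
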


\begin{proof}
First we show the second inclusion. Let $g\in \mathcal{C}_{B^\varepsilon_r(\xi_0)}$ and observe that it follows by statement (2) of proposition \ref{shadow} and the definition of a cone that $\mathcal{C}_{B^\varepsilon_r(\xi_0)}\cap \mathcal{B}_n= \varnothing$ whenever $C_2e^{-\varepsilon n}\geq r$, and hence $C_{B^\varepsilon_r(\xi_0)}\backslash \mathcal{B}_n =C_{B^\varepsilon_r(\xi_0)}$. Let $n$ be the largest integer that satisfies this inequality. For small enough $r$ we may obviously assume that $C_2e^{-\varepsilon n}\leq 2r$ and it follows by the definition of the visual metric for every $g\in C_{B^\varepsilon_r(\xi_0)}$ that $$d_\varepsilon(g, \xi_0)\leq d_\varepsilon(g, B^\varepsilon_r(\xi_0))+r\leq \varepsilon^{-1}e^{-\varepsilon n_0}+r\leq (2C_2^{-1}\varepsilon^{-1} +1)r.$$

For the first inclusion, let $g\in \mathcal{B}^\varepsilon_{ \tilde r}(\xi_0)\cap G$. Then $\varepsilon^{-1} e^{-\varepsilon|g|}\leq \tilde r$, so it makes sense to define $n$ to be the smallest integer, such that $\varepsilon^{-1} e^{-\varepsilon n}\leq \tilde r$. %, for which we may assume for sufficiently small $\tilde r$ that $\varepsilon^{-1} e^{-\varepsilon n}\geq \tilde r/2$. 
Let $\bar r:=(1+C_4^{-1}\varepsilon)\tilde r$. Then it follows by statement (3) of proposition \ref{shadow} that $S(g,R)\subset B^\varepsilon_{C_4^{-1}\varepsilon \tilde r}(\xi)\subset B^\varepsilon_{\bar r}(\xi_0)$ for every $g\in \mathcal{B}^\varepsilon_{ \tilde r}(\xi_0)\cap G$ and any $\xi \in S(g,R)$. This implies that $g\in \mathcal{C}_{B^\varepsilon_{\bar r}(\xi_0)}\backslash \mathcal{B}_n$.

%define $r_2$ by $$r_2:=\int_{n_0}^\infty e^{-\varepsilon s}ds=\frac{e^{-\varepsilon n_0}}{\varepsilon}.$$ 

\end{proof}

%Define by balls $\mathcal{B}_n(g)\subset \mathcal{K}$ that cover $\mathcal{K}$ and truncated cones $C_{B^\varepsilon_r(\xi_0)}\backslash \mathcal{B}_n$ close to the boundary.

%Let $\xi_0\in \p \mathcal{K}$, let $R$ be as in the definition of a shadow and let $r>0$ be a real number. Define for the metric ball $B^\varepsilon_r(\xi_0)\subset \p \mathcal{K}$ the $B^\varepsilon_r(\xi_0)$-cone by $$\mathcal{C}_{B^\varepsilon_r(\xi_0)}:=\{ g \in \mathcal{K} | S(g,R) \subset B^\varepsilon_r(\xi_0)\}.$$ %$$\mathcal{C}_B:=\{ g \in \mathcal{K} | g \in  U_\xi \text{ for some } \xi\in  B \subset  \p \mathcal{K}\}.$$ 
%Considering the variational problem, we define furthermore the interior of the cone $\mathcal{C}_B$ by $$\mathring{\mathcal{C}}_{B^\varepsilon_r(\xi_0)}:=\{ g \in \mathcal{C}_{B^\varepsilon_r(\xi_0)} | S(gs,R) \subset B^\varepsilon_r(\xi_0) \}.$$

%We shall split $\mathcal{K}$ into two connected disjoint sets: an interior of $\mathcal{C}_B$ and an interior of $\mathcal{K}\backslash \mathcal{C}_B$, and the transition set in between. We need an estimate on the size of each of those, when intersected with an $n$-ball or and $n$-sphere. 

\subsubsection{Growth of cones}

In this section we prove that cones grow exponentially, which can be seen as a refinement of (\ref{cannon_improved}). In fact, apart from the proof of corollary \ref{thm_transition_sets}, we do not use the results from this section in the rest of this paper. Nevertheless, we find them informative and important for the intuition. Furthermore, with proposition \ref{shadow} it is not difficult to see that exponential growth of cones implies exponential growth of shadows, which was first proved by Arzhantseva and Lysenok in \cite{arzhantseva} by a different approach.

In definition \ref{cone} we associate to every $g\in \mathcal{C}_{B^\varepsilon_r(\xi_0)}$ the set of points $S(g,R)\subset B^\varepsilon_r(\xi_0)$. On the other hand, we shall find it useful to associate to every $\xi \in B^\varepsilon_r(\xi_0)$ the set  $$\mathcal{U}_\xi:=\{g \in G \ | \ d(g,\xi_{\id})\leq R\},$$ i.e. the set of such points $ g \in G$ that are close to $\xi_{\id}$.

\begin{lemma}\label{estimate_cone}
Let $C_4$ be as in statement (3) of proposition \ref{shadow}. If $g\in \mathcal{U}_\xi$ for some $\xi\in B^\varepsilon_r(\xi_0)$ with $d_\varepsilon(\xi, \xi_0)< r-C_4 e^{-\varepsilon|g|}$, then $g\in \mathcal{C}_{B^\varepsilon_r(\xi_0)}$.
%Similarly, if $g\in U_\xi$ for some $\xi\in {B^\varepsilon_r(\xi_0)}$ such that $d_\varepsilon(\xi, \xi_0)< r-e^{\varepsilon} C_4 e^{-\varepsilon|g|}$, then $g\in \mathring{\mathcal{C}}_{B^\varepsilon_r(\xi_0)}$.
%\begin{equation}\label{estimate_cone}\{ g \in \mathcal{C}_B \ | \ |g|\geq N_0 \text{ and } g\in U_\xi \text{ for some } \xi \text{ s.t. } d_\varepsilon(\xi, \xi_0)\leq r-4C_2 e^{-\varepsilon|g|} \}\subset\mathring{\mathcal{C}}_B.\end{equation}
\end{lemma}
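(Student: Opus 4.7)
To show $g \in \mathcal{C}_{B^\varepsilon_r(\xi_0)}$ I must verify $S(g,R) \subset B^\varepsilon_r(\xi_0)$, so I fix an arbitrary $\eta \in S(g,R)$ and aim to prove $d_\varepsilon(\eta,\xi_0) \leq r$. The triangle inequality for the visual metric gives
\[
d_\varepsilon(\eta,\xi_0) \leq d_\varepsilon(\eta,\xi) + d_\varepsilon(\xi,\xi_0),
\]
and the hypothesis controls the second summand by $r - C_4 e^{-\varepsilon|g|}$. Hence the whole lemma reduces to establishing the single estimate $d_\varepsilon(\eta,\xi) \leq C_4 e^{-\varepsilon|g|}$ with the \emph{same} constant $C_4$ appearing in statement (3) of Proposition~\ref{shadow}.

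To obtain this estimate I replay the computation used in the proof of Proposition~\ref{shadow}(3). There, the only properties of $\xi$ and $\eta$ that were actually exploited were the existence of points $g_\xi \in \mathcal{B}_R(g) \cap \xi_{\id}$ and $g_\eta \in \mathcal{B}_R(g) \cap \eta_{\id}$; the assumption $\xi \in S(g,R)$ was used solely to produce such a $g_\xi$. In our situation the hypothesis $g \in \mathcal{U}_\xi$ supplies $g_\xi$ directly from the definition $\mathcal{U}_\xi = \{g \in G \mid d(g,\xi_{\id}) \leq R\}$, while $\eta \in S(g,R)$ supplies $g_\eta$ as before. With both points in hand, decomposing
\[
d_\varepsilon(\xi,\eta) \leq d_\varepsilon(\xi,g_\xi) + d_\varepsilon(g_\xi,g_\eta) + d_\varepsilon(g_\eta,\eta)
\]
and applying (\ref{vm_estimate}) together with the observations $|g_\xi|, |g_\eta| \geq |g| - R$ and $d(\id,\gamma_{g_\xi,g_\eta}) \geq |g| - 2R$ reproduces verbatim the bound $d_\varepsilon(\xi,\eta) \leq 3\lambda e^{2R} e^{-\varepsilon|g|} = C_4 e^{-\varepsilon|g|}$.

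Combining this with the triangle inequality gives $d_\varepsilon(\eta,\xi_0) < r$, so $\eta \in B^\varepsilon_r(\xi_0)$, and since $\eta \in S(g,R)$ was arbitrary the conclusion follows. The only point requiring a moment's care is the bookkeeping of the constant: one must check that replacing the hypothesis ``$\xi \in S(g,R)$'' by the weaker ``$g \in \mathcal{U}_\xi$'' does not deteriorate $C_4$, and indeed it does not, because both hypotheses are used only to extract a single witness point within distance $R$ of $g$ on some geodesic ray from $\id$ to the endpoint at infinity. There is no real obstacle here; the lemma is effectively a restatement of Proposition~\ref{shadow}(3) written in the form that will be convenient for controlling cones via the visual metric.
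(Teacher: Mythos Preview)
Your proof is correct and follows essentially the same route as the paper: use Proposition~\ref{shadow}(3) to get $S(g,R)\subset B^\varepsilon_{C_4 e^{-\varepsilon|g|}}(\xi)$, then apply the triangle inequality with the hypothesis $d_\varepsilon(\xi,\xi_0)<r-C_4 e^{-\varepsilon|g|}$ to conclude $S(g,R)\subset B^\varepsilon_r(\xi_0)$. In fact you are more careful than the paper on one point: the paper simply invokes statement~(3) as written, whose hypothesis is $\xi\in S(g,R)$, whereas here one only knows $g\in\mathcal{U}_\xi$; you correctly observe that the proof of (3) only uses a single witness $g_\xi\in\mathcal{B}_R(g)\cap\xi_{\id}$, which the weaker hypothesis $g\in\mathcal{U}_\xi$ already provides, so the constant $C_4$ is unchanged.
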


\begin{proof}
By statement (3) in proposition \ref{shadow} $S(g,R)\subset B_{C_4 e^{-\varepsilon |g|}}^\varepsilon(\xi)$ for every $\xi \in S(g,R)$. So if $g\in \mathcal{U}_\xi$ for a $\xi$ with $d_\varepsilon (\xi, \xi_0)< r- C_4 e^{-\varepsilon|g|}$, then $$S(g,R)\subset B_{C_4 e^{-\varepsilon |g|}}^\varepsilon(\xi) \subset B^\varepsilon_{r}(\xi_0)$$ and so $g\in \mathcal{C}_{B^\varepsilon_r(\xi_0)}$. %(Note that automatically also $C_2e^{-\varepsilon |g|}\leq r$, so $\xi\in B$.)
%Similarly it holds for all $g\in G$ and for all $s \in S$ that $|gs|\geq |g|-1$. Then it holds for every $\xi \in S(gs,R)$ that $S(g,R)\subset B_{e^{\varepsilon} C_4 e^{-\varepsilon |g|}}^\varepsilon(\xi)$ and it is clear that $$\{g \ | \ S(gs,R)\subset B^\varepsilon_r(\xi_0)\}=\mathring{\mathcal{C}}_{B^\varepsilon_r(\xi_0)}.$$ The rest of the proof follows as above.
\end{proof}
 
The next simple fact about coverings will turn out to be helpful when discussing the growth rates of cones. In the following discussion we denote by $I$ a finite index set.

\begin{lemma}\label{covering}
Let $B\subset \p G$ be a set and $r>0$. Let $\{\xi^i\}_{i\in I}\subset B$ denote a maximal set of points, such that $d_\varepsilon(\xi^i,\xi^j)\geq 2r$ for all $i\neq j$. Then $B\subset \bigcup_{i\in I}B^\varepsilon_{2r}(\xi^i)$.
\end{lemma}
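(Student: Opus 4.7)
The plan is to argue by contradiction using the definition of maximality. First I would note that a maximal $2r$-separated subset of $B$ exists and is automatically finite, because $\p G$ is compact under the visual metric (so any $2r$-separated set has cardinality bounded by the doubling/volume estimate in proposition \ref{shadow}); alternatively one invokes Zorn's lemma for the existence, with finiteness coming from compactness.

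Next I would suppose that the conclusion fails, i.e.\ that there exists $\xi \in B$ with $\xi \notin B^\varepsilon_{2r}(\xi^i)$ for every $i \in I$. Unpacking the definition of the closed ball $B^\varepsilon_{2r}(\xi^i)=\{\eta\in \p G \ | \ d_\varepsilon(\eta,\xi^i)\leq 2r\}$, this means that $d_\varepsilon(\xi,\xi^i)>2r$ for all $i\in I$.

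In particular $d_\varepsilon(\xi,\xi^i)\geq 2r$ for all $i\in I$, so appending $\xi$ to the collection $\{\xi^i\}_{i\in I}$ produces a strictly larger subset of $B$ still satisfying the pairwise separation condition $d_\varepsilon(\cdot,\cdot)\geq 2r$. This directly contradicts the maximality of $\{\xi^i\}_{i\in I}$, so every $\xi\in B$ must lie in at least one ball $B^\varepsilon_{2r}(\xi^i)$, establishing the claimed inclusion.

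There is essentially no obstacle here; the result is the standard maximal-separated-set covering principle, and the only subtlety worth mentioning is ensuring the indexing set $I$ can indeed be taken finite, which, as noted, is immediate from the compactness of $(\p G, d_\varepsilon)$ together with the doubling property recorded in proposition \ref{shadow}.
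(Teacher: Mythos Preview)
Your argument is correct and is essentially identical to the paper's proof: both suppose some $\xi\in B$ lies outside every ball $B^\varepsilon_{2r}(\xi^i)$ and observe that adjoining $\xi$ contradicts maximality. Your remarks on existence and finiteness of $I$ are extraneous to the lemma as stated (the maximal set is given as a hypothesis), but they are not wrong.
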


\begin{proof}
If not, there exists an $\eta \in B$ such that $d_\varepsilon(\eta,\xi^i)\geq 2r$ for all $i\in I$, which contradicts the maximality of $\{\xi^i\}_{i\in I}$.
\end{proof}

The following proposition shows that cones grow exponentially.%Another, different proof may be obtained by a similar statement by Arzhantseva and Lysenok in \cite{arzhantseva}, which discusses the growth rate of a shadow.

\begin{proposition}[Growth of cones]\label{goc}
There exists a constant $C_5$, such that for every $\xi_0\in \p G$, $r>0$ and $n\in \N$ satisfying $e^{\varepsilon n}\geq C_4/r$ the following estimate holds $$C_5^{-1}r^De^{\varepsilon n D} \leq \# ({\mathcal{C}}_{B^\varepsilon_r(\xi_0)}\cap \mathcal{S}_n)\leq C_5r^De^{\varepsilon n D} .$$
\end{proposition}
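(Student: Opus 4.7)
The plan is to prove the upper and lower bounds separately, using proposition \ref{shadow} to convert counting on $\mathcal{S}_n$ into $\nu$-measure estimates on $\p G$.

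For the upper bound, I would observe that $\{S(g,R):g\in\mathcal{C}_{B^\varepsilon_r(\xi_0)}\cap\mathcal{S}_n\}$ is a bounded-multiplicity family of subsets of $B^\varepsilon_r(\xi_0)$: each shadow lies in $B^\varepsilon_r(\xi_0)$ by definition of the cone and has $\nu$-mass at least $C_1^{-1}e^{-hn}$ by part (1) of proposition \ref{shadow}. The multiplicity is uniformly bounded because if $\xi\in S(g_1,R)\cap S(g_2,R)$ with $g_i\in\mathcal{S}_n$, then a geodesic ray from $\id$ to $\xi$ passes within $R$ of each $g_i$ at parameters differing by at most $2R$, so $d(g_1,g_2)\leq 4R$; hence each $\xi$ belongs to at most $M:=\#\mathcal{B}_{4R}$ of these shadows. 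Summing $\nu$-measures and using $\nu(B^\varepsilon_r(\xi_0))\leq C_3 r^D$ from part (2) of proposition \ref{shadow} gives
\[
\#(\mathcal{C}_{B^\varepsilon_r(\xi_0)}\cap\mathcal{S}_n)\cdot C_1^{-1}e^{-hn}\leq M\cdot C_3\, r^D,
\]
which, since $hn=\varepsilon nD$, yields the desired upper bound.

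For the lower bound I would use a packing argument. Set $\rho:=C_4 e^{-\varepsilon n}$ (note $\rho\leq r$ by the hypothesis $e^{\varepsilon n}\geq C_4/r$), fix a slightly shrunk radius such as $r':=r-\rho$ (or $r/2$ together with a mild strengthening of the hypothesis) and, using lemma \ref{covering}, choose a maximal $2\rho$-separated subset $\{\xi^i\}_{i\in I}\subset B^\varepsilon_{r'}(\xi_0)$. Its covering balls $B^\varepsilon_{2\rho}(\xi^i)$ cover $B^\varepsilon_{r'}(\xi_0)$, so by part (2) of proposition \ref{shadow},
\[
C_3^{-1}(r')^D\leq \nu(B^\varepsilon_{r'}(\xi_0))\leq |I|\cdot C_3(2\rho)^D,
\]
which gives $|I|\geq c\, r^D e^{\varepsilon n D}$ for a constant $c$ depending only on $C_3$, $C_4$. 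For each $i$ I pick $g^i\in\xi^i_{\id}\cap\mathcal{S}_n$: since the Cayley graph is a geodesic metric space with integer edge lengths, every geodesic ray from $\id$ to $\xi^i$ meets $\mathcal{S}_n$, so this set is non-empty and $g^i\in\mathcal{U}_{\xi^i}$. By lemma \ref{estimate_cone}, the estimate $d_\varepsilon(\xi^i,\xi_0)\leq r'\leq r-C_4 e^{-\varepsilon|g^i|}$ forces $g^i\in\mathcal{C}_{B^\varepsilon_r(\xi_0)}$. Distinctness of the $g^i$ follows from part (3) of proposition \ref{shadow}: if $g^i=g^j$, then $\xi^j\in S(g^i,R)\subset B^\varepsilon_{C_4 e^{-\varepsilon n}}(\xi^i)=B^\varepsilon_\rho(\xi^i)$, contradicting $d_\varepsilon(\xi^i,\xi^j)\geq 2\rho$.

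The bookkeeping is the only real obstacle: one must ensure that both the multiplicity constant $M$ in the upper bound and the packing constant $c$ in the lower bound depend only on the hyperbolicity data ($\delta,\tilde\delta,R,\varepsilon,C_1,C_3,C_4$), and that the hypothesis $e^{\varepsilon n}\geq C_4/r$ is strong enough to make the shrinkage $r\mapsto r'$ lose at most a fixed factor; minor adjustments of the constant $C_4/r$ in the hypothesis handle this. The geometric content — that shadows of $\mathcal{S}_n$ overlap boundedly, and that the doubling estimate of proposition \ref{shadow}(2) can be inverted via a separated packing — is the substantive point and both parts are immediate from the tools already assembled.
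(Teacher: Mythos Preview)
Your proof is correct. The lower bound is essentially the paper's argument: both pick a maximal $2s_n$-separated set in a slightly shrunk ball at infinity (the paper uses $B^\varepsilon_{r-2s_n}(\xi_0)$, you use $B^\varepsilon_{r-\rho}(\xi_0)$ with $\rho=s_n=C_4e^{-\varepsilon n}$), bound its cardinality from below via the measure estimate in proposition~\ref{shadow}(2), push each $\xi^i$ to a point $g^i\in\xi^i_{\id}\cap\mathcal{S}_n$, invoke lemma~\ref{estimate_cone} to land in the cone, and use proposition~\ref{shadow}(3) to see the $g^i$ are distinct.

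Your upper bound, however, takes a genuinely different route. The paper stays with the same separated family $\{\xi^i\}$, drops to a lower sphere $\mathcal{S}_{n-N_1}$ so that the shadows $S(g_i,R)$ of the associated points \emph{cover} the ball at infinity, and then argues that every cone point on that sphere lies in some $\mathcal{B}_{2R}(g_i)$; a final shift by $N_1$ returns to $\mathcal{S}_n$. You instead sum the $\nu$-measures of the shadows $S(g,R)$ directly over $g\in\mathcal{C}_{B^\varepsilon_r(\xi_0)}\cap\mathcal{S}_n$, observe that the multiplicity of this family is at most $\#\mathcal{B}_{4R}$ (two points of $\mathcal{S}_n$ whose shadows share a boundary point must be $4R$-close), and bound the total mass by $\nu(B^\varepsilon_r(\xi_0))$. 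This bypasses the auxiliary scale $n-N_1$ and the covering step entirely, and is arguably cleaner; the paper's approach has the virtue of reusing the single packing $\{\xi^i\}$ for both directions, keeping the two halves of the proof symmetric. Either way the constants depend only on $R$, $C_1$, $C_3$, $C_4$, as you note.
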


\begin{proof}
Let $s_n:=C_4 e^{-\varepsilon n}$ and $N_0\in \N$ be such that  $4s_{N_0}\leq r$. For all $n\geq N_0$, let $\{\xi_i\}_{i\in I}\subset B^\varepsilon_{(r-2s_n)}(\xi_0)$ be a maximal collection of points such that $d_\varepsilon(\xi_i, \xi_j)\geq 2 s_n$ for every $i\neq j$. By lemma \ref{covering}, $$\bigsqcup_{i\in I}B^\varepsilon_{s_n}(\xi_i)\subset B^\varepsilon_{(r-s_n)}(\xi_0) \ \text{ and }\ B^\varepsilon_{(r-2s_n)}(\xi_0)\subset \bigcup_{i \in I}B^\varepsilon_{2s_n}(\xi_i).$$
It then easily follows from statement (2) of proposition \ref{shadow} that \begin{equation}\label{goc1}C_3^{-2}\left(\frac{r}{4}\right)^Ds_n^{-D}\leq \# I\leq C_3^{2}r^Ds_n^{-D}.\end{equation}

On one hand, $\# I$ is the number of $s_n$-balls that fit into $B^\varepsilon_{(r-s_n)}$ without intersecting and can be used to estimate $\# ({\mathcal{C}}_{B^\varepsilon_r(\xi_0)}\cap \mathcal{S}_n)$ from below. Indeed, for $\xi, \tilde \xi \in B^\varepsilon_{(r-2s_n)}(\xi_0)$ with $d_\varepsilon(\xi,\tilde \xi)\geq 2s_n$, there are points $g\in \xi_{\id}$ and $\tilde g \in \tilde \xi_{\id}$ with $g, \tilde g \in ({\mathcal{C}}_{B^\varepsilon_r(\xi_0)}\cap \mathcal{S}_n)$, which define shadows such that $\xi \in S(g,R)$ and $\tilde \xi \in S(\tilde g,R)$. By lemma \ref{estimate_cone}, these shadows are contained in $B^\varepsilon_{(r-s_n)}(\xi_0)$ and by proposition \ref{shadow} $S(g,R)\cap S(\tilde g, R)= \varnothing$. In particular, $g\neq \tilde g$. Hence, for all $\xi^i$ with $i\in I$ there exists a $g_i\in \xi^i_{\id}\cap \mathcal{S}_n$ such that $g_i\neq g_j$ for $i\neq j$, so $$C_3^{-2}\left(\frac{r}{4}\right)^Ds_n^{-D} \leq \#I\leq \# ({\mathcal{C}}_{B^\varepsilon_r(\xi_0)}\cap \mathcal{S}_n).$$ 

On the other hand, we can use the fact that balls $B_{2s_n}^\varepsilon(\xi^i)$ cover $B^\varepsilon_{(r-2s_n)}(\xi_0)$, which gives an upper bound on the number of points in $S_{n-N_1}$, for a large enough $N_1$. To prove this, let $N_1$ be such that $e^{\varepsilon N_1}>2C_2^{-1}C_4$ and choose for every $i\in I$ a point $g_i\in \xi^i_{\id} \cap S_{n-N_1}$. It follows by statement (2) of proposition \ref{shadow} that $$B_{2s_n}^\varepsilon(\xi^i)\subset B_{C_2e^{-\varepsilon (n-N_1)}}^\varepsilon(\xi^i) \subset S(g_i,R),$$ which implies that the shadows $S(g_i,R)$ also cover $B^\varepsilon_{(r-2s_n)}(\xi_0)$. The set $({\mathcal{C}}_{B^\varepsilon_r(\xi_0)}\cap S_{n-N_1})$ is covered by $\mathcal{B}_{2R}(g_i)$, because every point $g\in ({\mathcal{C}}_{B^\varepsilon_r(\xi_0)}\cap S_{n-N_1})$ is at most within distance $R$ of a geodesic ray. So, using (\ref{goc1}), it follows that $$\#({\mathcal{C}}_{B^\varepsilon_{(r-s_n)}(\xi_0)}\cap S_{n-N_1}) \leq \#\mathcal{B}_{2R}(\id)\cdot \# I \leq \#\mathcal{B}_{2R}(\id)\cdot C_3^{-2}r^Ds_n^{-D}.$$ Since $n$ was chosen large enough so that $r\geq s_n$ and by the definition of $N_1$, it follows that $$ \#({\mathcal{C}}_{B^\varepsilon_r(\xi_0)}\cap \mathcal{S}_n)\leq  \#\mathcal{B}_{2R}(\id)\cdot C_3^{-2}(2r)^De^{N_1D}s_n^{-D}.$$
\end{proof}

\subsubsection{Boundaries of cones} \label{sep_sets}

We prove a somewhat technical statement about estimating the boundaries of cones with annuli at infinity. Let $r>0$ be small enough, so that $\p G\backslash {B^\varepsilon_r(\xi_0)}\neq \varnothing$. %and look at the cone %$$\begin{aligned} 
%$\mathcal{C}_{\p \mathcal{K}\backslash{B^\varepsilon_r(\xi_0)}}=\{g \in \mathcal{K} \ |\ S(g,R) \subset \p \mathcal{K}\backslash {B^\varepsilon_r(\xi_0)}\}\ .$%,\\ \text{int}(\mathcal{C}_{\p \mathcal{K}\backslash {B^\varepsilon_r(\xi_0)}})&:=\{ g \in \mathcal{K}\ |\ S(gs,R) \subset \p \mathcal{K}\backslash {B^\varepsilon_r(\xi_0)} \}\end{aligned}.$$

\begin{definition}For $0<r\leq \bar r$ we call a set $\mathcal{A}\subset G \backslash \mathcal{B}_n$ a {\it separating set for $\mathcal{C}_{B^\varepsilon_r(\xi_0)}$ and $G\backslash \mathcal{C}_{B^\varepsilon_{\bar r}(\xi_0)}$ outside $\mathcal{B}_n$}, if the following conditions are met. For every point $g\in \mathcal{A}$ and all $s\in S$ it holds that $gs\notin \mathcal{C}_{B^\varepsilon_r(\xi_0)}\cup (G\backslash \mathcal{C}_{B^\varepsilon_{\bar r}(\xi_0)})$.  
Moreover, every path $p_{g,\tilde g}\subset \mathcal{K}\backslash \mathcal{B}_n$ with $g\in \mathcal{C}_{B^\varepsilon_r(\xi_0)}$ and $\tilde g\in G \backslash \mathcal{C}_{B^\varepsilon_{\bar r}(\xi_0)}$ intersects $\mathcal{A}$.
\end{definition}

Separating sets may be constructed from annuli $ A_r^t(\xi_0):=\overline{(B_{r+t}(\xi_0)\backslash B_{r-t}(\xi_0))}\subset \p G$. 

\begin{lemma}\label{separating_sets}
Let us define for every $n\in \N$ and $t_n:= \min\{e^\varepsilon C_4,e^{\varepsilon(2R+\tilde \delta)}C_2\}\cdot e^{-\varepsilon n}$ the set $$\mathcal{A}_{r,t_n}:=\{g\in \mathcal{U}_\xi \backslash \mathcal{B}_n\ | \ \xi\in A_{r+2t_n}^{t_n}(\xi_0)\}.$$ Then $\mathcal{A}_{r,t_n}$ is a separating set for $\mathcal{C}_{B^\varepsilon_r(\xi_0)}$ and $G\backslash \mathcal{C}_{B^\varepsilon_{r+4t_n}(\xi_0)}$ outside $\mathcal{B}_n$.
\end{lemma}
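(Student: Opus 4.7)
The plan is to verify the two defining properties of a separating set in turn, both by converting cone membership into visual-distance estimates via Lemma \ref{estimate_cone} and Proposition \ref{shadow}(3). The width $2t_n$ of the annulus $A^{t_n}_{r+2t_n}(\xi_0)$ is engineered so that at depth $|g|\geq n$ neither the visual diameter of a single shadow $S(g,R)$ nor the visual displacement of shadows between two adjacent group elements can cross it; the two terms in the $\min$ defining $t_n$ correspond exactly to these two effects.

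\textbf{Neighbour condition.} Fix $g\in\mathcal{A}_{r,t_n}$ with witness $\xi\in A^{t_n}_{r+2t_n}(\xi_0)$ and $s\in S$. Since $|gs|\geq n$, Proposition \ref{shadow}(3) bounds the visual diameter of $S(gs,R)$ by $2C_4 e^{-\varepsilon|gs|}\leq t_n$, absorbed by the $C_4$-term of $t_n$. I pick $\xi'\in S(gs,R)$ as the ideal endpoint of an extension of a geodesic $\gamma_{\id,gs}$ to infinity, so that $gs\in\mathcal{U}_{\xi'}$. A thin-triangle comparison in $\overline{\mathcal{K}}$ of the two rays $\gamma_{\id,\xi}$ (within $R$ of $g$) and $\gamma_{\id,\xi'}$ (through $gs$), which must stay within $2R+\tilde\delta$ of each other up to word length $\approx|g|$, combined with (\ref{vm_estimate}), then yields $d_\varepsilon(\xi,\xi')\leq t_n$ via the $C_2$-term of $t_n$. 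The triangle inequality together with $d_\varepsilon(\xi,\xi_0)\in[r+t_n,r+3t_n]$ gives
\begin{equation*}
d_\varepsilon(\eta,\xi_0)\in(r,\,r+4t_n)\quad\text{for every }\eta\in S(gs,R),
\end{equation*}
which is exactly $gs\in\mathcal{C}_{B^\varepsilon_{r+4t_n}(\xi_0)}\setminus\mathcal{C}_{B^\varepsilon_r(\xi_0)}$.

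\textbf{Separating condition.} Given a path $p_{g,\tilde g}\subset\mathcal{K}\setminus\mathcal{B}_n$ joining $g\in\mathcal{C}_{B^\varepsilon_r(\xi_0)}$ to $\tilde g\in G\setminus\mathcal{C}_{B^\varepsilon_{r+4t_n}(\xi_0)}$, I parametrise by its vertex sequence $g=g_0,g_1,\dots,g_N=\tilde g$ with $d(g_i,g_{i+1})=1$ and $|g_i|>n$, and assign to each $g_i$ an ideal endpoint $\eta_i\in\p G$ of an extension of some geodesic from $\id$ through $g_i$; thus $g_i\in\mathcal{U}_{\eta_i}$ and $\eta_i\in S(g_i,R)$. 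From $g_0\in\mathcal{C}_{B^\varepsilon_r(\xi_0)}$ I obtain $d_\varepsilon(\xi_0,\eta_0)\leq r<r+t_n$, whereas the assumption $\tilde g\notin\mathcal{C}_{B^\varepsilon_{r+4t_n}(\xi_0)}$ produces some $\eta^\ast\in S(\tilde g,R)$ with $d_\varepsilon(\xi_0,\eta^\ast)>r+4t_n$, and the shadow-diameter bound of the previous step transfers this to $d_\varepsilon(\xi_0,\eta_N)>r+3t_n$. The very one-step visual estimate of the neighbour step yields $|d_\varepsilon(\xi_0,\eta_i)-d_\varepsilon(\xi_0,\eta_{i+1})|\leq t_n$ between consecutive vertices, and a discrete intermediate-value argument forces some $i$ with $d_\varepsilon(\xi_0,\eta_i)\in[r+t_n,r+3t_n]$; for this $i$ one has $\eta_i\in A^{t_n}_{r+2t_n}(\xi_0)$, hence $g_i\in\mathcal{A}_{r,t_n}$.

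The main obstacle is the careful bookkeeping of constants. The prescribed $t_n=\min\{e^\varepsilon C_4,\, e^{\varepsilon(2R+\tilde\delta)} C_2\}\cdot e^{-\varepsilon n}$ must simultaneously dominate the intrinsic shadow diameter at level $n$ (giving the $C_4$ factor via Proposition \ref{shadow}(3)) and the visual shift produced by a single edge traversal at depth $n$ (giving the $C_2 e^{\varepsilon(2R+\tilde\delta)}$ factor via (\ref{vm_estimate}), Proposition \ref{shadow}(2), and the $\tilde\delta$-slim geometry of $\overline{\mathcal{K}}$ together with the $2R$-wide channel in which the two relevant rays stay close). Matching both contributions simultaneously is exactly what justifies the factor $4$ appearing in $r+4t_n$ and the annulus half-width $t_n$, and is the only place where the explicit form of $t_n$ is needed.
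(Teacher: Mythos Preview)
Your proof is correct and follows essentially the same route as the paper: both conditions are reduced to visual-distance estimates for shadows at depth $\geq n$, with Proposition~\ref{shadow}(3) controlling the intrinsic shadow diameter and a thin-triangle argument in $\overline{\mathcal{K}}$ (together with (\ref{vm_estimate}) / Proposition~\ref{shadow}(2)) controlling the one-step displacement between $\xi$-witnesses of adjacent vertices; the separating condition is then a discrete intermediate-value argument across the annulus of width $2t_n$. The only difference is cosmetic: the paper treats the neighbour condition more tersely (citing Proposition~\ref{shadow}(3) directly) and defers the explicit one-step estimate $d_\varepsilon(\xi,\eta)\leq e^{\varepsilon(2R+\tilde\delta)}C_2 e^{-\varepsilon n}$ to the separating paragraph, whereas you invoke both mechanisms already in the neighbour step, which makes the role of the two terms in the $\min$ defining $t_n$ more transparent.
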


\begin{proof}
Note that if $\xi\notin B_r^\varepsilon(\xi_0)$ and $g\in \mathcal{U}_\xi$, then $S(g,R)\nsubseteq B_r^\varepsilon(\xi_0)$, so $g\notin \mathcal{C}_{B^\varepsilon_{r}(\xi_0)}$ and similarly for $G\backslash \mathcal{C}_{B^\varepsilon_{r+4t_n}(\xi_0)}$. Moreover, if $\xi\in A_{r+2t_n}^{t_n}(\xi_0)$ it follows by the choice of the constant $t_n$ and by statement (3) of proposition \ref{shadow} that $gs\notin \mathcal{C}_{B^\varepsilon_r(\xi_0)}\cup (G\backslash \mathcal{C}_{B^\varepsilon_{\bar r}(\xi_0)})$ for any $s\in S$. 

Next, we need to show that the set $\mathcal{A}_{r,t_n}$ separates path-connected sets. Since for every $g\in G$ with $|g|> n$ and $s\in S$ the shadows $S(g,R)$ and $S(gs,R)$ are nonempty, there exist rays $\xi,\eta \in \p G$ with $g\in \mathcal{U}_\xi$ and $gs \in \mathcal{U}_\eta$. Because $d(g,gs)=1$, it follows by hyperbolicity that $d(\xi\cap \mathcal{S}_n,\eta\cap \mathcal{S}_n)\leq 2R+\tilde \delta$ and, furthermore, that $d(\xi\cap S_{n-2R-\tilde \delta},\eta\cap S_{n-2R-\tilde \delta})\leq \tilde \delta$, so there exists a $\bar g\in G$ with $|\bar g|\geq n-2R-\tilde \delta$ and $\xi,\eta\in S(\bar g,R)$. It follows from statement (2) of proposition \ref{shadow} that $d^\varepsilon(\xi,\eta)\leq e^{\varepsilon(2R+\tilde \delta)}C_2e^{-\varepsilon n}$. 

Now since $A_{r+2t_n}^{t_n}(\xi_0)$ separates $\p G$ into disjoint sets $B_{r}^\varepsilon(\xi_0)$ and $\p G\backslash B_{r+4t_n}^\varepsilon(\xi_0)$ and because $e^{\varepsilon(2R+\tilde \delta)}C_2e^{-\varepsilon n} < 2t_n$, all paths $p_{g,\tilde g}\subset \mathcal{K}\backslash \mathcal{B}_n$ from $g\in \mathcal{C}_{B^\varepsilon_r(\xi_0)}$ to $\tilde g\in G\backslash \mathcal{C}_{B^\varepsilon_{\bar r}(\xi_0)}$, have to intersect $\mathcal{A}_{r,t_n}$.
\end{proof}

This completes the assembly of tools from geometric group theory that we need.

\section{The variational problem}\label{var_prob}

As explained in the introduction, we are interested in solutions $x:G\to\R$ of the discrete Allen-Cahn equation $$ \rho(\Delta x)_g-V'(x_g)=0 , \text{ for all } \ g\in G ,$$ where $V:\R\to \R$ is a double-well potential and $\rho\geq 0$ is a small constant. More precisely, we assume that $V$ is a Morse function with two absolute minima $c_0$ and $c_1$, i.e. $V(c_0)=V(c_1)\leq V(s)$ for all $s\in \R$. Moreover, we assume that $c_0$ and $c_1$ are the only absolute minima of $V$ in the interval $[c_0,c_1]$.

The equation (\ref{rr}) comes with a variational structure, which we explain below. It follows from theorem \ref{ac_theorem} that the equation (\ref{rr}) has many solutions, but we are interested only in solutions that minimise the action globally.

\subsection{Minimal solutions}

The variational structure that the problem (\ref{rr}) carries is the following. For any compact (i.e. finite) set $\mathcal{B} \subset G$ and function $x:G\to \R$, we define the ``action functional'' \begin{equation}\label{vp}W^\rho_\mathcal{B}(x):=\sum_{g\in \mathcal{B}}\sum_{s\in S}\left(\frac{\rho}{4}(x_{gs}-x_g)^2+V(x_g)\right).\end{equation} It is easy to see that for every $\rho>0$ the function $W^\rho_\mathcal{B}(x)$ is a function of variables $x_g$ where $g\in \mathcal{B}^{\out}$ and that $x$ is a solution of (\ref{rr}), if for every compact $\mathcal{B}\subset G$ and every perturbation $v$ supported on $\mathcal{B}^{\inn}$, $$\left.\frac{d}{ds}\right|_{s=0} W^\rho_B(x+sv)=0.$$ %More precisely, if $v=\delta_g$, it holds that  $$\left.\frac{d}{ds}\right|_{s=0} W^\rho_B(x+sv)=\sum _{s\in S}\Delta_g(x)-V'(x_g).$$
This motivates the following definition.

\begin{definition}\label{def_gm}For any set $\mathcal{B}\subset G$, a function $x:G \to \R$ is called {\it a minimiser on $\mathcal{B}$}, if $$ W^\rho_\mathcal{B}(x+v)-W^\rho_{\mathcal{B}}(x)\geq 0$$ for all $v:G \to \R$ with compact support in ${\mathcal{B}}^{\inn}$. 

This definition makes sense also for infinite sets $\mathcal{B}$, because the support of $v$ is compact, and one may evaluate the difference above by truncating the actions to a bounded set containing the support of $v$ in its interior.
The function $x$ is called a {\it global minimiser}, if $x$ is a minimiser on $G$.
\end{definition}

Observe that with this definition, a minimiser $x$ on a compact set $\mathcal{B}$ is a solution to the Dirichlet problem given by (\ref{rr}) on ${\mathcal{B}^{\inn}}$, with given boundary values $x|_{\p^{\f}\mathcal{B}}$ on $\p^{\f}\mathcal{B}=\p^{\out}\mathcal{B}\cup \p^{\inn}\mathcal{B}=\mathcal{B}^{\out}\backslash \mathcal{B}^{\inn}$.

\begin{remark}\label{xc01}It is easy to see that the constant function $x^{c_0}\equiv c_0$ and $x^{c_1}\equiv c_1$ are global minimisers of (\ref{vp}).
\end{remark}

Obviously, all global minimisers are solutions and it is clear that one may construct global minimisers as limits of minimisers on compact domains that exhaust $G$. More precisely, if $\mathcal{B}^n$ is a sequence of compact subsets exhausting $G$ and $x^n$ a sequence of minimisers on $\mathcal{B}^n$ which converges to a function $x^\infty$, then $x^\infty$ is a global minimiser.

The following statements are standard for elliptic difference operators and we provide them for the sake of completeness (see also \cite{candel-llave} or \cite{llave-lattices}).

\begin{lemma}\label{mm_lemma}
For functions $x,y$, their point-wise minimum and maximum $m:=\min\{x,y\}$ and $M:=\max\{x,y\}$, and for any compact domain $\mathcal{B}\subset G$ it holds that $$W^\rho_{\mathcal{B}}(x)+W^\rho_{\mathcal{B}}(y)\geq W^\rho_{\mathcal{B}}(M)+W^\rho_{\mathcal{B}}(m).$$
\end{lemma}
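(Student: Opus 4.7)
The plan is to prove the inequality term-by-term, splitting the action $W^\rho_\mathcal{B}$ into its ``potential'' part and its ``gradient'' (discrete Dirichlet) part. The potential part will turn out to be an equality, so all the content lies in the gradient part, where we verify the inequality edge-by-edge.

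First I would observe that at each $g \in \mathcal{B}$ the unordered pair $\{x_g,y_g\}$ coincides with $\{M_g, m_g\}$. Consequently
\[
\sum_{g\in\mathcal{B}}\sum_{s\in S}\bigl(V(x_g)+V(y_g)\bigr)=\sum_{g\in\mathcal{B}}\sum_{s\in S}\bigl(V(M_g)+V(m_g)\bigr),
\]
so the potential contribution to $W^\rho_{\mathcal{B}}(x)+W^\rho_{\mathcal{B}}(y)-W^\rho_{\mathcal{B}}(M)-W^\rho_{\mathcal{B}}(m)$ vanishes identically and only the quadratic ``gradient'' part survives.

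Next I would reduce the gradient part to a pointwise inequality on each edge $(g,gs)$. Writing $a=x_g$, $b=x_{gs}$, $c=y_g$, $d=y_{gs}$, the claim becomes
\[
(b-a)^2+(d-c)^2\;\ge\;\bigl(\max(b,d)-\max(a,c)\bigr)^2+\bigl(\min(b,d)-\min(a,c)\bigr)^2.
\]
If $a$ and $c$ are ordered the same way as $b$ and $d$ (both pairs increasing or both decreasing along the edge), the two sides coincide and we have equality. Otherwise, say $a\le c$ and $b\ge d$; a short expansion gives
\[
\text{LHS}-\text{RHS}=2(c-a)(b-d)\ge 0,
\]
with the symmetric case handled identically. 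Summing the resulting nonnegative edge contributions, weighted by $\rho/4$, over all $g\in\mathcal{B}$ and $s\in S$ gives the desired inequality.

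The ``main obstacle'' is really just bookkeeping: one must make sure that in the finite sum defining $W^\rho_\mathcal{B}$, every pair of neighbouring vertices appearing in the quadratic term is handled consistently (if $(g,gs)$ and $(gs,g)$ both occur, one still gets the same nonnegative edge contribution, only with a possibly different multiplicity, which is harmless since symmetry of $S$ just multiplies both sides by the same factor). There is no analytic subtlety — it is a purely algebraic rearrangement lemma for sums of squares, combined with the trivial multiset identity $\{x_g,y_g\}=\{M_g,m_g\}$ controlling the potential.
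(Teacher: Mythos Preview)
Your proof is correct, but it differs from the paper's argument. You split $W^\rho_\mathcal{B}$ into potential and gradient parts and check the inequality edge by edge via the elementary algebraic identity
\[
(b-a)^2+(d-c)^2-\bigl(\max(b,d)-\max(a,c)\bigr)^2-\bigl(\min(b,d)-\min(a,c)\bigr)^2=2(c-a)(b-d)\ge 0
\]
in the only nontrivial case. The paper instead writes $\alpha:=M-x\ge 0$, $\beta:=m-x\le 0$ with disjoint supports and $y=x+\alpha+\beta$, and expresses $W^\rho_\mathcal{B}(y)-W^\rho_\mathcal{B}(M)-W^\rho_\mathcal{B}(m)+W^\rho_\mathcal{B}(x)$ as the double integral $\int_0^1\!\int_0^1\sum_{g,\tilde g}\partial_g\partial_{\tilde g}W^\rho_\mathcal{B}(x+t\alpha+\tilde t\beta)\,\alpha_g\beta_{\tilde g}\,d\tilde t\,dt$, which is nonnegative because only the off-diagonal Hessian entries $\partial_g\partial_{gs}W^\rho_\mathcal{B}\le -\rho/2$ survive and $\alpha_g\beta_{gs}\le 0$. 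Your route is more elementary and uses nothing about $V$ beyond the multiset identity $\{x_g,y_g\}=\{M_g,m_g\}$; the paper's route is a structural submodularity argument that would apply verbatim to any action with negative mixed second partials, not just the specific quadratic Dirichlet term.
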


\begin{proof}
Write $\alpha:=M-x$ and $\beta:=m-x$ and observe that $\alpha \geq0$, $\beta\leq0$, while $\mbox{supp}(\alpha) \cap \mbox{supp}(\beta) = \emptyset$ and $y=M+m-x=\alpha + m=\alpha + \beta + x$. Rewriting the inequality above as $$W^\rho_{\mathcal{B}}(x+\alpha+\beta)-W^\rho_{\mathcal{B}}(x+\alpha)-W^\rho_{\mathcal{B}}(x+\beta)+W^\rho_{\mathcal{B}}(x) \geq 0,$$
we can write its left-hand side in an integral form 
 $$\int_0^1\int_0^1\frac{\p^2}{\p t\p s} W^\rho_{\mathcal{B}}(x+\alpha t+\beta \tilde t)\,d\tilde t\, dt =  \int_0^1\int_0^1\sum_{g,\tilde g\in G}\frac{\p^2}{\p_g\p_{\tilde g}} W^\rho_{\mathcal{B}}(x+\alpha t+\beta \tilde t)\alpha_g\beta_{\tilde g} \ d\tilde t \,dt\ .$$
Since $\mbox{supp}(\alpha) \cap \mbox{supp}(\beta) = \emptyset$, we have that $\alpha_g\beta_g=0$ for all $g$, and hence only mixed derivatives with $\tilde g=gs$ remain. Since $$\frac{\p^2}{\p_g\p_{gs}} W^\rho_{\mathcal{B}}(x)\leq -\frac{\rho}{2}$$ for all $g\in {\mathcal{B}^{\inn}}$ and $\alpha_g \beta_{gs}\leq 0$, the claim follows.
\end{proof}

\begin{lemma}\label{comparison}
Let $x,y$ be two minimisers on $\mathcal{B}$ with $x_g\leq y_g$ for all $g\in \p^{\f}\mathcal{B}$. Then either $x_g<y_g$ for all $g\in \mathcal{B}^{\inn}$, or $x|_{\mathcal{B}}\equiv y|_{\mathcal{B}}$.

In particular, any two global minimisers $x,y$ with $x_g\leq y_g$ for all $g\in G$ and such that $x\neq y$, are totally ordered: $x_g<y_g$ for all $g\in G$.
\end{lemma}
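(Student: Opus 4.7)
The plan is a two-step maximum principle. First I would use Lemma~\ref{mm_lemma} together with the minimality of $x$ and $y$ to promote $m:=\min\{x,y\}$ and $M:=\max\{x,y\}$ to \emph{bona fide} minimisers on $\mathcal{B}$, and then invoke the strong maximum principle for the nonlinear discrete Laplace--Allen--Cahn operator to propagate the coincidence $x_{g_0}=y_{g_0}$ along neighbouring vertices.

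\textbf{Symmetrisation.} The hypothesis $x\leq y$ on $\p^{\f}\mathcal{B}$ forces $\{x>y\}\subset \mathcal{B}^{\inn}$; for finite $\mathcal{B}$ this makes $m-x$ and $M-y$ admissible variations (the general case follows by restricting to a finite exhaustion). Minimality of $x$ and $y$ gives $W^\rho_\mathcal{B}(m)\geq W^\rho_\mathcal{B}(x)$ and $W^\rho_\mathcal{B}(M)\geq W^\rho_\mathcal{B}(y)$, while Lemma~\ref{mm_lemma} supplies the reverse inequality $W^\rho_\mathcal{B}(x)+W^\rho_\mathcal{B}(y)\geq W^\rho_\mathcal{B}(m)+W^\rho_\mathcal{B}(M)$, so in fact $W^\rho_\mathcal{B}(m)=W^\rho_\mathcal{B}(x)$ and $W^\rho_\mathcal{B}(M)=W^\rho_\mathcal{B}(y)$. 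Writing $m+w=x+\bigl((m-x)+w\bigr)$ for any admissible variation $w$ then shows $W^\rho_\mathcal{B}(m+w)\geq W^\rho_\mathcal{B}(x)=W^\rho_\mathcal{B}(m)$, i.e.\ $m$ is itself a minimiser on $\mathcal{B}$; symmetrically for $M$. In particular, both $m$ and $M$ solve the Allen--Cahn equation on $\mathcal{B}^{\inn}$.

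\textbf{Propagation.} Assume $x_{g_0}=y_{g_0}$ at some $g_0\in \mathcal{B}^{\inn}$, so that also $m_{g_0}=x_{g_0}=y_{g_0}$. Since $m$ and $y$ both solve the Allen--Cahn equation at $g_0$ and agree there, $V'(m_{g_0})=V'(y_{g_0})$ gives $(\Delta m)_{g_0}=(\Delta y)_{g_0}$, which after cancelling the equal central terms reduces to $\sum_{s\in S}(y_{g_0 s}-m_{g_0 s})=0$. Each summand is $\geq 0$ since $m\leq y$, hence each vanishes: $m_{g_0 s}=y_{g_0 s}$ for every $s\in S$. Running the same argument for the ordered pair $m\leq x$ yields $m_{g_0 s}=x_{g_0 s}$ as well, so $x_{g_0 s}=y_{g_0 s}$ at every neighbour of $g_0$. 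Iterating through neighbours still in $\mathcal{B}^{\inn}$, and noting that a neighbour in $\p^{\inn}\mathcal{B}\subset \p^{\f}\mathcal{B}$ already satisfies $x\leq y$ (and is forced to equality by the propagated inequality), we obtain $x\equiv y$ on the connected component of $g_0$ in $\mathcal{B}$. For the connected $\mathcal{B}$'s relevant in the applications (e.g.\ balls $\mathcal{B}_n$) this yields the stated dichotomy.

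The ``in particular'' clause follows by exhaustion: if $x,y$ are global minimisers with $x\leq y$ but $x_{g_0}<y_{g_0}$ at some $g_0$, apply the first part to $\mathcal{B}=\mathcal{B}_n$ for each $n$ large enough that both $g_0$ and a prescribed $g\in G$ lie in $\mathcal{B}_n^{\inn}$; since the equal alternative is ruled out at $g_0$, one obtains $x<y$ throughout $\mathcal{B}_n^{\inn}$, and letting $n\to\infty$ gives $x<y$ on $G$. The main technical point is the symmetrisation step: one must not merely record the action equality $W^\rho_\mathcal{B}(m)=W^\rho_\mathcal{B}(x)$ but actually certify $m$ as a minimiser, since this is what allows the strong maximum principle to be applied to the \emph{ordered} pairs $(m,x)$ and $(m,y)$; the discrete maximum-principle computation itself is then a one-line cancellation of the Allen--Cahn equations.
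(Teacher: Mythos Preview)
Your proof is correct and follows essentially the same route as the paper's: first promote $m$ and $M$ to minimisers via Lemma~\ref{mm_lemma}, then run the strong maximum principle by subtracting the Allen--Cahn equations at a coincidence point. The only cosmetic difference is that the paper compares the single ordered pair $(m,M)$ directly (since $M_{g_0}=m_{g_0}$ forces $\Delta_{g_0}M=\Delta_{g_0}m$ and hence $M_{gs}=m_{gs}$ for all $s$), whereas you work with the two pairs $(m,y)$ and $(m,x)$; your explicit treatment of the symmetrisation step and of the ``in particular'' clause by exhaustion is in fact more detailed than the paper's.
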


\begin{proof}
Define the minimum $m$ and maximum $M$ of $x$ and $y$ and note that $x|_{\p^{\f}\mathcal{B}}=m|_{\p^{\f}\mathcal{B}}$ and that $y|_{\p^{\text{f}}\mathcal{B}}=M|_{\p^{\f}\mathcal{B}}$. It follows by definition of a minimiser and by lemma \ref{mm_lemma} that $m$ and $M$ are also minimisers on $\mathcal{B}$. Assume that $g\in {\mathcal{B}^{\inn}}$ such that $x_g=y_g$. Then $\Delta_g(M)=\Delta_g(m)$ and by the maximum principle $M_{gs}-m_{gs}=0$ for all $s\in S$. Inductively it then follows that $x|_{\mathcal{B}}\equiv y|_{\mathcal{B}}$. 
\end{proof}

Existence of minimisers for compact domains is a well known fact that follows from coercivity of the action (\ref{vp}). However, we shall construct global minimisers as solutions from the so-called anti-integrable limit in the next section, so we leave out the proof of the following informative lemma.

\begin{lemma}\label{existence_lemma}Let $\mathcal{B}\subset G$ be a compact set and let $f:\p^{\f}\mathcal{B} \to \R$ be given. Then there exists a minimiser $x$ of (\ref{vp}) on $\mathcal{B}$, with boundary values given by $f$, that is $x|_{\p^{\f}\mathcal{B}}=f$.
\end{lemma}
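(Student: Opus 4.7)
The plan is to apply the direct method of the calculus of variations. Because $\mathcal{B}$ is compact (i.e.\ finite), the action $W^\rho_\mathcal{B}(x)$ depends only on the values $x_g$ for $g\in \mathcal{B}^{\out}=\mathcal{B}^{\inn}\cup \p^{\f}\mathcal{B}$, and once we fix $x|_{\p^{\f}\mathcal{B}}=f$, the problem reduces to minimising a continuous function of finitely many real variables $\{x_g\}_{g\in \mathcal{B}^{\inn}}$. So existence will follow as soon as we establish a lower bound, coercivity in these free variables, and continuity of $W^\rho_\mathcal{B}$.

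First, I would note that since $c_0,c_1$ are absolute minima of $V$, we have $V\geq V(c_0)$ everywhere, and hence $W^\rho_\mathcal{B}(x)\geq \#\mathcal{B}\cdot\#S\cdot V(c_0)>-\infty$ for every admissible $x$. Consequently $\alpha:=\inf\{W^\rho_\mathcal{B}(x):x|_{\p^{\f}\mathcal{B}}=f\}$ is finite, and there exists a minimising sequence $x^n$ with $W^\rho_\mathcal{B}(x^n)\to \alpha$.

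Next I would show coercivity in the interior values. Since $G$ is connected and $\mathcal{B}$ is finite, for every $g\in \mathcal{B}^{\inn}$ we can pick a path $g=g_0,g_1,\ldots,g_L$ in the Cayley graph with $g_L\in \p^{\f}\mathcal{B}$ and $g_0,\ldots,g_{L-1}\in \mathcal{B}$; every edge of such a path then contributes a term $\tfrac{\rho}{4}(x^n_{g_{i+1}}-x^n_{g_i})^2$ to $W^\rho_\mathcal{B}(x^n)$. If $|x^n_g|\to \infty$ for some $g\in \mathcal{B}^{\inn}$, then, since $x^n_{g_L}=f_{g_L}$ is fixed, on the chosen path there must be at least one edge on which the increment has absolute value at least $(|x^n_g|-|f_{g_L}|)/L$, and the associated term in the action tends to $+\infty$. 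Combined with the uniform lower bound on all the other terms this forces $W^\rho_\mathcal{B}(x^n)\to \infty$, contradicting $W^\rho_\mathcal{B}(x^n)\to\alpha$. Hence $\|x^n|_{\mathcal{B}^{\inn}}\|_\infty$ stays bounded.

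By the Bolzano--Weierstrass theorem, after passing to a subsequence $x^n|_{\mathcal{B}^{\inn}}$ converges to some $x^\ast|_{\mathcal{B}^{\inn}}$; set $x^\ast|_{\p^{\f}\mathcal{B}}:=f$ and extend $x^\ast$ to $G$ arbitrarily. Continuity of $W^\rho_\mathcal{B}$ (which is a polynomial in the differences together with a continuous $V$) gives $W^\rho_\mathcal{B}(x^\ast)=\lim_n W^\rho_\mathcal{B}(x^n)=\alpha$, so $x^\ast$ realises the infimum. For any finitely supported perturbation $v$ with $\supp v\subset \mathcal{B}^{\inn}$, the function $x^\ast+v$ satisfies the same boundary condition, so $W^\rho_\mathcal{B}(x^\ast+v)\geq \alpha=W^\rho_\mathcal{B}(x^\ast)$, which is exactly the defining property of a minimiser in the sense of Definition \ref{def_gm}. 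The only step that requires some care is the coercivity argument, since $V$ itself need not be coercive; but the quadratic ``gradient'' part of $W^\rho_\mathcal{B}$, together with the fixed boundary datum $f$, takes over that role.
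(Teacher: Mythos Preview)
Your proposal is correct. The paper in fact does not give a proof of this lemma, remarking only that existence ``follows from coercivity of the action'' and then omitting the argument since global minimisers are later obtained via the anti-continuum limit; your direct-method argument is exactly the standard coercivity proof the paper alludes to.
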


%
%\begin{proof}
%Holds by convexity. We do not need this, actually, so I might not explain it...
%\end{proof}

\subsection{The anti-continuum limit}\label{anticontinuum}
In this section we explain that for small enough constants $\rho$ in the equation (\ref{rr}), a wealth of solutions may be obtained by a version of the implicit function theorem. This very useful method has been used extensively in Aubry-Mather theory, see e.g. \cite{abramovici,baesens,MackayMeiss,anticontinuum}.

In the case that $\rho=0$, the equation (\ref{rr}) reads \begin{equation}\label{ah} V'(x_g)=0 , \text{ for all } \ g\in G , \end{equation} which is solved by requiring that for every $g\in G$, $x_g$ is a critical point of $V$. A proof based on Newton iteration shows that it is possible to find solutions of (\ref{rr}) for small constants $\rho$, as continuations of the known solutions of the problem (\ref{ah}). This is the content of the following theorem, the proof of which is very similar to those in \cite{MackayMeiss,anticontinuum}. We provide a proof for the reader's convenience in the appendix.

\begin{theorem}\label{ac_theorem}
For every solution $x$ of the anti-continuum limit problem (\ref{ah}) with $x_g\in [c_0,c_1]$ for all $g\in G$, there exist constants $\sigma_0>0$ and $\rho_0>0$, independent of $x$, such that for every $\rho$ with $0\leq \rho\leq \rho_0$ and every set $\mathcal{B}\subset G$, there exists a unique function $x^\rho:G\to \R$ with $\|x^\rho-x\|_{\infty}\leq \sigma_0$ which solves (\ref{rr}) on $\mathcal{B}^{\inn}$  and coincides with $x$ on $G\backslash \mathcal{B}^{\inn}$.
Moreover, $\|x^\rho-x\|_{\infty}\to 0$ as $\rho\to 0$, so that we may write $x=x^0$.% and we may assume that the following inequality holds: $$\sigma_0<\frac{c_1-c_0}{4 \sqrt{\#S}}.$$
\end{theorem}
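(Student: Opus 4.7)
The plan is to recast the problem as a fixed-point equation on an $\ell^\infty$-type Banach space and apply Banach's contraction principle, treating $\rho\Delta$ as a small perturbation of a diagonal operator that is invertible by virtue of the Morse condition on $V$.

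Writing $y = x + u$ with $u \in E := \{v \in \ell^\infty(G) : v \equiv 0 \text{ on } G \setminus \mathcal{B}^{\inn}\}$, and using $V'(x_g) = 0$, the equation on $\mathcal{B}^{\inn}$ becomes
\[ V''(x_g)\, u_g \;=\; \rho\,(\Delta(x+u))_g - R(u)_g, \]
where $R(u)_g := \int_0^1 (1-t)\, V'''(x_g + t u_g)\, u_g^2\, dt$ is the quadratic Taylor remainder. Since $V$ is Morse, the set of critical points of $V$ lying in the compact interval $[c_0, c_1]$ is finite, so
\[ \alpha := \min\bigl\{|V''(c)| : c \in [c_0, c_1],\ V'(c) = 0\bigr\} \;>\; 0 \]
is a positive constant depending only on $V$. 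The diagonal multiplication operator $(Lu)_g := V''(x_g)\, u_g$ is therefore invertible on $\ell^\infty(G)$ with $\|L^{-1}\| \leq \alpha^{-1}$, independently of the choice of $x$. On $E$ I define the operator
\[ (Tu)_g := \begin{cases} V''(x_g)^{-1}\bigl(\rho(\Delta(x+u))_g - R(u)_g\bigr), & g \in \mathcal{B}^{\inn},\\ 0, & g \notin \mathcal{B}^{\inn},\end{cases} \]
whose fixed points in $E$ are precisely the functions $u = x^\rho - x$ one is after.

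The key estimates are, with $K := \max(|c_0|, |c_1|)$, $M := \sup_{[c_0 - 1,\, c_1 + 1]}|V'''|$, and the standard bound $\|\Delta\|_{\ell^\infty \to \ell^\infty} \leq 2|S|$: for $u, \tilde u \in E$ with $\|u\|_\infty, \|\tilde u\|_\infty \leq \sigma_0 \leq 1$,
\[ \|Tu\|_\infty \leq \alpha^{-1}\bigl(2\rho|S|(K + \sigma_0) + \tfrac{M}{2}\sigma_0^2\bigr), \qquad \|Tu - T\tilde u\|_\infty \leq \alpha^{-1}\bigl(2\rho|S| + M\sigma_0\bigr)\,\|u - \tilde u\|_\infty. \]
I first choose $\sigma_0 \in (0,1]$ small enough that $M\sigma_0/\alpha \leq 1/4$, and then $\rho_0 > 0$ small enough that $2\rho_0|S|/\alpha \leq 1/4$ and $2\rho_0|S|(K+\sigma_0)/\alpha \leq \sigma_0/2$. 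Both constants depend only on $V$ and $|S|$. For every $0 \leq \rho \leq \rho_0$, $T$ then maps the closed ball $B_{\sigma_0}(0) \subset E$ into itself and is a $\tfrac{1}{2}$-contraction there, so Banach's fixed-point theorem yields a unique fixed point $u^\rho \in B_{\sigma_0}(0)$, and $x^\rho := x + u^\rho$ is the desired solution on $\mathcal{B}^{\inn}$ coinciding with $x$ on $G \setminus \mathcal{B}^{\inn}$. The contraction estimate at $u = 0$ further gives $\|u^\rho\|_\infty \leq 2\|T(0)\|_\infty \leq 4\alpha^{-1}\rho|S|K \to 0$ as $\rho \to 0$.

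The main obstacle, and what dictates working on $\ell^\infty$ rather than some $\ell^p$ space, is producing $\sigma_0$ and $\rho_0$ that are simultaneously uniform in the anti-continuum solution $x$ (which is free to take any critical value of $V$ in $[c_0,c_1]$ at each $g$) and in the set $\mathcal{B}$ (which may be infinite). All the above estimates reduce to finitely many properties of $V$ on the compact interval $[c_0, c_1]$ together with the local finite-valence bound $\|\Delta\|_{\ell^\infty \to \ell^\infty} \leq 2|S|$, none of which depends on the combinatorics of $G$, on the choice of $x$, or on $\mathcal{B}$; this is exactly the uniformity required by the statement.
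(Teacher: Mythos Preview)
Your proof is correct and follows essentially the same route as the paper: both set up a Banach fixed-point argument on $\ell^\infty(G)$, using the Morse lower bound $|V''(x_g)|\geq\alpha>0$ to invert the diagonal linearisation and treating $\rho\Delta$ as a small bounded perturbation (via $\|\Delta\|_{\ell^\infty\to\ell^\infty}\leq 2|S|$). The paper phrases this as a quasi-Newton iteration $K_{\rho,x}(X)=X-D_XF(x,0)^{-1}F(X,\rho)$ on the $\sigma_0$-ball around $x$, whereas you substitute $u=X-x$ and work on the closed subspace $E$ of perturbations vanishing outside $\mathcal{B}^{\inn}$; these are the same map up to translation. One minor point: you invoke $V'''$ to control the Taylor remainder, which tacitly assumes $V\in C^3$, while the paper gets by with continuity of $V''$ (and remarks that Lipschitz $V''$ suffices for an explicit $\sigma_0$); since Morse functions in this context are smooth this is harmless.
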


Next, we show that for small enough $\rho$ all uniformly bounded solutions with values in $[c_0-\sigma_0,c_1+\sigma_0]$ can be found as continuations from the anti-continuum limit and that minimisers are continuations of solutions which have values in absolute minima of $V$. 

\begin{theorem}\label{gm_continuations}
Let $\rho_0$ and $\sigma_0$ be as obtained in theorem \ref{ac_theorem}. Then there exists a $\rho_1$ with $0< \rho_1\leq \rho_0$, such that for all $0<\rho\leq \rho_1$ the following holds. For any function $\tilde x:G\to \R$ that solves (\ref{rr}) with such a $\rho$ on some set $\mathcal{B}^{\inn}\subset G$ and satisfies $\tilde x_g\in \{c_0,c_1\}$ for all $g\in G\backslash \mathcal{B}^{\inn}$, there exists a solution $x$ of the anti-continuum problem (\ref{ah}), such that $\|x-\tilde x\|_\infty\leq \sigma_0$. In other words, with the notation from theorem \ref{ac_theorem} we may write $\tilde x=x^\rho$ for a solutions $x=x^0$ of the anti-continuum limit (\ref{ah}).

%the following holds. For any function $\tilde x$ solving (\ref{rr}) on a set $\mathcal{B}^{\inn}\subset G$ with $\tilde x_g\in \{c_0,c_1\}$ for all $g\in \p^{\f}\mathcal{B}$ and such that $\tilde x_g\in [c_0,c_1]$ for all $g\in \mathcal{B}^{\inn}$, there exists a solution $x^0$ of the anti-continuum problem (\ref{ah}), such that $\tilde x=x^\rho$ as in theorem \ref{ac_theorem}.
%satisfies $\|x^\rho-x^0\|_{\infty}\leq \sigma_0$, where $x^0$ is a solution of the anti-continuum problem (\ref{ah}). In other words, it holds that $\tilde x=x^\rho$, where $x^\rho$ is as in theorem \ref{ac_theorem}. 
Furthermore, for a small enough constant $\rho_1$, $x^0_g\in \{c_0,c_1\}$ for all $g\in G$, whenever $\tilde x = x^\rho$ is a minimiser on $\mathcal{B}$. 
\end{theorem}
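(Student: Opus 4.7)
My plan follows the two-assertion structure of the theorem. For the first claim, the strategy is to produce $x^0$ explicitly by observing that the hypothesis forces each $\tilde x_g$ to lie close to a critical point of $V$. Rearranging (\ref{rr}) gives $V'(\tilde x_g)=\rho(\Delta\tilde x)_g$, and I would first secure an a priori sup-norm bound on $\tilde x$ via a discrete maximum-principle argument based on the sign of $V'$ outside $[c_0,c_1]$ and the boundary values $\tilde x|_{G\setminus\mathcal{B}^{\inn}}\in\{c_0,c_1\}$. With $\|\tilde x\|_\infty$ uniformly bounded, $|(\Delta\tilde x)_g|$ is uniformly bounded, so $|V'(\tilde x_g)|\leq C\rho$ everywhere. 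Because $V$ is Morse, its critical set is discrete, so there exists a monotone $\delta(\cdot)$ with $\delta(\alpha)\to 0$ as $\alpha\to 0$ such that $|V'(y)|\leq\alpha$ on the relevant bounded range forces $y$ to lie within $\delta(\alpha)$ of some critical point. I then define $x^0_g$ to be the critical point nearest $\tilde x_g$ when $g\in\mathcal{B}^{\inn}$ and $x^0_g:=\tilde x_g$ otherwise, obtaining a solution of (\ref{ah}) with $\|x^0-\tilde x\|_\infty\leq\delta(C\rho)$. Choosing $\rho_1\leq\rho_0$ so that $\delta(C\rho_1)\leq\sigma_0$, the uniqueness clause of Theorem \ref{ac_theorem} identifies $\tilde x$ with $(x^0)^\rho$.

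For the second claim, I would argue by contradiction using a single-point variation. Suppose $\tilde x=x^\rho$ is a minimiser on $\mathcal{B}$ but that $c:=x^0_{g_0}\notin\{c_0,c_1\}$ for some $g_0\in\mathcal{B}^{\inn}$ (off $\mathcal{B}^{\inn}$ the conclusion already holds by hypothesis). Since $c_0,c_1$ are the unique absolute minima of $V$ and $V(c_0)=V(c_1)$ is bounded away from all other critical values, there is a fixed $\eta>0$ depending only on $V$ with $V(c)\geq V(c_0)+\eta$. Let $v$ be supported at $g_0$ with $v_{g_0}:=c_0-x^\rho_{g_0}$; this is an admissible finitely-supported variation. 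The induced change in $W^\rho_{\mathcal{B}}$ splits into a potential part $|S|\bigl[V(c_0)-V(x^\rho_{g_0})\bigr]$, which converges to $|S|\bigl[V(c_0)-V(c)\bigr]\leq-|S|\eta$ as $\rho\to 0$ by Theorem \ref{ac_theorem}, and a kinetic part $\tfrac{\rho}{2}\sum_{s\in S}\bigl[(x^\rho_{g_0s}-c_0)^2-(x^\rho_{g_0s}-x^\rho_{g_0})^2\bigr]$, which is uniformly bounded in magnitude by $\rho\cdot O(1)$ thanks to the a priori bound on $\tilde x$. Hence shrinking $\rho_1$ further ensures the total change is strictly negative, contradicting minimality.

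The main obstacle is securing the a priori sup-norm bound on $\tilde x$ in the first step, since $\mathcal{B}^{\inn}$ may be infinite and no finite-domain maximum principle is directly applicable; I expect this to follow from coercivity of $V$ at infinity combined with the $\{c_0,c_1\}$-valued boundary data. Once the uniform bound is in place, both steps reduce to routine quantitative consequences of Morse nondegeneracy of $V$ and the continuous dependence $\|x^\rho-x^0\|_\infty\to 0$ furnished by Theorem \ref{ac_theorem}.
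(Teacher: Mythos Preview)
Your proposal is correct and follows essentially the same route as the paper: for the first assertion, bound $|V'(\tilde x_g)|$ by $O(\rho)$ via a uniform bound on the discrete Laplacian, invoke the Morse property of $V$ to place each $\tilde x_g$ within $\sigma_0$ of some critical point, then appeal to the uniqueness clause of Theorem~\ref{ac_theorem}; for the second, use a single-point variation replacing $x^\rho_{g_0}$ by $c_0$ and compare the $\Theta(1)$ potential gain against the $O(\rho)$ kinetic cost.

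The only noteworthy difference is that you explicitly flag the a~priori sup-norm bound on $\tilde x$ as the main obstacle, whereas the paper simply asserts $|\tilde x_g-\tilde x_{gs}|\leq 2(c_1-c_0)$ without further comment. In the paper's actual applications this bound is available because the relevant $\tilde x$ are minimisers sandwiched between the constant global minimisers $x^{c_0}$ and $x^{c_1}$ via Lemma~\ref{comparison}, so your concern is legitimate but harmless in context. Your proposed route to the bound (sign of $V'$ outside $[c_0,c_1]$ plus the $\{c_0,c_1\}$-valued boundary data) is the natural one and works when $\mathcal{B}$ is finite; for infinite $\mathcal{B}$ one indeed needs an additional growth hypothesis on $V$, which the paper never states but also never needs.
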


\begin{proof}
Because $V$ is Morse, there exists for any $\sigma>0$ a $\rho>0$ such that if $|V'(X)|<\rho$ then $|X - c| < \sigma$ for some critical point $c$ of $V$. Now suppose that $\tilde x$ is a solution to (\ref{rr}) as stated in the theorem. By the proof of theorem \ref{gm_continuations}, $\sigma_0\leq (1/2)(c_1-c_0)$, and so for all $g\in G$ and $s\in S$ it follows that $|\tilde x_g- \tilde x_{gs}|\leq 2(c_1-c_0)$. This implies that
 $$|V'(\tilde x_g)|=|\mathbb{1}_{\mathcal{B}^{\inn}} (\rho \Delta_g(\tilde x))|\leq 2\rho(c_1-c_0)(\#S)\ \mbox{uniformly for}\ g\in G\, ,$$ where $\mathbb{1}_{\mathcal{B}}$ denotes the indicator function on the set ${\mathcal{B}}\subset G$.
Thus, there is a $0<\rho_1\leq \rho_0$ so that if $0\leq \rho\leq \rho_1$, then for any $g\in G$ there is a critical point $c\in[c_0,c_1]$ of $V$, such that $|\tilde x_g-c|< \sigma_0$ for all $g$. In other words, for all such $\rho$ there exists a solution $x$ of (\ref{ah}), such that $\|\tilde x-x\|_{\infty}\leq \sigma_0$. Because $x^\rho$ is the unique solution to (\ref{rr}) on $\mathcal{B}$ with $\|x^\rho-x\|_{\infty}\leq \sigma_0$, this implies that $x^{\rho}=\tilde x$.

To prove the second part of the theorem observe that, since $c_0$ and $c_1$ are the only absolute minima of $V$ in the interval $[c_0,c_1]$ and $V$ is Morse, there is a constant $\tilde d$ such that $V(c)-V(c_0)\geq \tilde d$ for every critical point $c \in (c_0, c_1)$. Assume that $x^0_{g_0}=c$, for some critical point $c\in(c_0,c_1)$ and some $g_0\in G$. Taking $\rho_1$ small enough, it follows from the continuity of $V$ that $V(x^\rho_{g_0})\geq \frac{\tilde d}{2}+V(c_0)$ for all $\rho\leq \rho_1$. Furthermore, by taking $\rho_1$ even smaller if necessary, we may assume that $(\#S)(c_1-c_0)^2 \rho_1\leq 2\tilde d.$ Defining a variation $\bar x$ of $x^\rho$ by $\bar x_g=x^\rho_g$ for all $g\neq g_0$ and $\bar x_{g_0}=c_0$, it follows that $$W_{\{{g_0}\}}(x^\rho)=\sum_{s\in S}\frac{\rho}{4}(x_{{g_0}s}-x_{g_0})^2+V(x_{g_0})\geq \frac{\tilde d}{2}+V(c_0)\geq (\#S)\frac{\rho}{4}(c_1-c_0)^2+V(c_0)\geq W_{\{{g_0}\}}(\bar x),$$ so $x^\rho$ is not a global minimiser.

\end{proof}

The following corollary is a direct consequence of theorem \ref{gm_continuations} together with lemma \ref{comparison}, applied to the global minimisers of (\ref{vp}) that are constant with values either $c_0$ or $c_1$ (see remark \ref{xc01}).

\begin{corollary}\label{gm_range}Let $\rho\leq \rho_1$ and $\tilde x=x^\rho$ be, as in the second part of theorem \ref{gm_continuations}, a minimiser on $\mathcal{B}\subset G$. Then, for all $g\in G$, $x^\rho_g\in[c_0,c_0+\sigma_0)\cup(c_1-\sigma_0,c_1]$.
\end{corollary}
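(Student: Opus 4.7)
The plan is to combine the anti-continuum estimate $\|x^\rho - x^0\|_\infty \leq \sigma_0$ supplied by theorem~\ref{ac_theorem} with a comparison argument that confines $x^\rho$ between the constant global minimisers $c_0$ and $c_1$. The second part of theorem~\ref{gm_continuations} tells us that, under the hypothesis that $\tilde x = x^\rho$ is a minimiser on $\mathcal B$, the associated anti-continuum solution $x^0$ takes values in $\{c_0,c_1\}$ everywhere; so the $\sigma_0$-estimate already places each $x^\rho_g$ in the union $[c_0-\sigma_0, c_0+\sigma_0] \cup [c_1-\sigma_0, c_1+\sigma_0]$. To rule out the portions of these intervals lying outside $[c_0,c_1]$, I need to show $c_0 \leq x^\rho_g \leq c_1$ for every $g$, and this is precisely where the comparison with the constant minimisers enters.

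By remark~\ref{xc01}, the constant functions $x^{c_0}\equiv c_0$ and $x^{c_1}\equiv c_1$ are global minimisers, hence minimisers on $\mathcal B$ in the sense of definition~\ref{def_gm}. On $\partial^{\f}\mathcal B \subset G\setminus \mathcal{B}^{\inn}$ the hypothesis of theorem~\ref{gm_continuations} gives $\tilde x_g \in \{c_0,c_1\}$, so the boundary ordering $c_0 \leq \tilde x_g \leq c_1$ holds. Lemma~\ref{comparison} applied to the pair $(x^{c_0},\tilde x)$ then forces $\tilde x_g \geq c_0$ throughout $\mathcal B$ (either $\tilde x \equiv c_0$ on $\mathcal B$, or strict inequality holds on $\mathcal B^{\inn}$); symmetrically, lemma~\ref{comparison} applied to $(\tilde x, x^{c_1})$ yields $\tilde x_g \leq c_1$. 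Outside $\mathcal B^{\inn}$ the two-sided bound is automatic since $\tilde x_g \in \{c_0,c_1\}$.

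Combining the two steps gives $x^\rho_g \in [c_0,c_0+\sigma_0]\cup[c_1-\sigma_0,c_1]$, which is the statement of the corollary up to the openness of the inner endpoints. The only genuinely delicate point is this strict inequality at $c_0+\sigma_0$ and $c_1-\sigma_0$; this is harmless, however, because the constant $\sigma_0$ of theorem~\ref{ac_theorem} is only required to be small enough to separate the absolute minima $c_0,c_1$ of $V$ from each other and from the remaining critical points. Replacing $\sigma_0$ by any strictly smaller positive number preserves all previous conclusions, so we may assume at the outset that the $\sigma_0$ appearing in theorem~\ref{ac_theorem} is chosen strictly smaller than the one used in the statement of the corollary, turning the closed endpoints into open ones. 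I expect no real obstacle in executing the plan: the only minor subtlety is the bookkeeping between minimisers on $\mathcal B$ and global minimisers needed to invoke lemma~\ref{comparison} correctly, which is resolved by the observation that global minimisers are minimisers on every subset.
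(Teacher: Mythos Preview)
Your proposal is correct and follows exactly the route the paper indicates: the paper states the corollary as ``a direct consequence of theorem~\ref{gm_continuations} together with lemma~\ref{comparison}, applied to the global minimisers that are constant with values either $c_0$ or $c_1$,'' and your write-up simply unpacks that sentence. One small remark: your detour for the strict endpoints (shrinking $\sigma_0$) is unnecessary, since the proof of theorem~\ref{gm_continuations} already produces the strict inequality $|\tilde x_g - c| < \sigma_0$ when $\rho\leq\rho_1$.
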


To analyse the behaviour of a minimiser, we may now analyse the set of points $g\in G$ where $x^0_g-x_{gs}^0\neq 0$. This gives us an estimate on the action of the $x^\rho$, because such points give an action contribution of size $\rho$ and the rest of the terms have a positive action contribution.

\section{Minimal Dirichlet problem at infinity}\label{DP}

We wish to construct global minimisers that solve the Dirichlet problem at infinity as given in definition \ref{dp_def}. In other words, we are looking for global minimisers which are on neighbourhoods near the boundary at infinity uniformly close to either $c_0$ or $c_1$.
%\begin{definition}\label{dp_def}
%Given two sets $D_0, D_1\subset \p G$ with $\overline{\mathring{D}}_0=D_0$ and $\overline{\mathring{D}}_1=D_1$, such that $\mathring{D_0}\cap \mathring{D_1}=\varnothing$ and $\overline{D_0}\cup \overline{D_1}=\p G$, %, such that $\overline{D_0} \cup \overline{D_1}=\p \mathcal{K}$ 
%we say that a global minimiser $x$ solves the minimal Dirichlet problem at infinity for the equation (\ref{vp}) on $D_0$ and $D_1$, if:
%\begin{itemize}
%\item For every $\xi \in \mathring{D}_0$ and for any $\epsilon>0$ there exists a neighborhood $\mathcal{O}_\xi\subset \overline{G}:=G\cup \p G$ of $\xi$, such that for all $g\in \mathcal{O}_\xi$, $|x_g-c_0|<\epsilon$.
%\item For every $\eta \in \mathring{D}_1$ and for any $\epsilon>0$ there exists a neighborhood $\mathcal{O}_\xi\subset \overline{G}$ of $\eta$, such that for all $g\in \mathcal{O}_\eta$, $|x_g-c_1|<\epsilon$.
%\end{itemize}
%
%and for any representative ray $\gamma_{\tilde g,\xi}\in \xi$ and $\gamma_{\bar g,\eta}\in \eta$ that $\lim_{|g|\to \infty}\gamma_{\tilde g,\xi}=c_0$ and $\lim_{|g|\to \infty}\gamma_{\bar g,\eta}=c_1$.
%\end{definition}
We shall construct them as limits of minimisers on balls with growing radii, using the concept of the cone from definition \ref{cone}.

\begin{proposition}\label{local_minimisers}
Let $c_0< c_1$ be the two distinct absolute minima of $V$ and let $D_0, D_1\subset \p G$ be as in definition \ref{dp_def}.
%$B^\varepsilon_{\bar r}(\xi_0)\subset \p \mathcal{K}$ be a ball at infinity. 
Define a solution $\tilde x$ of the anti-continuum limit (\ref{ah}) by $$\displaystyle \tilde x_g:=\begin{cases}c_0 & \text{ if } g\in (\mathcal{C}_{D_0})^{\inn} ,\\ c_1 & \text{ else.} \end{cases}$$
For every $N\in \N$, let $x^N:G\to \R$ be a minimiser on $\mathcal{B}_N$, such that $x^N|_{((\mathcal{B}_N)^{\inn})^c}\equiv \tilde x|_{((\mathcal{B}_N)^{\inn})^c}$.
Then, as $N\to \infty$, $x^N$ converge along a subsequence to a global minimiser $\bar x$ of (\ref{rr}).
\end{proposition}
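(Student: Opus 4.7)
The plan is a standard compactness-plus-minimality argument: first establish uniform pointwise bounds on the $x^N$, extract a pointwise convergent subsequence by a diagonal argument, and then pass to the limit in the local minimiser inequality by localising the action to a fixed finite set.

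For the uniform bound, existence of each $x^N$ is guaranteed by Lemma \ref{existence_lemma}, and by construction $x^N$ coincides with $\tilde x$ outside $(\mathcal{B}_N)^{\inn}$, where $\tilde x$ takes only the values $c_0$ and $c_1$. Hence $x^N$ is a minimiser satisfying the hypotheses of Theorem \ref{gm_continuations}, and Corollary \ref{gm_range} yields $x^N_g \in [c_0, c_0+\sigma_0) \cup (c_1-\sigma_0, c_1] \subset [c_0, c_1]$ for every $g \in G$. Since $G$ is countable, a Cantor diagonal extraction then produces a subsequence $x^{N_k}$ converging pointwise to some $\bar x : G \to [c_0, c_1]$.

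To verify that $\bar x$ is a global minimiser in the sense of Definition \ref{def_gm}, fix an arbitrary variation $v : G \to \R$ with finite support and choose a finite set $\mathcal{B} \subset G$ with $\supp(v) \subset \mathcal{B}^{\inn}$. Because $S$ is symmetric, any $g \notin \mathcal{B}$ satisfies $gs \notin \mathcal{B}^{\inn}$ for all $s \in S$, so both $v_g$ and $v_{gs}$ vanish there, and the terms indexed by $g \notin \mathcal{B}$ in the sum defining $W^\rho$ cancel between $x + v$ and $x$. For $k$ large enough that $\mathcal{B} \subset (\mathcal{B}_{N_k})^{\inn}$, the minimiser property of $x^{N_k}$ on $\mathcal{B}_{N_k}$ thus reduces to the fixed-domain inequality
\[
W^\rho_{\mathcal{B}}(x^{N_k} + v) - W^\rho_{\mathcal{B}}(x^{N_k}) \geq 0.
\]

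The expression on the left-hand side depends only on the values of $x^{N_k}$ on the finite set $\mathcal{B} \cup \mathcal{B}^{\out}$, and the integrand $(\rho/4)(X-Y)^2 + V(X)$ is continuous, so pointwise convergence on this finite set lets us pass to the limit $k \to \infty$ to obtain $W^\rho_{\mathcal{B}}(\bar x + v) - W^\rho_{\mathcal{B}}(\bar x) \geq 0$. Since $v$ was arbitrary, this is precisely the global minimiser property. I do not anticipate a serious obstruction; the only subtlety is the bookkeeping required to localise the action difference to a fixed finite set so that pointwise convergence suffices to pass to the limit.
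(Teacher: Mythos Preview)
Your argument is correct and follows the same overall scheme as the paper: obtain a uniform pointwise bound, extract a subsequence, and pass to the limit in the minimality inequality. The paper's proof is terser; it cites the earlier remark that limits of minimisers on exhausting domains are global minimisers, whereas you spell out the localisation of the action difference explicitly, which is fine.

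The one substantive difference is how the uniform bound $x^N_g\in[c_0,c_1]$ is obtained. You invoke Theorem~\ref{gm_continuations} and Corollary~\ref{gm_range}, which rely on the anti-continuum machinery and the standing assumption $\rho\le\rho_1$. The paper instead applies the comparison principle (Lemma~\ref{comparison}) directly: since the constant functions $c_0$ and $c_1$ are global minimisers and the boundary data $\tilde x$ take values in $\{c_0,c_1\}$, comparing $x^N$ with each of these constants forces $c_0\le x^N_g\le c_1$. This route is more elementary, works for every $\rho>0$, and avoids loading the small-$\rho$ hypothesis into a statement that does not strictly need it. Your use of Corollary~\ref{gm_range} is not wrong in context, but it imports more structure than the proposition requires.
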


\begin{proof}
By lemma \ref{comparison} $c_0\leq x^N_g\leq c_1$ for all $N\in \N$ and all $g\in G$. By Tychonov's theorem $[c_0,c_1]^{G}$ is a compact space w.r.t. pointwise convergence, so $x^N$ has a convergent subsequence $x^{N_k}\to \bar x$, which is then a global minimiser.

%\leq \rho 2^{-1}|S_1|\sum_{i\leq n} C_6 e^{\varepsilon i(D-\beta)}, $$ 
%which implies by the definition of a global minimiser that $$W_{\mathcal{B}_n}(x^{\rho,n})\leq \rho 2^{-1}|S_1|\sum_{i=1}^n C_6 e^{\varepsilon i(D-\beta)} \leq \rho 2^{-1}|S_1| C_6 e^{\varepsilon n(D-\beta)}(\sum_{i\in\N} e^{-i})\leq \rho |S_1|C_6 e^{\varepsilon n(D-\beta)}.$$
\end{proof}

%The following observation is important for the intuition. Assume that $D_0=B^\varepsilon_r(\xi_0)$, a metric ball at infinity. Since $x^N$ is a minimiser on $\mathcal{B}_N$, it holds that $W_{\mathcal{B}_N}(x^N)\leq W_{\mathcal{B}_N}(\tilde x)$ and hence $$W_{\mathcal{B}_N}(x^N)\leq W_{\mathcal{B}_N}(\tilde x)=\sum_{|g|\leq N}\sum_{s\in S}\rho \frac{1}{2}(\tilde x_{gs}-\tilde x_g)^2 + V(\tilde x_g)\leq \frac{\rho}{2}\#S \#(\mathcal{A}_{B_{\bar r}(\xi_0)}\cap \mathcal{B}_N )$$

\subsection{Transition sets}
Let $x^N$ be as in proposition \ref{local_minimisers}. By corollary \ref{gm_range} the domain of the function $x^N$ in $\mathcal{B}_{N+1}$ can be split in the following two sets $$ \mathcal{B}_N^{c_0}:=\{ g \in \mathcal{B}_{N+1} \ | \ \tilde x_g\in [c_0,c_0+ \sigma_0)\}\ \text{ and }\ \mathcal{B}_N^{c_1}:=\{ g \in \mathcal{B}_{N+1} \ | \ \tilde x_g\in (c_1- \sigma_0,c_1]\}.$$ 
We define  the ``transition set'', as  $$\mathcal{T}_N(x^N):=\{g\in \mathcal{B}_{N+1} \ | \ |x^N_g-x^N_{gs}|\geq 2\sigma_0\}=\p^{\f}\mathcal{B}_N^{c_0}=\p^{\f} \mathcal{B}_N^{c_1}.$$  
%Define and note that $\p \mathcal{B}_n^{c_0} = \p \mathcal{B}_n^{c_1}=\mathcal{T}(x^n)$. 

We shall show that, as a consequence of minimality, the set $\mathcal{T}_N(x^N)$ in some sense cannot be too large, and so the sets $\mathcal{B}_N^{c_{0}}$ and $\mathcal{B}_N^{c_{1}}$ behave somewhat nicely. The first statement in this direction is about connectedness of the sets $\mathcal{B}_N^{c_0}$ and $\mathcal{B}_N^{c_1}$, where a set $\mathcal{D}\subset G$ is connected if the corresponding set of points together with the edges connecting them is connected in the Cayley graph $\mathcal{K}$. The following lemma states that every connected component of the sets $\mathcal{B}_N^{c_0}$ and $\mathcal{B}_N^{c_1}$ needs to touch the boundary of the ball $\mathcal{B}_N$.

\begin{lemma}\label{connected_components}For every connected component $\mathcal{D}^0$ of $\mathcal{B}_N^{c_0}$ and $\mathcal{D}^1$ of $\mathcal{B}_N^{c_1}$, $$\mathcal{D}^{0} \cap \p^{\out} \mathcal{B}_N\neq \varnothing \ \text{ and }  \mathcal{D}^{1} \cap \p^{\out} \mathcal{B}_N\neq \varnothing \ .$$ %Moreover, it holds that if $\id \in \mathcal{D}^{0,1}$, then $\mathcal{D}^{0,1}=\mathcal{B}_n^{c_0,c_1}$.
\end{lemma}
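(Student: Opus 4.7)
I argue by contradiction: assume some connected component $\mathcal{D}^0$ of $\mathcal{B}_N^{c_0}$ satisfies $\mathcal{D}^0\cap\p^{\out}\mathcal{B}_N=\varnothing$, so that $\mathcal{D}^0\subset\mathcal{B}_N$ (the case of $\mathcal{D}^1$ is symmetric). Since $\mathcal{B}_{N+1}=\mathcal{B}_N^{c_0}\sqcup\mathcal{B}_N^{c_1}$ by corollary \ref{gm_range} and $\mathcal{D}^0$ is maximal connected in $\mathcal{B}_N^{c_0}$, every vertex of $\mathcal{B}_{N+1}\setminus\mathcal{D}^0$ adjacent to $\mathcal{D}^0$ lies in $\mathcal{B}_N^{c_1}$, so $x^N$ has a jump of magnitude at least $c_1-c_0-2\sigma_0$ across every edge exiting $\mathcal{D}^0$ in $\mathcal{B}_{N+1}$.

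My plan is to contradict minimality by producing a competitor $y$ for $x^N$ that strictly decreases the action. Using theorem \ref{gm_continuations}, write $x^N=(\hat x^0)^\rho$ for the anti-continuum solution $\hat x^0:G\to\{c_0,c_1\}$ that equals $c_0$ on $\mathcal{B}_N^{c_0}$, $c_1$ on $\mathcal{B}_N^{c_1}$, and $\tilde x$ off $\mathcal{B}_N^{\inn}$. Define $\hat y^0$ to agree with $\hat x^0$ everywhere except on $\mathcal{D}^0\cap\mathcal{B}_N^{\inn}$, where $\hat y^0\equiv c_1$, and let $y:=(\hat y^0)^\rho$. Since $\hat y^0=\hat x^0$ on $(\mathcal{B}_N^{\inn})^c$, the uniqueness in theorem \ref{ac_theorem} forces $y=x^N$ on $(\mathcal{B}_N^{\inn})^c$, so $v:=y-x^N$ is an admissible variation supported in $\mathcal{B}_N^{\inn}$.

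To compare the actions, note that $V(c_0)=V(c_1)$ and $\|x^N-\hat x^0\|_\infty,\|y-\hat y^0\|_\infty\to 0$ as $\rho\to 0$, so for $\rho\leq\rho_1$ the action difference reduces, up to corrections negligible beside $\rho$, to $\frac{\rho}{4}(c_1-c_0)^2\,[E(\hat y^0)-E(\hat x^0)]$, where $E(z):=\#\{(g,s)\in\mathcal{B}_N\times S:\,z_g\neq z_{gs}\}$ counts transition pairs inside $\mathcal{B}_N$. The flip erases every transition pair between $\mathcal{D}^0\cap\mathcal{B}_N^{\inn}$ and the surrounding $\mathcal{B}_N^{c_1}$; the discrete maximum principle applied to $x^N$ on $\mathcal{D}^0\cap\mathcal{B}_N^{\inn}$, which is subharmonic because $\Delta x^N_g=\rho^{-1}V'(x^N_g)\geq 0$, forces at least one such ``good'' edge to exist (otherwise $x^N\equiv c_0$ would propagate across $\mathcal{B}_N^{\inn}$ to yield the degenerate case $\mathcal{D}^0=\mathcal{B}_N$, ruled out separately by comparison with the competitor that flips all of $\mathcal{B}_N^{\inn}$ to $c_1$). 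The main obstacle is the creation of new transition pairs between $\mathcal{D}^0\cap\mathcal{B}_N^{\inn}$ and $\mathcal{D}^0\cap\p^{\inn}\mathcal{B}_N$, where the boundary data pins $x^N$ to $c_0$; the hardest step will be a combinatorial accounting, leveraging the cone structure of $\tilde x$ together with the isoperimetric estimate of lemma \ref{isoperimetric}, that shows the eliminated transitions strictly outnumber the new ones. Once this is established, $W^\rho_{\mathcal{B}_N}(y)<W^\rho_{\mathcal{B}_N}(x^N)$ contradicts the minimality of $x^N$ on $\mathcal{B}_N$.
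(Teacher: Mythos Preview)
Your plan is substantially more complicated than the paper's argument and introduces an ``obstacle'' that does not actually exist in the correct approach.

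The paper's proof is a direct five-line computation: define the competitor $\tilde x^N$ by setting $\tilde x^N\equiv c_1$ on \emph{all} of $\mathcal{D}^0$ (not only on $\mathcal{D}^0\cap\mathcal{B}_N^{\inn}$) and $\tilde x^N\equiv x^N$ elsewhere. Because $\mathcal{D}^0$ is a maximal connected component of $\mathcal{B}_N^{c_0}$ and $\mathcal{D}^0\cap\p^{\out}\mathcal{B}_N=\varnothing$, \emph{every} neighbour of $\mathcal{D}^0$ lies in $\mathcal{B}_N^{c_1}$. Hence on $\p^{\f}\mathcal{D}^0$ each edge contribution changes from at least $\tfrac{\rho}{4}(c_1-c_0-2\sigma_0)^2$ to at most $\tfrac{\rho}{4}(\#S)\sigma_0^2$, while on $(\mathcal{D}^0)^{\inn}$ both the gradient and potential terms do not increase. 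For $\sigma_0$ small (e.g.\ $\sigma_0<(c_1-c_0)/3$) this is strictly negative, contradicting minimality. No anti-continuum continuation, no subharmonicity, no cone structure, and no isoperimetric inequality are needed.

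Your self-imposed restriction to flip only $\mathcal{D}^0\cap\mathcal{B}_N^{\inn}$ is what creates the ``main obstacle''. With that restriction the new transitions at $\mathcal{D}^0\cap\mathcal{S}_N$ can genuinely outnumber the deleted ones (take $\mathcal{D}^0$ to be a single interior vertex with many neighbours on $\mathcal{S}_N$ and only one neighbour in $\mathcal{B}_N^{c_1}$), so the competitor you construct need not decrease the action; the vague appeal to ``cone structure together with the isoperimetric estimate'' does not resolve this and is in any case disproportionate to the statement. Separately, your claim that the potential corrections are ``negligible beside $\rho$'' would need a bound like $\#\mathcal{D}^0\cdot O(\rho^2)\ll \rho$, which is not available for large components; and the detour through the continuation $(\hat y^0)^\rho$ adds nothing over simply using the constant $c_1$ as competitor. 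Drop these layers and flip the whole component.
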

\begin{proof}
Assume that there is a connected component $\mathcal{D}^0\subset \mathcal{B}_N^{c_0}$ which does not intersect the boundary $\p^{\out} \mathcal{B}_N$. Then $x^N_g\in [c_1-\sigma_0,c_1]$ on $\p^{\out}\mathcal{D}^0=(\mathcal{D}^0)^{\out} \backslash \mathcal{D}^0$ and $x^N_g\in [c_0,c_0+\sigma_0]$ on $ \mathcal{D}^0$. Let $$ S^\rho_g(x):=\sum_{s\in S}\left(\frac{\rho}{4}(x_{gs}-x_g)^2+V(x_g)\right), $$ and define the variation $\tilde x^N$ of $x^N$ supported on $\mathcal{D}^0$ by $\tilde{x}^N\equiv c_1$ on $ \mathcal{D}^0$ and by $\tilde{x}^N\equiv x^N$ on $G\backslash\mathcal{D}^0$. Obviously $(S^\rho_g(\tilde x^N)-S^\rho_g(x^N))\leq 0$ for all $g\in (\mathcal{D}^0)^{\inn}$ and it is easy to see that for all $g\in \p^{\f}\mathcal{D}^0$, $$S_g^\rho(\tilde x^N)-S_g^\rho(x^N)\leq \frac{\rho}{4}((\#S)\sigma_0^2-(c_1-c_0-2\sigma_0)^2). $$ %Moreover, for every $g\in \overline{\mathcal{D}^0}\backslash \mathcal{D}^0$ it holds that $\tilde x_g^n=x_g^n$, so $$\begin{aligned}S^\rho_g(\tilde x^n)-S^\rho_g(x^n)&=\sum_{s\in S}\frac{\rho}{4}\left((\tilde x^n_{gs}- x^n_g)^2-(x^n_{gs}-x^n_g)^2\right) =\sum_{\substack{s\in S \\ gs\in \mathcal{D}_0^i}}\frac{\rho}{4}\left((\tilde x^n_{gs}-x^n_g)^	2-(x^n_{gs}-x^n_g)^2\right) \\ &\leq -\frac{\rho}{4}((c_1-c_0-2\sigma_0)^2-\sigma_0^2) \#\{s\in S \ | \ gs \in \mathcal{D}_0^i\}<0\end{aligned}.$$
If necessary, we further reduce $\rho_1$ defined as in section \ref{anticontinuum}, so that $\sigma_0<(c_1-c_0)/3$. By the inequality in theorem (\ref{ac_theorem}) it then follows that 
$$\begin{aligned}W_{(\mathcal{D}^0)^{\out}}(\tilde x^N)- W_{(\mathcal{D}^0)^{\out}}(x^N) &\leq \sum_{g\in (\mathcal{D}^0)^{\inn}}(S^\rho_g(\tilde x^N)-S^\rho_g(x^N))+\sum_{g\in \p^{\f}\mathcal{D}^0}(S^\rho_g(\tilde x^N)-S^\rho_g(x^N))
\\ &%+\sum_{g\in \overline{\mathcal{D}^0}\backslash \mathcal{D}_0^i}(S^\rho_g(\tilde x^n)-S^\rho_g(x^n)) 
\leq \frac{\rho}{4}((\#S)\sigma_0^2-(c_1-c_0-2\sigma_0)^2) \# (\p^{\f}\mathcal{D}^0)<0,\end{aligned}$$ 
which is a contradiction to the fact that $x^N$ is a minimiser.

%Assume that $\id\in \mathcal{B}_n^{c_0}$ and that there are two components $\id\in \mathcal{D}^1_0$ and $\id \notin \mathcal{D}^2_0$. 

% there are two components to the Since is connected by paths to the whole boundary, the connected component of $\id$ must intersect the whole $\p \mathcal{B}_n^{c_0}$.

\end{proof}

The next lemma is about estimating the size of the boundary $\p^{\out} \mathcal{B}_N^{c_1}$ within a domain $\mathcal{D}$ by the size of the boundary of $\mathcal{D}$ which is within $\mathcal{B}_N^{c_1}$. The estimate we obtain can be seen as a weak quasi-minimality-type condition on the transition set $\mathcal{T}_N(x^N)$, in the spirit of the Morse lemma for quasi-geodesics. 

\begin{lemma}\label{ts_estimate}
For every finite set $\mathcal{D}\subset G$ and every $x^N$ defined as in proposition \ref{local_minimisers}, $$ \#((\p^{\out} \mathcal{B}_N^{c_1}\cap \mathcal{D})\leq 6(\# S) \# (\p^{\inn} \mathcal{D}\cap \mathcal{\mathcal{B}}_N^{c_1}).$$
\end{lemma}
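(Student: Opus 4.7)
The plan is a variational argument using the minimality of $x^N$ on $\mathcal{B}_N$. Set $\mathcal{E} := \mathcal{D}\cap \mathcal{B}_N^{c_1}$ and let $\tilde x^N$ be the competitor obtained from $x^N$ by replacing its values on $\mathcal{E}$ with the constant $c_0$; the support of the perturbation lies inside $(\mathcal{B}_N)^{\inn}$ by corollary \ref{gm_range} together with the construction of $x^N$ in proposition \ref{local_minimisers}. The plan is to estimate $\Delta W := W_{\mathcal{B}_N}^\rho(\tilde x^N) - W_{\mathcal{B}_N}^\rho(x^N)$ from above in terms of the two counts appearing in the statement, and then to read off the claimed bound from $\Delta W \geq 0$.

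I decompose $\Delta W$ into on-site and bond contributions. Each on-site term $(\#S)\,[V(c_0) - V(x^N_g)]$ at $g \in \mathcal{E}$ is nonpositive, since $V(c_0) = V(c_1)$ is the absolute minimum of $V$ and $x^N_g \in (c_1 - \sigma_0, c_1]$ by corollary \ref{gm_range}. The bond contribution $\frac{\rho}{2}[(\tilde x^N_g - \tilde x^N_{gs})^2 - (x^N_g - x^N_{gs})^2]$ of an oriented edge $(g,gs)$ splits into three regimes: bonds with both endpoints in $\mathcal{E}$ contribute $\leq 0$ trivially; \emph{good} bonds, with $g\in\mathcal{E}$ and $gs\in\mathcal{B}_N^{c_0}$, contribute at most $\frac{\rho}{2}[\sigma_0^2 - (c_1 - c_0 - 2\sigma_0)^2]$, which is strictly negative and of order $\rho (c_1-c_0)^2$ once I further require $\sigma_0 < (c_1-c_0)/3$ by shrinking $\rho_1$ if necessary; \emph{bad} bonds, with $g \in \mathcal{E}$ and $gs \in \mathcal{B}_N^{c_1}\setminus \mathcal{D}$, contribute at most $\frac{\rho}{2}(c_1-c_0)^2$.

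The decisive counting step is as follows. Every bad bond has its $\mathcal{E}$-endpoint in $\p^{\inn}\mathcal{D} \cap \mathcal{B}_N^{c_1}$, and each vertex emits at most $\#S$ bad bonds, hence $\#(\mathrm{bad}) \leq (\#S)\,\#(\p^{\inn}\mathcal{D} \cap \mathcal{B}_N^{c_1})$. Conversely, any $h \in \p^{\out}\mathcal{B}_N^{c_1}\cap\mathcal{D}$ possessing a neighbour in $\mathcal{E}$ is the $\mathcal{B}_N^{c_0}$-endpoint of at least one good bond, and distinct $h$'s yield distinct good bonds. Substituting these counts into $\Delta W \geq 0$ and using the non-positivity of the remaining contributions yields $\#(\mathrm{good}) \leq 3\cdot \#(\mathrm{bad})$ for $\sigma_0$ sufficiently small, which controls every $h \in \p^{\out}\mathcal{B}_N^{c_1}\cap\mathcal{D}$ that possesses a neighbour in $\mathcal{E}$.

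The remaining $h$'s, those whose neighbours in $\mathcal{B}_N^{c_1}$ all lie outside $\mathcal{D}$, themselves belong to $\p^{\inn}\mathcal{D}$. I absorb them by running the symmetric competitor that raises $\mathcal{D}\cap \mathcal{B}_N^{c_0}$ up to $c_1$ and combining the two resulting inequalities, using lemma \ref{connected_components} to rule out isolated $\mathcal{B}_N^{c_0}$-components inside $\mathcal{D}$; this doubles the constant and produces the factor $6(\#S)$ in the final bound. The principal obstacle will be precisely this coupling step: the symmetric competitor naturally controls things by $\p^{\inn}\mathcal{D}\cap \mathcal{B}_N^{c_0}$, and converting that control into the stated bound involving $\p^{\inn}\mathcal{D}\cap \mathcal{B}_N^{c_1}$ is what requires the second application of minimality together with the bookkeeping just sketched.
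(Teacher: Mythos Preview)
Your variational setup and the good/bad bond bookkeeping are correct and match the paper's strategy. The gap is the final coupling step, which you yourself flag as the ``principal obstacle'' but do not actually carry out. The symmetric competitor raising $\mathcal{D}\cap\mathcal{B}_N^{c_0}$ to $c_1$ yields, by the same reasoning as your first competitor, an inequality with $\#(\p^{\inn}\mathcal{D}\cap\mathcal{B}_N^{c_0})$ on the right, not $\#(\p^{\inn}\mathcal{D}\cap\mathcal{B}_N^{c_1})$; combining the two inequalities is therefore circular, not a conversion from one phase to the other. Lemma~\ref{connected_components} does not help either: it says that each connected component of $\mathcal{B}_N^{c_0}$ and $\mathcal{B}_N^{c_1}$ reaches the sphere $\mathcal{S}_{N+1}$, which gives no quantitative relation between the numbers of points of $\p^{\inn}\mathcal{D}$ that land in the two phases for an arbitrary finite $\mathcal{D}$.

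The paper avoids this detour by a single change to your first step: it sets the competitor equal to $c_0$ on $\mathcal{B}_N^{c_1}\cap\mathcal{D}^{\inn}$ rather than on $\mathcal{B}_N^{c_1}\cap\mathcal{D}$. Since every neighbour of a point of $\mathcal{D}^{\inn}$ lies in $\mathcal{D}$, a bad bond $(g,gs)$ with $g\in\mathcal{B}_N^{c_1}\cap\mathcal{D}^{\inn}$ and $gs\in\mathcal{B}_N^{c_1}$ outside the support automatically has $gs\in\mathcal{D}\setminus\mathcal{D}^{\inn}=\p^{\inn}\mathcal{D}$, hence $gs\in\p^{\inn}\mathcal{D}\cap\mathcal{B}_N^{c_1}$; and the $\mathcal{B}_N^{c_0}$-endpoint of every good bond lies in $(\mathcal{D}^{\inn})^{\out}\subset\mathcal{D}$, hence in $\p^{\out}\mathcal{B}_N^{c_1}\cap\mathcal{D}$. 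One competitor then produces both sides of the claimed inequality directly, with no second variation and no appeal to lemma~\ref{connected_components}. Replacing your $\mathcal{E}$ by $\mathcal{D}^{\inn}\cap\mathcal{B}_N^{c_1}$ and rerunning your first three paragraphs is the whole fix.
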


\begin{proof}
Define a variation $\tilde x^N$ of $x^N$ with support in $\mathcal{B}_N^{c_1}\cap(\mathcal{D}^{\inn})$:$$\tilde x^N:=\begin{cases}c_0 \ & \text{ for all } g \in \mathcal{B}_N^{c_1}\cap (\mathcal{D}^{\inn}) \ \\ x^N \ & \text{ else. } \end{cases}$$
Look at the outer set $\mathcal{C}:= (\mathcal{B}_N^{c_1}\cap(\mathcal{D}^{\inn}) )^{\out}$ and the inner set $\mathcal{I}:=(\mathcal{B}_N^{c_1}\cap(\mathcal{D}^{\inn})^{\inn}$, such that $\mathcal{I}^{\out}=\mathcal{C}^{\inn}$. We will estimate the action functional $W_{\mathcal{C}}^\rho(\tilde x^N)-W_{\mathcal{C}}^\rho(x^N)$. In view of that, we first split the set $\mathcal{C}^{\out}$ into disjoint subsets by $$\mathcal{C}^{\out}= \mathcal{I}\cup (\mathcal{C}^{\inn} \backslash \mathcal{I})\cup (\mathcal{C}\backslash\mathcal{C}^{\inn})\cup  (\mathcal{C}^{\out}\backslash \mathcal{C})=\mathcal{I}\cup \p^{\out}\mathcal{I}\cup\p^{\inn}\mathcal{C}\cup\p^{\out}\mathcal{C}$$ and note that by the definition of $\tilde x^N$ $$W_{\mathcal{I}}^\rho(\tilde x^N)- W_{\mathcal{I}}^\rho( x^N)\leq 0.$$ 

Next, we estimate $W_{\p^{\inn}\mathcal{C}}^\rho(\tilde x^N)-W_{\p^{\inn}\mathcal{C}}^\rho(x^N)$. Obviously $\tilde x^N_g=x^N_g$ for every $g\in \p^{\inn}\mathcal{C}$ and so \begin{equation}\label{ie1}W_{\p^{\inn}\mathcal{C}}^\rho(\tilde x^N)-W_{\p^{\inn}\mathcal{C}}^\rho(x^N)=
\sum_{g\in \p^{\inn}\mathcal{C}}\sum_{\substack{s\in S \\ gs \in  \mathcal{C}^{\inn}}}\frac{\rho}{4}\left((\tilde x^N_{gs}-x^N_g)^2-(x^N_{gs}-x^N_g)^2\right). \end{equation}
We would like to to split the sum in (\ref{ie1}) into sums over $\p^{\inn}\mathcal{C}\cap \mathcal{B}_N^{c_1}$ and $\p^{\inn}\mathcal{C}\cap \mathcal{B}_N^{c_0}$. We recall from definition of $\mathcal{C}$ and equation (\ref{boundary}) that $$\p^{\inn}\mathcal{C}=\p^{\out} (\mathcal{B}_N^{c_1}\cap \mathcal{D}^{\inn})= (\p^{\out} \mathcal{B}_N^{c_1}\cap {\mathcal{D}})\cup ((\mathcal{B}_N^{c_1})^{\out}\cap \p^{\inn} \mathcal{D}).$$ It thus follows by disjointness of $\mathcal{B}_N^{c_{0}}$ and $\mathcal{B}_N^{c_{1}}$ that 
$$ \p^{\inn}\mathcal{C} \cap \mathcal{B}_N^{c_0}=\p^{\out} \mathcal{B}_N^{c_1}\cap {\mathcal{D}} \ \text{ and } \ \p^{\inn}\mathcal{C} \cap \mathcal{B}_N^{c_1}=\mathcal{B}_N^{c_1}\cap \p^{\inn} \mathcal{D}.$$ 
By corollary \ref{gm_range} $(\tilde x^N_{gs}-x^N_g)^2\leq (c_1-c_0)^2$ for all $g\in \p^{\inn}\mathcal{C}\cap \mathcal{B}_N^{c_1}$. Furthermore, it holds for all $g\in \p^{\inn}\mathcal{C}\cap \mathcal{B}_N^{c_0}$ that $(\tilde x^N_{gs}-x^N_g)^2\leq \sigma_0^2$ and $(x^N_{gs}-x^N_g)^2\geq (c_1-c_0-2\sigma_0)^2$ and by the inequlity in theorem \ref{ac_theorem} it follows that $$(c_1-c_0-2\sigma_0)<\frac{c_1-c_0}{2} \ \mbox{ and } \ \frac{(c_1-c_0)^2}{4}-\sigma_0^2>\frac{(c_1-c_0)^2}{6},$$ $$\text{ so } \ (\tilde x^N_{gs}-x^N_g)^2-(x^N_{gs}-x^N_g)^2\leq \frac{1}{6}(c_1-c_0)^2.$$
Thus we obtain $$W_{\p^{\inn}\mathcal{C}}^\rho(\tilde x^N)-W_{\p^{\inn}\mathcal{C}}^\rho(x^N)\leq \#(\mathcal{B}_N^{c_1}\cap \p^{\inn} \mathcal{D})
%\sum_{\substack{s\in S\\ gs \in  \mathring{\mathcal{C}}}} 
\frac{\rho(\# S)}{4}(c_0-c_1)^2-\#(\p^{\out} \mathcal{B}_N^{c_1}\cap {\mathcal{D}})
% \sum_{\substack{ s\in S\\ gs \in  \mathring{\mathcal{C}}}}
\frac{\rho}{24}(c_0-c_1)^2.$$

To estimate $W_{\p^{\out}\mathcal{I}}^\rho(\tilde x^N)-W_{\p^{\out}\mathcal{I}}^\rho(x^N)$, we observe  that $V(\tilde x_g^N)\leq V(x^N_g)$ for every $g\in \mathcal{C}^{\inn}$. Moreover, if $gs\in \mathcal{C}^{\inn}$ then $(\tilde x^N_{gs}-\tilde x_g^N)=0$ and if $gs\notin \mathcal{C}^{\inn}$, then $\tilde x^N_{gs}= x^N_{gs}$. This implies that \begin{equation}\label{ie2}W_{\p^{\inn}\mathcal{I}}^\rho(\tilde x^N)-W_{\p^{\inn}\mathcal{I}}^\rho(x^N)\leq
\sum_{g\in \p^{\inn}\mathcal{I}}\sum_{\substack{ s\in S\\ gs \notin  \mathcal{C}^{\inn}}}\frac{\rho}{4}\left((x^N_{gs}-\tilde x^N_g)^2-(x^N_{gs}-x^N_g)^2\right).\end{equation}
By symmetry the sum in (\ref{ie1}) is equal to the sum (\ref{ie2}), so 
$$0\leq W_{\mathcal{C}}^\rho(\tilde x^N)-W_{\mathcal{C}}^\rho(x^N)\leq 2\left( \#(\mathcal{B}_N^{c_1}\cap \p^{\inn} \mathcal{D})
%\sum_{\substack{s\in S\\ gs \in  \mathring{\mathcal{C}}}} 
\frac{\rho(\# S)}{4}(c_0-c_1)^2-\#(\p^{\out} \mathcal{B}_N^{c_1}\cap {\mathcal{D}})
% \sum_{\substack{ s\in S\\ gs \in  \mathring{\mathcal{C}}}}
\frac{\rho}{24}(c_0-c_1)^2\right),$$ which finishes the proof.

\end{proof}

\subsection{Main lemma}

In this section we prove the main technical result about how the transition sets $\mathcal{T}_N(x^N)$ behave when $N$ goes to infinity. It roughly shows that if the transition set for some large enough $N$ is locally too big, than it grows much faster than $\mathcal{B}_n$. This will be used in section \ref{dp_section} to show that uniformly in $N$ the transition sets $\mathcal{T}_N(x^N)$ extend towards the identity. 

For the remainder of this text let us assume, by taking the constant $\varepsilon>0$ from the visual metric sufficiently small enough, that \begin{equation}\label{dim_large}D=\frac{h}{\varepsilon}>\frac{1}{4}.\end{equation}

\begin{definition}\label{rini}

Let us define for $r>0$ and $\xi_0\in \p G$ the following objects:

\begin{itemize}

\item 
the sequence $r_i\to r$ by $r_0:=0$ and \begin{equation}\label{ri}r_i:=\frac{6 r}{\pi^2}\sum_{j=1}^i\frac{1}{j^2}\ \text{ and denote } \ d_i:=r_{i+1}-r_i=\frac{6 r}{\pi^2 (i+1)^2} \ \text{ for all }\ i\geq 1 \ ,\end{equation}  

\item 
for a given natural number $n_1$ the increasing sequence of real numbers \begin{equation}\label{ni}n_{i+1}:=\left(\frac{D+1/2}{D+1/4}\right)n_i=\left(\frac{D+1/2}{D+1/4}\right)^{i-1}n_1\ ,\end{equation} 

\item 
for $r_i$ and $n_i$ as above \begin{equation}\label{vi}\mathcal{V}_i:=(\mathcal{C}_{B^\varepsilon_{r_{i+1}}(\xi_0)}\backslash \mathcal{C}_{B^\varepsilon_{r_i}(\xi_0)})\backslash \mathcal{B}_{\lfloor n_i\rfloor} \,\end{equation} where $\lfloor \cdot \rfloor$ denotes the floor function. 
\end{itemize}

\end{definition}

\begin{remark}\label{vi_structure}
Let $r_i$, $n_i$ and $\mathcal{V}_i$ be as in the definition above. Recall the definition of the constant $t_n$ from lemma \ref{separating_sets} and rewrite it for every $n\in \N$ as  \begin{equation}\label{konstants}t_n= \frac{k_1^{-1}}{4} e^{-\varepsilon n} \ \text{ where } \ k_1^{-1}:=4\min\{e^\varepsilon C_4,e^{\varepsilon(2R+\tilde \delta)}C_2\}.\end{equation} Moreover, recall from section \ref{sep_sets} the notion of a separating set $\mathcal{A}_{r,t_n}$ for $\mathcal{C}_{B^\varepsilon_r(\xi_0)}$ and $G\backslash \mathcal{C}_{B^\varepsilon_r(\xi_0)}$ outside $\mathcal{B}_n$ which is, by lemma \ref{separating_sets}, given by the annulus at infinity $A_{r+2t_n}^{t_n}(\xi_0)$. Using definitions (\ref{vi}) and (\ref{konstants}) we may, for every $i\in \N$, write the set $\mathcal{V}_i$ as a disjoint union $\mathcal{V}_i=\bigsqcup_{j\in I_i}\tilde{\mathcal{A}}_{i_j}$, where the sets $\tilde{\mathcal{A}}_{i_j}$ satisfy $\mathcal{A}_{r_{i_j},t_{\lfloor n_i\rfloor}} \subset \tilde{\mathcal{A}}_{i_j}$ and are thus also separating sets. It follows from definitions of $r_i$ and $t_n$ that \begin{equation}\label{eq1}\# I_i\geq  \left\lfloor \frac{k_1d_i}{ e^{-\varepsilon \lfloor n_i\rfloor}}\right\rfloor-1 \geq k_1 d_i e^{-\varepsilon}e^{\varepsilon n_i}-2.\end{equation} 
\end{remark}

The size of $I_i$ measures the number of separating sets outsize a ball of radius $\lfloor n_i\rfloor$  contained in $\mathcal{V}_i$ and grows exponentially with $n_i$. This is an essential part to the proof of the following lemma, which gives us a powerful estimate about the growth of the set $\mathcal{C}_{B^\varepsilon_{r_i}(\xi_0)}\cap \mathcal{B}_N^{c_1}$. The idea behind the proof is the following. From lemma \ref{ts_estimate} we deduce that if $\mathcal{C}_{B^\varepsilon_{r_i}(\xi_0)}\cap \p^{\out} \mathcal{B}_N^{c_1}$ is large, then for all $j\in I_i$, the set $\p^{\inn} \mathcal{C}_{B^\varepsilon_{r_{i_j}}(\xi_0)}\cap \mathcal{B}_N^{c_1}$ can be at most by a factor smaller. By fitting $\p^{\inn} \mathcal{C}_{B^\varepsilon_{r_{i_j}}(\xi_0)}$ into separating sets $\tilde{\mathcal{A}}_{i_j}$ and by using the isoperimetric inequality, the size of $I_i$ implies that $\mathcal{V}_i\cap \mathcal{B}_N^{c_1}$ is exponentially larger than $\mathcal{C}_{B^\varepsilon_{r_i}(\xi_0)}\cap \mathcal{B}_N^{c_1}$. In particular, also $\mathcal{C}_{B^\varepsilon_{r_{i+1}}(\xi_0)}\cap \mathcal{B}_N^{c_1}$ is exponentially larger and the strategy can be repeated by induction.

\begin{lemma}[Main lemma]\label{main_lemma}
Let $r>0$ and $\xi_0\in \p G$ be given and define a constant $k$ by \begin{equation}\label{k}k:=\left(48 (\#S) k_0 \tilde C\right)^{(\frac{4D}{4D-1})}, \end{equation} where the definition of the constants $\tilde C$ and $k_0$ comes from (\ref{cannon_improved}) and from the isoparametric inequality given in lemma \ref{isoperimetric}, respectively. By (\ref{dim_large}) we may furthermore define $L_0$ as the smallest natural number such that $(\log(L_0))^{4D}<L_0$, we recall the definition of the entropy $h$ from proposition \ref{shadow} and let $k_1$ be as in (\ref{konstants}).% and let \begin{equation}\label{alpha} \alpha:=\left(\frac{D+1/2}{D+1/4}\right)-1>0.\end{equation}

Let  $r_i$, $n_i$ and $\mathcal{V}_i$ be as in definition \ref{rini}, whereby $n_1$ is a real number satisfying
%\begin{equation}\label{n1}ke^{\varepsilon (D+\frac{1}{4}) n_0}\geq N_0 \ , \ n_0>\frac{4}{\alpha\varepsilon}\ \text{ and } \ \frac{3r C_2k_1}{4\pi^2}e^{\varepsilon\frac{n_0}{2}}\geq k +2C_2.\end{equation}
\begin{equation}\label{n1}n_1\geq \max\left\{\frac{4}{\varepsilon (4D+1)}\log\left(\frac{L_0}{k}\right) \ , \ 32 h\ , \ \frac{2}{\varepsilon}\log\left(\frac{(k+2\tilde C)4\pi^2}{3r\tilde C k_1e^{-\varepsilon}}\right)\right\}.\end{equation}

Then, whenever there exist an $n_i$ as in (\ref{ni}) satisfying \begin{equation}\label{ih} \#(\mathcal{C}_{B^\varepsilon_{r_i}(\xi_0)}\cap \mathcal{B}_N^{c_1})\geq ke^{\varepsilon (D+\frac{1}{4}) n_i}\end{equation} for some $i$ such that $n_i<N$, then it holds for all integers $\iota\geq i$ with $n_\iota<N$ that $$\#(\mathcal{V}_{\iota}\cap \mathcal{B}_N^{c_1})\geq ke^{\varepsilon (D+\frac{1}{4}) n_\iota}.$$
%Then it follows for any sequence $n_i$ with $n_1\geq n_0$ and all sufficiently large $N\in \N$ that whenever there exists an $i\geq 1$ such that \begin{equation}\label{ih} \#(\mathcal{C}_{B^\varepsilon_{r_i}(\xi_0)}\cap \mathcal{B}_N^{c_1})\geq ke^{\varepsilon (D+\frac{1}{4}) n_i},\end{equation} then for all integers $\iota\geq i$, $$\#(\mathcal{V}_{\iota}\cap \mathcal{B}_N^{c_1})\geq ke^{\varepsilon (D+\frac{1}{4}) n_\iota}.$$
\end{lemma}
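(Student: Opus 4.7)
I would argue by induction on $\iota\geq i$, carrying the dual inductive statement
$$ (\mathrm{I}_\iota)\quad \#(\mathcal{C}_{B^\varepsilon_{r_\iota}(\xi_0)}\cap \mathcal{B}_N^{c_1})\geq ke^{\varepsilon(D+\frac14)n_\iota}, \qquad (\mathrm{II}_\iota)\quad \#(\mathcal{V}_\iota\cap \mathcal{B}_N^{c_1})\geq ke^{\varepsilon(D+\frac14)n_\iota}. $$
The hypothesis (\ref{ih}) supplies $(\mathrm{I}_i)$; I would then derive $(\mathrm{II}_\iota)$ from $(\mathrm{I}_\iota)$, and use the inclusion $\mathcal{V}_\iota\subset \mathcal{C}_{B^\varepsilon_{r_{\iota+1}}(\xi_0)}$ together with the fact that the derivation of $(\mathrm{II}_\iota)$ actually produces a stronger bound to deduce $(\mathrm{I}_{\iota+1})$. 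Since $(\mathrm{II}_\iota)$ is precisely the target conclusion, this closes the induction.

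The core of the argument is the implication $(\mathrm{I}_\iota)\Rightarrow (\mathrm{II}_\iota)$ via the following chain. Set $A:=\mathcal{C}_{B^\varepsilon_{r_\iota}(\xi_0)}\cap \mathcal{B}_N^{c_1}$. The first lower bound on $n_1$ in (\ref{n1}), together with the definition of $L_0$, forces $\#A\geq L_0$, and hence $\log(\#A)\leq(\#A)^{1/(4D)}$. Coulhon's isoperimetric inequality (lemma \ref{isoperimetric}) then yields
$$ \#(\p^{\out}A)\geq \frac{(\#A)^{(4D-1)/(4D)}}{k_0}\geq \frac{k^{(4D-1)/(4D)}}{k_0}\,e^{\varepsilon(D-\frac{1}{16D})n_\iota}, $$
using the algebraic identity $(D+\tfrac14)\tfrac{4D-1}{4D}=D-\tfrac{1}{16D}$. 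By (\ref{boundary}) the outer boundary $\p^{\out}A$ splits into a piece in $\p^{\out}\mathcal{C}_{B^\varepsilon_{r_\iota}(\xi_0)}\cap\mathcal{B}_N^{c_1}$, which lies (outside $\mathcal{B}_{\lfloor n_\iota\rfloor}$) inside $\mathcal{V}_\iota$ and makes $(\mathrm{II}_\iota)$ immediate if it dominates, and a piece in $\p^{\out}\mathcal{B}_N^{c_1}\cap \mathcal{C}_{B^\varepsilon_{r_\iota}(\xi_0)}$, which in the remaining case carries a constant fraction of the above bound.

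In that remaining case I would distribute this quantity across the sub-cones. For each $j\in I_\iota$, lemma \ref{ts_estimate} applied to $\mathcal{D}=\mathcal{C}_{B^\varepsilon_{r_{i_j}}(\xi_0)}\supset\mathcal{C}_{B^\varepsilon_{r_\iota}(\xi_0)}$ gives
$$ \#(\p^{\inn}\mathcal{C}_{B^\varepsilon_{r_{i_j}}(\xi_0)}\cap\mathcal{B}_N^{c_1})\geq \frac{1}{6\#S}\,\#(\p^{\out}\mathcal{B}_N^{c_1}\cap \mathcal{C}_{B^\varepsilon_{r_\iota}(\xi_0)}), $$
and the geometric input from remark \ref{vi_structure} places the sets on the left into pairwise disjoint separating pieces $\tilde{\mathcal{A}}_{i_j}\subset\mathcal{V}_\iota$. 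Summing over $j\in I_\iota$ and inserting $\#I_\iota\geq k_1 d_\iota e^{-\varepsilon}e^{\varepsilon n_\iota}-2$ from (\ref{eq1}) produces
$$ \#(\mathcal{V}_\iota\cap \mathcal{B}_N^{c_1})\geq \frac{(k_1 d_\iota e^{-\varepsilon}e^{\varepsilon n_\iota}-2)\,k^{(4D-1)/(4D)}}{6\#S\cdot k_0}\,e^{\varepsilon(D-\frac{1}{16D})n_\iota}. $$

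The remaining step is bookkeeping, and this is where I expect the main difficulty to lie. The specific value of $k$ in (\ref{k}) is chosen so that $k^{(4D-1)/(4D)}/(48\#S\,k_0\tilde C)=1$, while the second and third conditions on $n_1$ in (\ref{n1}) ensure that the surplus exponent $e^{\varepsilon(\frac12-\frac{1}{16D})n_\iota}$—positive exactly because $D>\tfrac14$ by (\ref{dim_large})—absorbs both the polynomial loss $d_\iota\sim (\iota+1)^{-2}$ and the additive $-2$ in $\#I_\iota$ uniformly in $\iota\geq 1$. This simultaneously establishes $(\mathrm{II}_\iota)$ and upgrades the bound to the strength required for $(\mathrm{I}_{\iota+1})$, namely $ke^{\varepsilon(D+\frac14)n_{\iota+1}}=ke^{\varepsilon(D+\frac12)n_\iota}$ via $\mathcal{V}_\iota\subset\mathcal{C}_{B^\varepsilon_{r_{\iota+1}}(\xi_0)}$, completing the inductive step.
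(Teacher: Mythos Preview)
Your inductive scheme and the ingredients you invoke (Coulhon's isoperimetric inequality, lemma \ref{ts_estimate}, the disjoint separating pieces $\tilde{\mathcal{A}}_{i_j}$ from remark \ref{vi_structure}) are exactly those of the paper. However, there is a genuine gap in your first case.

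You apply the isoperimetric inequality \emph{once}, to $A=\mathcal{C}_{B^\varepsilon_{r_\iota}(\xi_0)}\cap\mathcal{B}_N^{c_1}$, obtaining $\#(\p^{\out}A)\gtrsim k_0^{-1}k^{(4D-1)/(4D)}e^{\varepsilon(D-\frac{1}{16D})n_\iota}$. When the piece $\p^{\out}\mathcal{C}_{B^\varepsilon_{r_\iota}(\xi_0)}\cap\mathcal{B}_N^{c_1}$ dominates, you assert that $(\mathrm{II}_\iota)$ is ``immediate''. But this set lies (outside $\mathcal{B}_{\lfloor n_\iota\rfloor}$) in a \emph{single} separating layer adjacent to the innermost cone, and its cardinality is only of order $e^{\varepsilon(D-\frac{1}{16D})n_\iota}$. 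The target $(\mathrm{II}_\iota)$ demands $ke^{\varepsilon(D+\frac14)n_\iota}$, which is larger by the divergent factor $e^{\varepsilon(\frac14+\frac{1}{16D})n_\iota}$; there is no summation over $j\in I_\iota$ available in this branch, so the bound is far too weak. Your second case works precisely because lemma \ref{ts_estimate} transfers the lower bound to \emph{every} $\p^{\inn}\mathcal{C}_{B^\varepsilon_{r_{i_j}}(\xi_0)}\cap\mathcal{B}_N^{c_1}$, and summing over the $\#I_\iota\sim e^{\varepsilon n_\iota}$ disjoint pieces supplies the missing growth; your first case has no such mechanism.

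The paper circumvents this by applying the isoperimetric inequality not once but to each set $\mathcal{C}_{B^\varepsilon_{r_{i_j}}(\xi_0)}\cap\mathcal{B}_N^{c_1}$, for every $j\in I_\iota$ (each of these contains $A$, hence inherits the hypothesis $\geq ke^{\varepsilon(D+\frac14)n_\iota}$). The case split is then performed \emph{per $j$}: in Case~1 the large boundary piece $\p^{\out}\mathcal{C}_{B^\varepsilon_{r_{i_j}}(\xi_0)}\cap\mathcal{B}_N^{c_1}$ already lies in $\tilde{\mathcal{A}}_{i_j}\cup\mathcal{B}_{\lfloor n_\iota\rfloor}$; in Case~2 lemma \ref{ts_estimate} is invoked. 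Either way one gets a per-slice bound $\#(\tilde{\mathcal{A}}_{i_j}\cap\mathcal{B}_N^{c_1})\geq\tilde{C}e^{\varepsilon D n_\iota}$ (after subtracting $\#\mathcal{B}_{\lfloor n_\iota\rfloor}\leq\tilde{C}e^{\varepsilon D n_\iota}$, a correction you also omit), and only then does one sum over $j$. The essential point is that the isoperimetric step must be carried out per sub-cone so that \emph{both} alternatives feed the summation over $I_\iota$.
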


\begin{proof}
%It follows from remark \ref{vi_structure} that there exist $r_{i_j}$ and $\tilde{\mathcal{A}}_{i_j}$ such that $\mathcal{V}_i=\bigsqcup_{j\in I_i}\tilde{\mathcal{A}}_{i_j}$ and i
By inclusion and by conditions (\ref{n1}) and (\ref{ih}), $$\#( \mathcal{C}_{B^\varepsilon_{r_{i_j}}(\xi_0)}\cap \mathcal{B}_N^{c_1})\geq ke^{\varepsilon (D+\frac{1}{4}) n_i}\geq ke^{\varepsilon (D+\frac{1}{4}) n_1}\geq L_0.$$ 
By definition of $L_0$ it follows for every $L\geq L_0$ that $$ \frac{L}{\log(L)}> L^{\left(1-\frac{1}{4D}\right)}.$$ 
Hence it holds by the isoperimetric inequality (lemma \ref{isoperimetric}) for any set $\mathcal{D}$ with $\# \mathcal{D}\geq L_0$ that \begin{equation}\label{ve} k_0\#( \p^{\out} \mathcal{D})\geq (\# \mathcal{D})^{\left(1-\frac{1}{4D}\right)}.\end{equation}
Since $4D>1$ by (\ref{dim_large}), it follows from the first condition in (\ref{n1}) and assumption (\ref{ih}) that $$ k_0 \#( \p^{\out} ( \mathcal{C}_{B^\varepsilon_{r_{i_j}}(\xi_0)}\cap \mathcal{B}_N^{c_1}))\geq k^{(1-\frac{1}{4D})}e^{\varepsilon D n_i}, $$
 for all $j\in I_i$ where $I_i$ is as in remark \ref{vi_structure} the index set that gives $r_{i_j}$ and $\tilde{\mathcal{A}}_{i_j}$ such that $\mathcal{V}_i=\bigsqcup_{j\in I_i}\tilde{\mathcal{A}}_{i_j}$. Since $ (\mathcal{B}_N^{c_1})^{\out}=\mathcal{B}_N^{c_1}\cap \p^{\out}\mathcal{B}_N^{c_1}$, the following estimates hold:
\begin{equation}\label{boundaries}\begin{aligned} &\#( \p^{\out} ( \mathcal{C}_{B^\varepsilon_{r_{i_j}}(\xi_0)}\cap \mathcal{B}_N^{c_1}))\leq \\ \leq& \#( \p^{\out} \mathcal{C}_{B^\varepsilon_{r_{i_j}}(\xi_0)}\cap (\mathcal{B}_N^{c_1})^{\out})+ \#( (\mathcal{C}_{B^\varepsilon_{r_{i_j}}(\xi_0)})^{\out}\cap  \p^{\out}\mathcal{B}_N^{c_1})\\
%=& \#( \p^{\out} \mathcal{C}_{B^\varepsilon_{r_{i_j}}(\xi_0)}\cap \mathcal{B}_N^{c_1})+ \#( \p^{\out} \mathcal{C}_{B^\varepsilon_{r_{i_j}}(\xi_0)}\cap \p^{\out} \mathcal{B}_N^{c_1})+ \#( (\mathcal{C}_{B^\varepsilon_{r_{i_j}}(\xi_0)})^{\out}\cap  \p^{\out}\mathcal{B}_N^{c_1})\\
\leq & \#( \p^{\out} \mathcal{C}_{B^\varepsilon_{r_{i_j}}(\xi_0)}\cap \mathcal{B}_N^{c_1})+ 2 \#( (\mathcal{C}_{B^\varepsilon_{r_{i_j}}(\xi_0)})^{\out}\cap  \p^{\out}\mathcal{B}_N^{c_1}),\end{aligned}\end{equation} and we may consider two possible cases for any index $j\in I_i$.
\begin{itemize}[leftmargin=40pt]
\item[Case 1:] In the case that $$ k_0 \#( \p^{\out} \mathcal{C}_{B^\varepsilon_{r_{i_j}}(\xi_0)}\cap {\mathcal{B}}_N^{c_1})\geq \frac{1}{2}k^{(1-\frac{1}{4D})}e^{\varepsilon D n_i}$$ holds, we first observe that $\p^{\out} \mathcal{C}_{B^\varepsilon_{r_{i_j}}(\xi_0)}\subset \tilde{\mathcal{A}}_{i_j} \cup \mathcal{B}_{\lfloor n_i\rfloor}$ by lemma \ref{separating_sets} and thus by (\ref{cannon_improved})  
\begin{equation}\label{case1}\#(\tilde{\mathcal{A}}_{i_j}\cap \mathcal{B}_N^{c_1}) \geq\frac{ k^{(1-\frac{1}{4D})}}{2  k_0}e^{\varepsilon D n_i}-\tilde C e^{\varepsilon D n_i}.\end{equation}
\item[Case 2:] Otherwise $$ k_0 \#( \p^{\out} \mathcal{C}_{B^\varepsilon_{r_{i_j}}(\xi_0)}\cap \mathcal{B}_N^{c_1})< \frac{1}{2}k^{(1-\frac{1}{4D})}e^{\varepsilon D n_i},$$ so it follows from (\ref{boundaries}) that $$ k^{(1-\frac{1}{4D})}e^{\varepsilon D n_i}\leq  4 k_0\#( (\mathcal{C}_{B^\varepsilon_{r_{i_j}}(\xi_0)})^{\out}\cap  \p^{\out}\mathcal{B}_N^{c_1})$$%\leq 4 k_0 \#( \mathcal{C}_{B^\varepsilon_{r_{i_j}}(\xi_0)}\cap \mathcal{T}(x^N))$$ 
and by lemma \ref{ts_estimate} that $$k^{(1-\frac{1}{4D})}e^{\varepsilon D n_i} \leq(24 (\#S)  k_0) \# ({\p^{\out}} {\mathcal{C}}_{B^\varepsilon_{r_{i_j}}(\xi_0)}\cap \mathcal{B}_N^{c_1}).$$ Again, since by lemma \ref{separating_sets} $\p^{\out} {\mathcal{C}}_{B^\varepsilon_{r_{i_j}}(\xi_0)}\subset \tilde{\mathcal{A}}_{i_j} \cup \mathcal{B}_{\lfloor n_i\rfloor}$, it follows by (\ref{cannon_improved}) that \begin{equation}\label{case2}\#(\tilde{\mathcal{A}}_{i_j}\cap \mathcal{B}_N^{c_1})\geq \frac{k^{(1-\frac{1}{4D})}}{24 (\# S) k_0}e^{\varepsilon D n_i} - \tilde C e^{\varepsilon D n_i}.\end{equation}
\end{itemize}
Since $\mathcal{C}_{B^\varepsilon_{r_i}(\xi_0)}\subset \mathcal{C}_{B^\varepsilon_{r_{i_j}}(\xi_0)}$ it follows for all $j$ either from (\ref{case1}) or from (\ref{case2}) by the definition of constant $k$ that $$\#(\tilde{\mathcal{A}}_{i_j}\cap \mathcal{B}_N^{c_1})\geq \tilde C e^{\varepsilon D n_i}$$ and so because $\mathcal{V}_i=\bigsqcup_{j\in I_i}\tilde{\mathcal{A}}_{i_j}$ and by (\ref{eq1}) it holds that $$\#(\mathcal{V}_i\cap \mathcal{B}_N^{c_1})\geq ({\tilde C }e^{\varepsilon D n_i})(k_1 d_i e^{-\varepsilon}e^{\varepsilon n_i}-2)= \left(\frac{6r\tilde C k_1e^{-\varepsilon}} {\pi^2(i+1)^2}e^{\varepsilon \frac{n_i}{2}} -2\tilde C  e^{-\varepsilon \frac{n_i}{2}} \right) e^{\varepsilon (D+\frac{1}{2}) n_{i}}.$$
By the definition of $n_{i+1}$ in (\ref{ni}) we may write $(D+1/4)n_{i+1}=(D+1/2)n_i$ and restate the inequlity above as  \begin{equation}\label{eq3} \#(\mathcal{V}_i\cap \mathcal{B}_N^{c_1})\geq \left(\frac{6r\tilde C  k_1e^{-\varepsilon}} {2\pi^2(i+1)^2}e^{\varepsilon \frac{n_1}{2}\left(\frac{D+1/2}{D+1/4}\right)^{i-1}} -2\tilde C \right) e^{\varepsilon (D+\frac{1}{4}) n_{i+1}}.\end{equation}
Let $\alpha:=(1+4D)^{-1}>0$, so that $\left(\frac{D+1/2}{D+1/4}\right)=1+\alpha$ and estimate $$\frac{1} {(i+1)^2}e^{\varepsilon \frac{n_1}{2}(1+\alpha)^{i-1}} \geq e^{\varepsilon \frac{n_1}{2}}\left(\frac{e^{\varepsilon \frac{n_1}{2}(i-1)\alpha}}{(i+1)^2}\right).$$
By condition (\ref{n1}), we chose $n_1$ large enough that $$\frac{3r \tilde C k_1 e^{-\varepsilon}}{4\pi^2}e^{\varepsilon\frac{n_1}{2}}-2\tilde C \geq k.$$ Observe that the function $(1+x)^{-2}e^{\beta (x-1)}$ is monotone increasing for all $x\geq 0$, whenever $\beta\geq 2$. In particular, since $n_1>32 h\geq 4 (4D+1)\varepsilon^{-1}$, it follows that $\alpha \varepsilon n_1>4$, so for all $j\geq 1$ $$\frac{e^{\varepsilon \frac{n_1}{2}(j-1)\alpha}}{(j+1)^2} \geq \frac{1}{4}.$$
Hence, by (\ref{eq3}) and by condition (\ref{n1})
$$\#(\mathcal{V}_i\cap \mathcal{B}_N^{c_1})\geq \left(3r\pi^{-2}\tilde C k_1 e^{-\varepsilon}e^{\varepsilon \frac{n_1}{2}}\left(\frac{e^{\varepsilon \frac{n_1}{2}(i-1)\alpha}}{(i+1)^2}\right) -2\tilde C \right) e^{\varepsilon (D+\frac{1}{4}) n_{i+1}}\geq k e^{\varepsilon (D+\frac{1}{4}) n_{i+1}}.$$ 
 
Thus we have proved the lemma for $\iota=i+1$. Since $\#(\mathcal{C}_{B^\varepsilon_{r_{i+1}}(\xi_0)}\cap \mathcal{B}_N^{c_1})\geq \#(\mathcal{V}_i\cap \mathcal{B}_N^{c_1})$, we obtain the full statement of the lemma by induction.
\end{proof}

\subsection{The Dirichlet problem at infinity}\label{dp_section}

The main lemma from the previous section has the following corollary. It states that, depending on the radius of the largest ball contained in $D_0$, there is a bound on the distance of $\mathcal{B}_N^{c_0}$ to the identity independent of $N$. Note that the results from this corollary are in a sense a softer version of lemma \ref{values_at_infinity} and as such not essential for the rest of the paper. 

\begin{corollary}\label{thm_transition_sets}
Let $B_{r}^\varepsilon(\xi_0)\subset D_0$ be a ball at infinity and let $x^N$ denote any sequence of minimisers solving the problem defined in proposition \ref{local_minimisers}. Then there exist uniform constants $m\in \N$ and $N_0\in \N$, such that for all $N\geq N_0$ $$\mathcal{B}_N^{c_0}\cap \mathcal{B}_{m}\neq \varnothing.$$
\end{corollary}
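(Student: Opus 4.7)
The plan is to argue by contradiction and bootstrap the main lemma (lemma \ref{main_lemma}) to extract an exponential lower bound on the size of $\mathcal{B}_{N_j}^{c_1}$ that eventually exceeds the total size of $\mathcal{B}_{N_j+1}$ given by Cannon's estimate (\ref{cannon_improved}). Concretely, I would negate the statement: for each $j\in\N$ we then find $m_j\geq j$ and $N_j\geq m_j$ with $\mathcal{B}_{N_j}^{c_0}\cap\mathcal{B}_{m_j}=\varnothing$. Since by corollary \ref{gm_range} the sets $\mathcal{B}_{N_j}^{c_0}$ and $\mathcal{B}_{N_j}^{c_1}$ partition $\mathcal{B}_{N_j+1}$, this disjointness forces $\mathcal{B}_{m_j}\subset\mathcal{B}_{N_j}^{c_1}$.

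Next I would fix $n_1$ satisfying the quantitative bound (\ref{n1}); this is possible because $r$ and all geometric constants are prescribed. By the growth of cones (proposition \ref{goc}), for $m_j$ larger than a fixed threshold depending on $n_1$, $r$ and the constants $C_4,C_5,k$, one has
$$\#(\mathcal{C}_{B^\varepsilon_{r_1}(\xi_0)}\cap\mathcal{B}_{m_j})\geq C_5^{-1}r_1^D e^{\varepsilon D m_j}\geq ke^{\varepsilon(D+1/4)n_1}.$$
Since $\mathcal{B}_{m_j}\subset\mathcal{B}_{N_j}^{c_1}$, the same lower bound holds for $\#(\mathcal{C}_{B^\varepsilon_{r_1}(\xi_0)}\cap\mathcal{B}_{N_j}^{c_1})$, activating the hypothesis (\ref{ih}) of the main lemma for $i=1$ (and $j$ large so that $n_1<N_j$). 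The main lemma then delivers $\#(\mathcal{V}_\iota\cap\mathcal{B}_{N_j}^{c_1})\geq ke^{\varepsilon(D+1/4)n_\iota}$ for every $\iota\geq 1$ with $n_\iota<N_j$.

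Choosing $\iota^*$ maximal with $n_{\iota^*}<N_j$, the geometric progression (\ref{ni}) gives $n_{\iota^*}\geq N_j/\lambda$ with $\lambda:=(D+1/2)/(D+1/4)$. Because $\mathcal{V}_{\iota^*}\cap\mathcal{B}_{N_j}^{c_1}\subset\mathcal{B}_{N_j+1}$, the improved Cannon estimate (\ref{cannon_improved}) yields
$$ke^{\varepsilon(D+1/4)N_j/\lambda}\leq\#\mathcal{B}_{N_j+1}\leq\tilde Ce^{h(N_j+1)},$$
and a direct computation using $h=\varepsilon D$ together with the identity $(D+1/4)^2=D(D+1/2)+1/16$ gives
$$\frac{\varepsilon(D+1/4)}{\lambda}=\frac{\varepsilon(D+1/4)^2}{D+1/2}=h+\frac{\varepsilon}{16(D+1/2)}>h,$$
so the left-hand exponent strictly dominates the right-hand one, contradicting the above inequality once $N_j$ is large enough.

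The main obstacle, and the reason the whole scheme works, is precisely this arithmetic identity $(D+1/4)^2>D(D+1/2)$: it guarantees that the iteration rate $\lambda$ of the sequence $n_i$ in (\ref{ni}) is small enough to compound the exponent $D+1/4$ past the entropy $D$. Getting this to bite requires committing to $n_1$ first (subject to (\ref{n1})) and only then letting $m_j\to\infty$ so that proposition \ref{goc} produces enough cone points in $\mathcal{B}_{m_j}$ to trigger the main lemma. Everything else — the partition of $\mathcal{B}_{N_j+1}$ from corollary \ref{gm_range}, Cannon's upper bound, and the geometric progression of the $n_\iota$ — is routine once the main lemma is in place.
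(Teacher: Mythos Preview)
Your proof is correct and follows essentially the same strategy as the paper: negate the conclusion, use the growth of cones (proposition \ref{goc}) to ignite hypothesis (\ref{ih}) of the main lemma at level $i=1$, iterate to obtain an exponential lower bound on $\#(\mathcal{B}_N^{c_1})$, and contradict Cannon's upper bound (\ref{cannon_improved}). The one tactical difference is in the endgame: the paper exploits the freedom to slide $n_1$ within the interval $[n_0,\lambda n_0]$ so that some $n_i$ lands exactly at $N-1$, which makes the exponent comparison $D+\tfrac14$ versus $D$ immediate; you instead fix $n_1$, take the maximal $\iota^*$ with $n_{\iota^*}<N_j$, and close the gap via the clean identity $(D+\tfrac14)^2=D(D+\tfrac12)+\tfrac{1}{16}$, which gives $(D+\tfrac14)/\lambda>D$. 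Both routes are valid and equally short.
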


\begin{proof}
Let $n_0$ be the smallest natural number satisfying the condition (\ref{n1}) from lemma \ref{main_lemma} and define $m$ as the smallest natural number such that $$m\geq \left(\frac{D+1/2}{D+1/4}\right)\left(1+\frac{1}{4D}\right)\log\left(\frac{\pi^{2D}k}{C_5(6r)^D}\right)n_0.$$
Let $N\in \N$ be such that $\mathcal{B}_N^{c_1}\cap \mathcal{B}_{m}= \mathcal{B}_{m}$, so that by proposition \ref{goc} $$ \# (\mathcal{C}_{B^\varepsilon_{r_1}(\xi_0)}\cap \mathcal{B}_N^{c_1})\geq \# (\mathcal{C}_{B^\varepsilon_{r_1}(\xi_0)}\cap \mathcal{B}_{m})\geq \# (\mathcal{C}_{B^\varepsilon_{r_1}(\xi_0)}\cap S_{m})\geq C_5 r_1^De^{\varepsilon D m}.$$ By the definition of $m$ and because $r_1=6r\pi^{-2}$, it follows for all such $N$ that $$ \# (\mathcal{C}_{B^\varepsilon_{r_1}(\xi_0)}\cap \mathcal{B}_N^{c_1})\geq \left(\frac{D+1/2}{D+1/4}\right) ke^{\varepsilon(D+\frac{1}{4})n_0}.$$ By lemma \ref{main_lemma}, it holds for every $N> m$ and for all $i$ with $n_i<N$ and $n_1$ such that \begin{equation}\label{ts1} n_0\leq n_1\leq \left(\frac{D+1/2}{D+1/4}\right)n_0,\ \text{ that } \ \#(\mathcal{C}_{B^\varepsilon_{r_i}(\xi_0)}\cap \mathcal{B}_N^{c_1})\geq  ke^{\varepsilon(D+\frac{1}{4})n_i} .\end{equation} Moreover, it is easy to see that for any $N>m$ there exists a real number $n_1$ as in \ref{ts1} such that $n_i=N-1$ for some $i$, so it follows that \begin{equation}\label{ts2}\#(\mathcal{C}_{B^\varepsilon_{r_i}(\xi_0)}\cap \mathcal{B}_N^{c_1})\geq ke^{\varepsilon(D+\frac{1}{4})(N-1)}.\end{equation}  On the other hand, by proposition \ref{goc} or by (\ref{cannon_improved}), there exists a uniform constant $\tilde K$ such that $$\#(\mathcal{C}_{B^\varepsilon_{r}(\xi_0)}\cap \mathcal{B}_{N+1})\leq \tilde K e^{\varepsilon D N},$$ which contradicts inequality (\ref{ts2}) for large $N$. More precisely, we may choose $N_0>m$ to be the smallest natural number satisfying $ke^{\frac{\varepsilon}{4} N}> \tilde Ke^{\varepsilon(D+\frac{1}{4})}$. 
% Since $\mathcal{C}_{B^\varepsilon_{r_i}(\xi_0)}\backslash \mathcal{B}_{n_{i+1}} \subset (\mathcal{C}_{B^\varepsilon_{r_{i+1}}(\xi_0)})^{\inn}\backslash \mathcal{B}_{n_{i+1}}$ it follows that $\mathcal{B}_N^{c_1}\cap (\p^{\out} \mathcal{B}_N\cap (\mathcal{C}_{B_{\bar r}^\varepsilon(\xi_0)})^{\inn})\neq \varnothing$, contradicting the given boundary values of $x^N$ on $\mathcal{B}_N$ by definition of $x^N$ in proposition \ref{local_minimisers}. 
\end{proof}

The next step towards the proof of the minimal Dirichlet problem is to show that there exists a neighbourhood $\mathcal{O}\subset (G\cup \p G)$ of the set $\mathring{D}_0\subset \p G$, such that for all $N$ large enough $(\mathcal{O}\cap G) \subset \mathcal{B}_N^{c_0}$. Let us thus define the sets that shall act as such neighbourhoods.

\begin{definition}\label{ui_def}
Let $B_r^{\varepsilon}(\xi_0)$ be a ball at infinity, and %\subset D_0$, where $D_0\subset \p \mathcal{K}$ is as in proposition \ref{local_minimizers}. L
let $r_i$  and $n_i$ be as in definition \ref{rini}, given $n_1\in \N$. Define for every $i\geq 1$ the set $$\begin{aligned}& \mathcal{U}^i_{B_r^\varepsilon(\xi_0)}:=\{g\in G \ | \ d(g,(\mathcal{C}_{B_{r_i}^\varepsilon(\xi_0)})^c\cup \mathcal{B}_{n_i})\geq \left(\frac{D+1/2}{D+1/4}\right)k e^{\varepsilon (D+\frac{1}{4}) n_{i}}+1\} \\  & \text{and the union } \ \mathcal{U}_{B_r^\varepsilon(\xi_0)}:=\bigcup_{i\geq 1} \mathcal{U}^i_{B_r^\varepsilon(\xi_0)}.\end{aligned}$$
\end{definition}

The following lemma states that asymptotically the boundary of $\mathcal{U}_{B_r^\varepsilon(\xi_0)}$ converges to the boundary of $\mathcal{C}_{B_r^\varepsilon(\xi_0)}$, so it defines a neighbourhood of $B_r^\varepsilon(\xi_0)$ in the visual metric.

\begin{lemma}\label{neighborhood}
Let $B_r^{\varepsilon}(\xi_0)$ be a ball at infinity, let $r_i$  and $n_i$ be as in definition \ref{rini}, with $n_1$ the smallest natural number satisfying (\ref{n1}), and let $\mathcal{U}^i_{B_r^\varepsilon(\xi_0)}$ be as defined above. Then there exists an $i_0\in \N$, such that for all $i\geq i_0$ there is a natural number $M_i$, with $$\mathcal{C}_{B^\varepsilon_{r_i}(\xi_0)}\backslash \mathcal{B}_{M_i}\subset \mathcal{U}^{i+1}_{B_r^\varepsilon(\xi_0)}\subset \mathcal{U}_{B_r^\varepsilon(\xi_0)}.$$ 
\end{lemma}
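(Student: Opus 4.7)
The inclusion $\mathcal{U}^{i+1}_{B^\varepsilon_r(\xi_0)}\subset \mathcal{U}_{B^\varepsilon_r(\xi_0)}$ holds by the very definition of $\mathcal{U}_{B^\varepsilon_r(\xi_0)}$ as a union, so the entire burden is the first inclusion. Abbreviate $L^\ast := \bigl(\tfrac{D+1/2}{D+1/4}\bigr)\,k\,e^{\varepsilon(D+1/4)n_{i+1}}+1$. For a chosen $g\in \mathcal{C}_{B^\varepsilon_{r_i}(\xi_0)}$ with $|g|>M_i$, membership in $\mathcal{U}^{i+1}_{B^\varepsilon_r(\xi_0)}$ amounts to the two inequalities $d(g,\mathcal{B}_{n_{i+1}})\geq L^\ast$ and $d(g,(\mathcal{C}_{B^\varepsilon_{r_{i+1}}(\xi_0)})^c)\geq L^\ast$. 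The first is immediate from $d(g,\mathcal{B}_{n_{i+1}})=|g|-n_{i+1}$ once we require $M_i\geq n_{i+1}+L^\ast$, so the substance lies in bounding the distance to the complement of the larger cone.

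Suppose toward a contradiction that there exist $\tilde g\notin \mathcal{C}_{B^\varepsilon_{r_{i+1}}(\xi_0)}$ and a geodesic $g=g_0,g_1,\ldots,g_L=\tilde g$ with $L<L^\ast$. Every $g_j$ then satisfies $|g_j|\geq |g|-L>M_i-L^\ast=:n^\ast$, so the geodesic lies strictly outside $\mathcal{B}_{n^\ast}$ provided $M_i>L^\ast$. The plan is to compare the shadows $S(g_j,R)$ along the path. Pick a point $\xi^{(L)}\in S(\tilde g,R)\setminus B^\varepsilon_{r_{i+1}}(\xi_0)$, which exists because $\tilde g$ misses the cone, and construct $\xi^{(j)}\in S(g_j,R)$ for $j=L-1,\ldots,0$ by backward induction. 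The hyperbolic triangle argument from the proof of Lemma \ref{separating_sets}, together with the shadow-diameter bound $\diam_{d_\varepsilon} S(g_j,R)\leq 2C_4 e^{-\varepsilon n^\ast}$ furnished by statement (3) of Proposition \ref{shadow}, supplies a constant $K$ depending only on $\tilde\delta$, $R$, $C_2$, and $C_4$ such that the induction can arrange $d_\varepsilon(\xi^{(j)},\xi^{(j+1)})\leq K\,e^{-\varepsilon n^\ast}$ at each step.

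Telescoping gives $d_\varepsilon(\xi^{(0)},\xi^{(L)})\leq L\,K\,e^{-\varepsilon n^\ast}<L^\ast K\,e^{-\varepsilon n^\ast}$, whereas $\xi^{(0)}\in S(g,R)\subset B^\varepsilon_{r_i}(\xi_0)$ since $g$ lies in the smaller cone, so the triangle inequality in the visual metric yields $d_\varepsilon(\xi^{(0)},\xi^{(L)})>r_{i+1}-r_i=d_i=\tfrac{6r}{\pi^2(i+1)^2}$. These two bounds are incompatible as soon as $M_i-L^\ast\geq \varepsilon^{-1}\log(K L^\ast/d_i)$, so the choice
\[
M_i := \bigl\lceil L^\ast + n_{i+1} + \varepsilon^{-1}\log(K L^\ast/d_i) + 1\bigr\rceil
\]
forces the desired distance estimate and establishes the inclusion $\mathcal{C}_{B^\varepsilon_{r_i}(\xi_0)}\setminus \mathcal{B}_{M_i}\subset \mathcal{U}^{i+1}_{B^\varepsilon_r(\xi_0)}$. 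The threshold $i_0$ is a bookkeeping object: it is chosen so that for $i\geq i_0$ the right-hand side above defines a positive integer and $n^\ast\geq 2R+\tilde\delta$, which is the regime in which the hyperbolic ancestor argument from Lemma \ref{separating_sets} applies. The anticipated main obstacle is precisely this backward-inductive construction of the $\xi^{(j)}$: one must ensure that the freedom in choosing $\xi^{(j)}\in S(g_j,R)$ is compatible with remaining close to the already-fixed $\xi^{(j+1)}$, and it is only the exponential smallness of $\diam_{d_\varepsilon} S(g_j,R)$ at depth $n^\ast$ that lets the errors accumulate to something much smaller than the macroscopic separation $d_i$.
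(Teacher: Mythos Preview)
Your argument is correct and proves the lemma as stated, but it proceeds along a genuinely different route from the paper's proof.

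The paper argues \emph{directly at a single scale}: given $g\in\mathcal{C}_{B^\varepsilon_{r_i}(\xi_0)}$ with $|g|\geq m$ and any $h\in(\mathcal{C}_{B^\varepsilon_{r_{i+1}}(\xi_0)})^c\cup\mathcal{B}_{n_{i+1}}$, it picks boundary points $\tilde\xi\in S(g,R)\subset B^\varepsilon_{r_i}(\xi_0)$ and $\xi\in S(h,R)\setminus B^\varepsilon_{r_{i+1}}(\xi_0)$, drops to ancestors $\tilde g\in\tilde\xi_{\id}$ and $\tilde h\in\xi_{\id}$ at level $n_{i+1}$, and uses the choice of $i_0$ (namely $e^{\varepsilon n_{i+1}}/(i+1)^2\geq \pi^2 C_4/(2r)$) to force $S(\tilde g,R)\cap S(\tilde h,R)=\varnothing$ and hence $d(\tilde g,\tilde h)\geq 2R$. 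One application of $\tilde\delta$-thinness of the triangle $\id,g,h$ then yields $d(g,h)\geq m-n_{i+1}$ outright, so the minimal admissible value $M_i=\lceil n_{i+1}+L^\ast\rceil$ works. Your approach instead walks along a hypothetical short geodesic from $g$ to $\tilde g$ and telescopes the visual-metric displacement of shadows step by step, which introduces an accumulated error $L^\ast K e^{-\varepsilon n^\ast}$ and forces the extra logarithmic term $\varepsilon^{-1}\log(KL^\ast/d_i)$ in your $M_i$.

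Two practical consequences of the difference are worth flagging. First, the paper records its value of $M_i$ as equation~(\ref{mi}) and records the byproduct $d(\mathcal{U}^i_{B_r^\varepsilon(\xi_0)}\setminus\mathcal{B}_m,(\mathcal{U}^{i+1}_{B_r^\varepsilon(\xi_0)})^c)\geq m-M_i$; both are quoted verbatim in the proof of Theorem~\ref{dirichlet}. Your $M_i$ is larger but would serve equally well there, and your method also yields an analogue of the divergence inequality (replace $m-M_i$ by the distance from $\mathcal{B}_m$ to the level where the telescoped error exceeds $d_i$). Second, your ``anticipated main obstacle'' is milder than you suggest: the ancestor argument from Lemma~\ref{separating_sets} actually shows that \emph{any} $\xi\in S(g_j,R)$ and \emph{any} $\eta\in S(g_{j+1},R)$ lie in a common shadow $S(\bar g,R)$ with $|\bar g|\geq n^\ast-2R-\tilde\delta$, so $d_\varepsilon(\xi,\eta)\leq K e^{-\varepsilon n^\ast}$ holds without any compatibility constraint on the choice of $\xi^{(j)}$; the backward induction is therefore not really needed.
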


\begin{proof}
Similarly as at the end of the proof of lemma \ref{main_lemma}, we may choose $i_0$ so large, that for all $i\geq i_0$, $$\frac{e^{\varepsilon n_{i+1}}}{(i+1)^2}\geq \frac{\pi^2C_4}{2r}.$$ For any such fixed $i\geq i_0$ let $m$ be an integer such that $m\geq n_{i+1}.$

It follows by the definition of cones that if $g\in \mathcal{C}_{B^\varepsilon_{r_i}(\xi_0)}\backslash \mathcal{B}_{m}$, then $S(g,R)\subset B^\varepsilon_{r_i}(\xi_0)$. Let $\tilde \xi \in S(g,R)$ and let $\tilde g\in \tilde{\xi}_{\id}$, such that $|\tilde g|=n_{i+1}$. Then, by proposition \ref{shadow}, $S(\tilde g,R)\subset B^\varepsilon_{r_i+C_4e^{-\varepsilon n_{i+1}}}(\xi_0)$. Let now $h\in (\mathcal{C}_{B_{r_{i+1}}^\varepsilon(\xi_0)})^c\cup \mathcal{B}_{n_{i+1}}$. We will show that $d(g,h)\geq m-n_{i+1}$. In case that $h\in \mathcal{B}_{n_{i+1}}$, by the triangle inequality $d(g,h)\geq m- n_{i+1}$. Assume now that $h\in (\mathcal{C}_{B_{r_{i+1}}^\varepsilon(\xi_0)}\cup \mathcal{B}_{n_{i+1}})^c$. Then $|h|\geq n_{i+1}$ and there exists a $\xi \in S(h,R)$ such that $\xi\notin B_{r_{i+1}}^\varepsilon(\xi_0)$. Let $\tilde h\in \xi_{\id}$ be such that $|\tilde h|=n_{i+1}$. It holds that $d^\varepsilon(\xi,\tilde \xi)\geq d_{i}-C_4e^{-\varepsilon n_{i+1}}$ and it follows by assumption on $i_0$ that $$ \frac{1}{2}(d_{i}-C_4e^{-\varepsilon n_{i+1}})   \geq C_4e^{-\varepsilon n_{i+1}}$$ and by proposition \ref{shadow} that $S(\tilde g,R)\cap S(\tilde h,R)=\varnothing$. In particular, $(\mathcal{U}_\xi \cap \mathcal{U}_{\tilde \xi})\backslash \mathcal{B}_{n_{i+1}}=\varnothing$ so $d(\tilde g,\tilde h)\geq 2R$. Since $R\geq 2\tilde \delta$ it easily follows by $\delta$-hyperbolicity that $$d(g,h)\geq m-n_{i+1}.$$
Defining the number $M_i$ as the smallest integer that satisfies \begin{equation}\label{mi}M_i\geq n_{i+1}+k \left(\frac{D+1/2}{D+1/4}\right) e^{\varepsilon (D+\frac{1}{4}) n_{i+1}}+1,\end{equation} it follows that $d(g,h)\geq k \left(\frac{D+1/2}{D+1/4}\right) e^{\varepsilon (D+\frac{1}{4}) n_{i+1}}+1$, which implies that $g\in \mathcal{U}^{i+1}_{B_r^\varepsilon(\xi_0)}$. This finishes the proof of the lemma.

For later reference we note that it now easily follows from definition \ref{ui_def} that for all $m\geq M_i$, \begin{equation}\label{ui_divergence}d(\mathcal{U}^i_{B_r^\varepsilon(\xi_0)}\backslash \mathcal{B}_{m}, (\mathcal{U}^{i+1}_{B_r^\varepsilon(\xi_0)})^c)\geq m-M_i.\end{equation}

\end{proof}

The next statement, which is also a rather direct consequence of the main lemma from the previous section, implies that the values of minimisers $x^N$ on $\mathcal{U}_{B_r^\varepsilon(\xi_0)}$ are uniformly close to the boundary conditions. %It shows that for $B_r^{\varepsilon}(\xi_0)\subset D_0$ and $g\in \mathcal{U}_{B_r^{\varepsilon}(\xi_0)}$ it holds $x^N_g\in [c_0,c_0+\sigma_0)$, which corresponds to the boundary conditions.

\begin{lemma}\label{values_at_infinity} Let $B_r^{\varepsilon}(\xi_0)\subset D_0$, where $D_0\subset \p G$ is as in proposition \ref{local_minimisers}. Let $r_i$  and $n_i$ be as in definition \ref{rini}, with $n_1$ satisfying (\ref{n1}). Then there exists an integer $N_0$, such that for all $N\geq N_0$ and for all $i$ %with $n_i< N$ and all $\mathcal{U}^i_{B_r^\varepsilon(\xi_0)}$ that 
$$\mathcal{U}^i_{B_r^\varepsilon(\xi_0)}\cap \mathcal{B}_N^{c_1}=\varnothing.$$
\end{lemma}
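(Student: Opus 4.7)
The plan is to argue by contradiction, combining the boundary condition of $x^N$ with Lemma \ref{main_lemma} and the cone growth bound of Proposition \ref{goc}. Suppose that for some $N$ and some $i$, there exists $g \in \mathcal{U}^i_{B^\varepsilon_r(\xi_0)} \cap \mathcal{B}_N^{c_1}$. Write $L := \bigl(\tfrac{D+1/2}{D+1/4}\bigr)k\, e^{\varepsilon(D+1/4)n_i} + 1$ for the defining distance of $\mathcal{U}^i$. Since $g \in \mathcal{U}^i$ forces $|g| \geq n_i + L$, while $g \in \mathcal{B}_N^{c_1} \subset \mathcal{B}_{N+1}$ forces $|g| \leq N+1$, we automatically have $n_i < N$, which places us in the regime where Lemma \ref{main_lemma} is applicable starting from index $i$.

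The key step is to show that the boundary condition forces the connected component $\mathcal{D}^1 \subset \mathcal{B}_N^{c_1}$ containing $g$ to extend far away from $g$. By the triangle inequality, every $h$ with $d(h,g) \leq L-1$ satisfies $d(h,(\mathcal{C}_{B^\varepsilon_{r_i}(\xi_0)})^c) \geq 1$, so $h$ together with all its neighbours lies in $\mathcal{C}_{B^\varepsilon_{r_i}(\xi_0)} \subset \mathcal{C}_{D_0}$; equivalently, $\mathcal{B}_{L-1}(g) \subset (\mathcal{C}_{D_0})^{\inn}$. Because $x^N$ agrees with $\tilde{x}$ on $((\mathcal{B}_N)^{\inn})^c$ and $\tilde{x} \equiv c_0$ on $(\mathcal{C}_{D_0})^{\inn}$, every $h \in \mathcal{B}_{L-1}(g)$ with $|h| = N+1$ automatically lies in $\mathcal{B}_N^{c_0}$, not in $\mathcal{B}_N^{c_1}$. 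Lemma \ref{connected_components} says that $\mathcal{D}^1$ must meet $\p^{\out}\mathcal{B}_N = \mathcal{S}_{N+1}$, and the previous sentence forces the meeting point $p$ to satisfy $d(g,p) \geq L$.

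A simple path in $\mathcal{D}^1$ from $g$ to $p$ then has length at least $L$, and its initial segment inside $\mathcal{B}_{L-1}(g)$ yields at least $L > k\, e^{\varepsilon(D+1/4)n_i}$ distinct vertices, all in $\mathcal{C}_{B^\varepsilon_{r_i}(\xi_0)} \cap \mathcal{B}_N^{c_1}$, verifying the hypothesis of Lemma \ref{main_lemma}. Letting $\iota \geq i$ be the largest index with $n_\iota < N$, so that $n_\iota \geq \tfrac{D+1/4}{D+1/2}N$, the lemma gives
\[
\#\bigl(\mathcal{V}_\iota \cap \mathcal{B}_N^{c_1}\bigr) \;\geq\; k\, e^{\varepsilon(D+1/4)n_\iota} \;\geq\; k\, \exp\!\Bigl(\varepsilon\tfrac{(D+1/4)^2}{D+1/2}N\Bigr).
\]
Meanwhile $\mathcal{V}_\iota \cap \mathcal{B}_N^{c_1} \subset \mathcal{C}_{B^\varepsilon_r(\xi_0)} \cap \mathcal{B}_{N+1}$ has cardinality bounded by $C'' e^{\varepsilon DN}$, obtained by summing the sphere bound of Proposition \ref{goc}. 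Since $\tfrac{(D+1/4)^2}{D+1/2}-D = \tfrac{1}{16(D+1/2)} > 0$, the lower bound strictly outgrows the upper one in $N$, and choosing $N_0$ with $\tfrac{\varepsilon N_0}{16(D+1/2)} > \log(C''/k)$ produces the required contradiction uniformly in $i$.

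The main technical obstacle is the a priori possibility that $|g|$ is close to $N+1$, in which case a short path from $g$ to $\p^{\out}\mathcal{B}_N$ would yield too few points in $\mathcal{C}_{B^\varepsilon_{r_i}(\xi_0)} \cap \mathcal{B}_N^{c_1}$ to trigger Lemma \ref{main_lemma}. The fact that $\mathcal{U}^i$ is defined precisely by a distance threshold matched to the main-lemma constant is what allows the boundary condition to rule out such short paths and to force $\mathcal{D}^1$ to extend by the full amount $L$ before reaching $\p^{\out}\mathcal{B}_N$.
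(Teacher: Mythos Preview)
Your proof is correct and follows essentially the same route as the paper's: assume a point $g\in\mathcal{U}^i\cap\mathcal{B}_N^{c_1}$, use Lemma~\ref{connected_components} together with the boundary data to force a long path inside $\mathcal{B}_N^{c_1}\cap\mathcal{C}_{B^\varepsilon_{r_i}(\xi_0)}$, feed this into Lemma~\ref{main_lemma}, and contradict the ball/cone growth bound. The only notable difference is the final step: the paper re-tunes the auxiliary sequence so that some $\tilde n_\iota=N-1$ exactly, yielding the clean gap $\varepsilon/4$ in the exponent, whereas you take the largest $\iota$ with $n_\iota<N$ and obtain the smaller (but still positive) gap $\varepsilon/(16(D+1/2))$; both give the desired contradiction for $N\ge N_0$ with $N_0$ independent of $i$.
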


\begin{proof}
Let $n_0$ be the smallest number satisfying condition (\ref{n1}) and let $n_i$ be a sequence as in definition \ref{rini} where $n_1= n_0$. Assume that the lemma is not true, i.e. for some $i\geq 1$ and some $N>n_i+ \left(\frac{D+1/2}{D+1/4}\right)ke^{\varepsilon(D+1/4) n_i}+1$ there exists a $g\in \mathcal{U}^i_{B_r^\varepsilon(\xi_0)}\cap \mathcal{B}_N^{c_1}$. By lemma \ref{connected_components}, there exists a point $\tilde g\in \p^{\out}(\mathcal{B}_N^{c_1}\cap \mathcal{S}_{N+1})$ and a path $p_{g,\tilde g}\subset \mathcal{K}$ from $g$ to $\tilde g$, with $(p_{g,\tilde g}\cap G)\subset \mathcal{B}_N^{c_1}$. Since $\p^{\out}(\mathcal{B}_N^{c_1}\cap \mathcal{S}_{N+1})=((\mathcal{C}_{D_0})^{\inn})^c\cap \mathcal{S}_{N+1}$ and because $B_r^\varepsilon(\xi_0)\subset \mathring D_0$, the path $p_{g,\tilde g}$ intersects $\p^{\inn} (\mathcal{C}_{B_{r_i}^\varepsilon(\xi_0)}\backslash \mathcal{B}_{n_i})$. By the definition of $\mathcal{U}_{B_r^\varepsilon(\xi_0)}$ it then follows that $$\# (\mathcal{B}_N^{c_1}\cap \mathcal{C}_{B^\varepsilon_{r_i}(\xi_0)})\geq \# (p_{g,\tilde g}\cap \mathcal{C}_{B^\varepsilon_{r_i}(\xi_0)})\geq  \left(\frac{D+1/2}{D+1/4}\right) k e^{\varepsilon (D+\frac{1}{4}) n_i}.$$ Similarly as in the proof of corollary \ref{thm_transition_sets}, we may choose a sequence $\tilde n_\iota$ with $n_i\leq \tilde n_i\leq \left(\frac{D+1/2}{D+1/4}\right)n_i$ such that $N=\tilde n_\iota+1$. By lemma \ref{main_lemma} it follows for all $\iota\geq i$ with $\tilde n_\iota<N$ that $\#(\mathcal{C}_{B^\varepsilon_{r_\iota}(\xi_0)}\cap \mathcal{B}_N^{c_1})\geq k e^{\varepsilon (D+\frac{1}{4}) \tilde n_{\iota}} $, which, as in corollary \ref{thm_transition_sets}, drives us to a contradiction about the size of $\mathcal{C}_{B^\varepsilon_{r_i}(\xi_0)}\cap \mathcal{B}_N$ when $N\geq N_0$ and $N_0$ is the smallest natural number with $ke^{\frac{\varepsilon}{4} N}> \tilde Ke^{\varepsilon(D+\frac{1}{4})}$.
\end{proof}

Now we are ready to prove our main theorem. The final step of the proof follows ideas from \cite{anticontinuum}. 

\begin{theorem}\label{dirichlet}
Let $j\in \{0,1\}$ and $\xi_j \in \mathring{D}_j$ be a point at infinity. Let $r>0$ be such that $B^\varepsilon_r(\xi_j)\subset D_j$ and let %$\mathcal{U}^i_{B_r^\varepsilon(\xi_0)}$ from definition \ref{ui_def} correspond to 
$n_i$ and $r_i$ be as in (\ref{rini}), such that $n_1$ is the smallest number satisfying (\ref{n1}). Let furthermore $i_0$ and $N_0$ be as given in lemmas \ref{neighborhood} and \ref{values_at_infinity}, respectively, and let $M_{i}$ be given by (\ref{mi}). Then for all $n\geq M_{i_0+1}$, all $N\geq N_0$ and all $g\in \mathcal{C}_{B^\varepsilon_{r_{i_0}}(\xi_j)}\backslash \mathcal{B}_{n}$ $$|x_g^N-c_j|\leq \sigma_0\cdot k^{n-M_{i_0+1}}.$$
%
%Let $\xi_0 \in \mathring{D}_0$ be a point in infinity and define the sequence of numbers $f(n):=\varepsilon^{-1}\cdot e^{-\varepsilon n}$. Let furthermore $\mathcal{B}^\varepsilon_{f(n)}(\xi_0)\subset \overline{\mathcal{K}}$ be the sequence of balls in the visual metric converging to $\xi_0$. Then there exists a $\tilde n>0$, such that it holds for all $n\geq \tilde n$ and all $g\in \mathcal{B}^\varepsilon_{f(n)}(\xi_0)\backslash \p \mathcal{K}$ that uniformly in $N$,
%$$|x_g^N-c_0|\leq \sigma_0\cdot k^{n-\tilde n}.$$

In particular, the global minimiser $\bar x=\lim_{k\to \infty}x^{N_k}$ constructed in proposition \ref{local_minimisers} solves the minimal Cauchy problem at infinity given by $D_0$ and $D_1$ (see definition \ref{dp_def}). 
\end{theorem}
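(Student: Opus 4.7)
The plan is to combine the geometric containment statements from lemma \ref{neighborhood} with lemma \ref{values_at_infinity} to first obtain the uniform estimate $|x^N_g - c_j| \leq \sigma_0$ on a sufficiently thick collar of the cone, and then to promote this to the claimed exponential decay using the contraction property underlying the anti-continuum limit theorem \ref{ac_theorem}.

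First I would apply lemma \ref{values_at_infinity} both in the given form (for $j=0$) and in its symmetric counterpart (for $j=1$, exchanging the roles of $c_0$ and $c_1$) to conclude that whenever $N\geq N_0$, the neighbourhoods $\mathcal{U}^i_{B^\varepsilon_r(\xi_j)}$ do not meet $\mathcal{B}_N^{c_{1-j}}$. Together with corollary \ref{gm_range}, this forces $|x^N_g-c_j|\leq \sigma_0$ for all $g\in \mathcal{U}^i_{B^\varepsilon_r(\xi_j)}\cap \mathcal{B}_{N+1}$. By lemma \ref{neighborhood}, the truncated cone $\mathcal{C}_{B^\varepsilon_{r_{i_0}}(\xi_j)}\setminus \mathcal{B}_{M_{i_0}}$ is contained in $\mathcal{U}^{i_0+1}_{B^\varepsilon_r(\xi_j)}$ and, more usefully, the deeper truncation $\mathcal{C}_{B^\varepsilon_{r_{i_0+1}}(\xi_j)}\setminus \mathcal{B}_{M_{i_0+1}}$ lies in $\mathcal{U}^{i_0+2}_{B^\varepsilon_r(\xi_j)}$, so already we know that $|x^N-c_j|$ is at most $\sigma_0$ along the full boundary $\partial^{\text{f}}\mathcal{U}^{i_0+1}_{B^\varepsilon_r(\xi_j)}$.

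Second, to upgrade this to exponential decay I would use the local contraction behind the Newton iteration in the proof of theorem \ref{ac_theorem}. Writing $v:=x^N-c_j$, the Allen--Cahn equation (\ref{rr}) can be rearranged, using that $V''(c_j)>0$ and $V\in C^2$, as a fixed point relation $v_g=\rho\, F_g(v)$ where $F_g$ depends only on $v_{gs}$ for $s\in S$ and the value $v_g$ itself. Shrinking $\rho_1$ further if necessary, the arguments of the appendix produce a constant $0<k<1$ (which overrides the earlier use of the symbol $k$ in (\ref{k})) with the property that, whenever $\|v\|_\infty\leq \sigma_0$ on a set and on its outer boundary, one has at every interior point $g$
\[ |v_g|\leq k\,\max_{s\in S}|v_{gs}|. \]
Iterating this inequality along a word-geodesic from $g\in \mathcal{C}_{B^\varepsilon_{r_{i_0}}(\xi_j)}\setminus\mathcal{B}_n$ outwards, and using the distance estimate (\ref{ui_divergence}) together with the inclusion $\mathcal{U}^{i_0+1}_{B^\varepsilon_r(\xi_j)}\setminus\mathcal{B}_n \subset \mathcal{U}^{i_0+2}_{B^\varepsilon_r(\xi_j)}$, I can bootstrap the error at $g$ against the bound $\sigma_0$ on $\partial^{\text{f}}\mathcal{U}^{i_0+2}_{B^\varepsilon_r(\xi_j)}$ through $n-M_{i_0+1}$ nested layers, which yields exactly $|v_g|\leq \sigma_0\, k^{n-M_{i_0+1}}$.

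For the Dirichlet problem at infinity, given $\xi\in \mathring{D}_j$ and $\varepsilon_0>0$, I would choose $r>0$ small so that $B^\varepsilon_r(\xi)\subset D_j$, and then pick $n$ large enough that $\sigma_0 k^{n-M_{i_0+1}}<\varepsilon_0$. By lemma \ref{topology} the truncated cone $\mathcal{C}_{B^\varepsilon_{r_{i_0}}(\xi)}\setminus\mathcal{B}_n$ contains $\mathcal{B}^\varepsilon_{c^{-1}r_{i_0}}(\xi)\cap G$ for an appropriately enlarged $n$, which is a visual-metric neighbourhood of $\xi$ in $G\cup\partial G$ intersected with $G$. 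Passing the uniform bound to the pointwise limit $\bar x$ along the subsequence $N_k\to\infty$ from proposition \ref{local_minimisers} yields $|\bar x_g-c_j|<\varepsilon_0$ on this neighbourhood, which is exactly the requirement of definition \ref{dp_def}. The main obstacle in this plan is the pointwise contraction inequality with a geometric rate $k<1$ independent of $N$ and the size of the domain, which requires carefully extracting and iterating the Lipschitz contraction in the anti-continuum Newton fixed point argument from the appendix; once this estimate is in hand, the nested structure of the neighbourhoods $\mathcal{U}^i$ supplies the correct exponent $n-M_{i_0+1}$ essentially for free.
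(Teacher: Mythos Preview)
Your plan matches the paper's proof almost exactly: you invoke lemmas \ref{neighborhood} and \ref{values_at_infinity} and the distance estimate (\ref{ui_divergence}) in the same way, and then feed the resulting distance $n-M_{i_0+1}$ into the anti-continuum contraction to obtain the geometric rate $k^{n-M_{i_0+1}}$, finishing with lemma \ref{topology} to pass to the visual metric. The only difference is in how the exponential decay is extracted. The paper does not derive your pointwise inequality $|v_g|\leq k\max_{s}|v_{gs}|$ from the equation; instead it uses the quasi-Newton operator $K_{\rho,x^0}$ of the appendix directly: since $K_{\rho,x^0}$ has range one, $K^m_{\rho,x^0}(x^0)_g=c_j$ whenever $x^0\equiv c_j$ on the $m$-ball around $g$, and then $\|x^\rho-K^m_{\rho,x^0}(x^0)\|_\infty\leq \sigma_0 k^m$ gives the bound at once. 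Your route is equally valid (rearranging the equation at a nondegenerate minimum of $V$ does give such a pointwise contraction for $\rho$ small), but note that the phrase ``iterating along a word-geodesic outwards'' is not quite right: the maximum over $s\in S$ need not pick the outward neighbour, so what one really gets is $|v_g|\leq k^m\max_{d(h,g)\leq m}|v_h|\leq k^m\sigma_0$, i.e.\ exactly the locality-plus-contraction argument the paper uses. Your later phrase ``through $n-M_{i_0+1}$ nested layers'' shows you have the correct picture; just drop the geodesic language.
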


\begin{proof}
We prove the theorem for $j=0$ only, since the other case follows analogously. For any $\xi_0\in \mathring{D}_0$ and $r>0$ as in the statement of the theorem let $\mathcal{U}^i_{B_r^\varepsilon(\xi_0)}$ correspond to $n_i$ and $r_i$, which are again as stated in the theorem. Since $i\geq i_0$, lemma \ref{neighborhood} holds and it follows for all $n\geq M_{i+1}$ that  $$\mathcal{C}_{B^\varepsilon_{r_{i}}(\xi_0)}\backslash \mathcal{B}_{n}\subset \mathcal{U}^{i+1}_{B_r^\varepsilon(\xi_0)}\backslash \mathcal{B}_{n}\subset \mathcal{C}_{B^\varepsilon_{r_{i+1}}(\xi_0)}\backslash \mathcal{B}_{M_{i+1}}\subset \mathcal{U}^{i+2}_{B_r^\varepsilon(\xi_0)},$$ and by (\ref{ui_divergence}) it follows that $$d(\mathcal{C}_{B^\varepsilon_{r_{i}}(\xi_0)}\backslash \mathcal{B}_{n}, (\mathcal{U}^{i+2}_{B_r^\varepsilon(\xi_0)})^c)\geq n-M_{i+1}. $$ Furthermore, $|x_g^N-c_0|\leq \sigma_0$ for any $g\in \mathcal{U}^{i+2}_{B_r^\varepsilon(\xi_0)}$ and any $N\geq N_0$ by lemma \ref{values_at_infinity}.

Recall from theorem \ref{gm_continuations} that for each $N$, $ x^N= x^{\rho}$ for some solution $x:=x^{0}$ of the anti-continuum limit (\ref{ah}), for which $x_g\in \{c_0,c_1\}$ for all $g\in G$. The solution $ x^\rho$ is obtained in theorem \ref{ac_theorem} as a fixed point of a quasi-Newton contraction operator $$K_{\rho,x}(X)_g=X_g - \frac{V'(X_g) + \mathbb{1}_{\mathcal{B}_N}(\rho \Delta_g(X))}{V''(x_g)}$$ with contraction constant $k<1$ on the $\sigma_0$-ball around $x$ in the supremum norm so it follows that $$\|x^\rho - K^m_{\rho,x}(x)\|_{\infty}\leq \sigma_0 k^{m}.$$ Since $\Delta_g(x)$ has range at most one, it follows that $K^m_{\rho,x}(x)_g=c_0$ if $d(\tilde g,\{x_g=c_1\})>m$. By lemma \ref{values_at_infinity} it holds that $x_g=c_0$ for all $g\in \mathcal{U}_{B_r^\varepsilon(\xi_0)}$ and by the inequalities above and the discussion in lemma \ref{values_at_infinity} it follows that $$|x^N_g - c_0|\leq \sigma_0 k^{n-M_{i+1}} \ \text{ for all } \ g\in \mathcal{C}_{B^\varepsilon_{r_{i}}(\xi_0)}\backslash \mathcal{B}_{n}.$$ In particular, for every $\epsilon>0$ there exists a $\tilde n\in \N$, such that $|x_g^N-c_0|\leq \epsilon$ for all $N\in \N$ and for all $g\in \mathcal{C}_{B^\varepsilon_{r_{i_0}}(\xi_0)}\backslash \mathcal{B}_{\tilde n}$, so by definition of $\bar x$ the same holds for such a global minimiser. 

Finally, it is clear by lemma \ref{topology} that convergence with respect to truncated cones $\mathcal{C}_{B^\varepsilon_{r_{i_0}}(\xi_0)}\backslash \mathcal{B}_{\tilde n}$ is equivalent to uniform convergence with respect to the visual metric, which finishes the proof.

\end{proof}

\section{The asymptotic Plateau problem}\label{plateau_section}

%The asymptotic Plateau problem for a riemannian manifold poses the question of finding area minimising submanifolds with prescribed asymptotic behaviour. In the Euclidean case, such solutions are fairly restricted, as showed by the work on de Giorgi conjecture. Indeed, in dimension smaller than nine, such solutions are all affine hyperplanes (see \cite{delpino2} for an overview of related results). For hyperbolic manifolds, the class of solutions is vastly greater, where the first results were obtained by Anderson for the hyperbolic space $\mathbb{H}^n$ in \cite{anderson83a}. For Gromov hyperbolic manifolds the existence of solutions to the asymptotic Plateau problem was proved by Lang in \cite{lang03}, based on the previous work on a Morse-type lemma for quasi-minimal hypersurfaces in \cite{bangert_lang}. %Coskunuzer addresses questions of uniqueness for three dimensional manifolds in \cite{coskunuzer}.
%In the case of codimension one, absolutely minimal submanifolds are closely related to globally minimal solutions of the Allen-Cahn equation, and are obtained by $\Gamma$-convergence with the parameter $\rho\to 0$. In the Euclidean setting, this was conjectured by de Giorgi and proved by Modica in \cite{modica} (see also \cite{delpino}). The same approach was also taken for the hyperbolic space in \cite{italians}.

In this section we prove a version of the asymptotic Plateau problem for the group $G$. In view of that, let us quantify the size of a boundary of a set $\mathcal{B}\subset G$ by counting all pairs of points $g,gs$ such that $g\in \mathcal{B}$ and $gs \notin \mathcal{B}$, restricted to a finite subset of $G$. More precisely, we define for every set $\mathcal{B}\subset G$ and every finite set $\Omega\subset G$ the function $$b_\Omega(\mathcal{B}):=\#\{(g,gs)\in \Omega\times \Omega \ | \ g\in \mathcal{B}, gs \notin \mathcal{B}\}.$$ One can view $b(\mathcal{B})$ as the number of edges in the the Cayley graph $C(G,S)$ which connect $B$ to its complement. 

\begin{definition}\label{pp_def}
Let $D_0\subset \p G$ and denote $D_1:=\overline{(D_0)^c}$. Assume that $\overline{\mathring{D}}_0=D_0$ (so that also $\overline{\mathring{D}}_1=D_1$ and $\p D_0=\p D_1$). We say that a nonempty set $\mathcal{T}_{D_0}\subset G$ %nonempty because not subset $\overline{G}$
solves the asymptotic Plateau problem with respect to $D_0$ if there exist sets $\mathcal{D}_0\subset G$ and $\mathcal{D}_1\subset G$ such that $\mathcal{D}_0\cap\mathcal{D}_1=\varnothing$, $\mathcal{D}_0\cup \mathcal{D}_1=G$ and $\mathcal{T}_{D_0}=\p^{\out} \mathcal{D}_0\cup \p^{\out} \mathcal{D}_1$ has the following properties. 
\begin{itemize}
\item %The set $\mathcal{B}_0$ is asymptotic to $D\subset \p G$ and $\mathcal{B}_1$ to $D^c\subset\p G$, i.e. for any ray $\gamma\subset C(G,S)$ with $\gamma\cap G\subset \mathcal{B}_0$, it holds that $[\gamma]\in D$ and similarly for $\mathcal{B}_1$.
Every path $p_{\xi,\eta}\subset C(G,S)$ with $\xi\in \mathring{D}_0$ and $\eta \in \mathring{D}_1$ intersects $\mathcal{T}_{D_0}$.
\item For every two finite set $\Omega \subset G$ and $\tilde{\mathcal{B}}\subset G$ with $\tilde{\mathcal{B}}\cap ({\Omega}^{\inn})^c=\mathcal{D}_0\cap ({\Omega}^{\inn})^c$, $$b_\Omega(\mathcal{D}_0)\leq b_\Omega(\tilde{\mathcal{B}}) \ \text{ and } \ b_\Omega(\mathcal{D}_1)\leq b_\Omega(\tilde{\mathcal{B}}).$$
\end{itemize}
\end{definition}

We use the solutions to the minimal Dirichlet problem for the Allen-Cahn equation to prove the following.

\begin{theorem}\label{plateau}
For every set $D_0\subset \p G$ such that $\overline{\mathring{D}}_0=D_0$ there exists a set $\mathcal{T}_{D_0}\subset G$ solving the asymptotic Plateau problem with respect to ${D_0}$.
\end{theorem}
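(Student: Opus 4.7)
The plan is to obtain $\mathcal{T}_{D_0}$ as a $\Gamma$-limit of the transition sets of the Allen--Cahn minimisers produced by Theorem \ref{dirichlet} as $\rho\to 0$. Pick a sequence $\rho_n\to 0$ with $\rho_n\le\rho_1$ for all $n$ and, for each $n$, let $\bar x^{\rho_n}$ be a global minimiser of (\ref{rr}) with Laplace constant $\rho_n$ solving the minimal Dirichlet problem at infinity for $D_0$ and $D_1$. By Corollary \ref{gm_range}, each $\bar x^{\rho_n}$ takes values in $[c_0,c_0+\sigma_0)\cup(c_1-\sigma_0,c_1]$, so we may set
\[
\mathcal{D}_0^n:=\{g\in G\,|\,\bar x^{\rho_n}_g\in[c_0,c_0+\sigma_0)\},\qquad \mathcal{D}_1^n:=G\setminus \mathcal{D}_0^n.
\]
The space $\{0,1\}^G$ of indicator functions is compact in the product topology, so after passing to a subsequence (still indexed by $n$) the indicators $\chi_{\mathcal{D}_0^n}$ converge pointwise to the indicator of a set $\mathcal{D}_0\subset G$; put $\mathcal{D}_1:=G\setminus \mathcal{D}_0$ and $\mathcal{T}_{D_0}:=\p^{\out}\mathcal{D}_0\cup\p^{\out}\mathcal{D}_1$.

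The crucial point is that the quantitative decay in Theorem \ref{dirichlet} is uniform in $\rho_n$: the integers $M_i$ and the constants from the main lemma depend only on the geometry of $G$ and on the size of balls contained in $D_j$, not on $\rho$. Consequently, for every $\xi\in\mathring{D}_j$ there is a neighbourhood $\mathcal{O}_\xi\cap G$ on which $\bar x^{\rho_n}_g\to c_j$ uniformly in $n$, which forces $\mathcal{O}_\xi\cap G\subset \mathcal{D}_j$ for all large $n$. In particular $\mathcal{D}_0$ and $\mathcal{D}_1$ are both non-empty and, for any path $p_{\xi,\eta}\subset C(G,S)$ from a point of $\mathring{D}_0$ to a point of $\mathring{D}_1$, the path must contain vertices of both $\mathcal{D}_0$ and $\mathcal{D}_1$ and therefore crosses an edge in $\mathcal{T}_{D_0}$. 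This delivers the separation condition of Definition \ref{pp_def}.

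For the minimality condition, fix a finite $\Omega\subset G$ and a set $\tilde{\mathcal{B}}\subset G$ coinciding with $\mathcal{D}_0$ outside $\Omega^{\inn}$. For each large $n$, define the competing function $\tilde x^{\rho_n}:G\to\R$ by replacing $\bar x^{\rho_n}|_{\Omega^{\inn}}$ with the unique anti-continuum continuation obtained from Theorem \ref{ac_theorem} starting from $c_j$ on $\tilde{\mathcal{B}}$ and $c_{1-j}$ on its complement (this is well defined because $\tilde{\mathcal{B}}$ already agrees with $\mathcal{D}_0^n$ off $\Omega^{\inn}$ for large $n$). A direct computation, identical in spirit to the boundary estimates already used in the proofs of Lemmas \ref{connected_components} and \ref{ts_estimate}, yields
\[
W^{\rho_n}_{\Omega^{\out}}(\bar x^{\rho_n})-W^{\rho_n}_{\Omega^{\out}}(\tilde x^{\rho_n})=\tfrac{\rho_n}{4}(c_1-c_0)^2\bigl(b_\Omega(\mathcal{D}_0^n)-b_\Omega(\tilde{\mathcal{B}})\bigr)+O(\rho_n^{3/2}),
\]
where the error, coming from the $O(\sigma_0)$ smoothing of both $\bar x^{\rho_n}$ and $\tilde x^{\rho_n}$ on $\Omega^{\out}$, is uniform in $n$ thanks to Theorem \ref{ac_theorem}. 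Since $\bar x^{\rho_n}$ is a minimiser on $\Omega$, the left hand side is $\le 0$, so dividing by $\rho_n/4(c_1-c_0)^2$ and letting $n\to\infty$ gives $b_\Omega(\mathcal{D}_0)\le b_\Omega(\tilde{\mathcal{B}})$; the same argument swapping the roles of $\mathcal{D}_0$ and $\mathcal{D}_1$ yields $b_\Omega(\mathcal{D}_1)\le b_\Omega(\tilde{\mathcal{B}})$.

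The main obstacle is the last step: controlling the error in the expansion of the action difference. The heuristic is that each edge across the transition contributes $\tfrac{\rho_n}{4}(c_1-c_0)^2$ at leading order, while all other contributions are either independent of the configuration (the $V(c_j)$ terms cancel outside the transition) or can be absorbed into a uniform $O(\rho_n^{3/2})$ term using the sup-norm estimates from the anti-continuum limit. Making this quantitative, and verifying that the comparison passes cleanly to the pointwise limit (rather than just the $\liminf$), is the delicate part and will require combining the Lipschitz estimate in Theorem \ref{ac_theorem} with a local uniform convergence argument on the finite set $\Omega^{\out}$.
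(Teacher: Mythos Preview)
Your overall strategy---let $\rho\to 0$ through minimisers of the Dirichlet problem, extract a limiting partition $\mathcal{D}_0\sqcup\mathcal{D}_1$, and use the uniformity in $\rho$ of the constants in Section~\ref{DP} for the separation property---is exactly the paper's. The differences, and the gap, are in the minimality step.

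\medskip

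\textbf{The competitor.} Your $\tilde x^{\rho_n}$, defined as an anti-continuum continuation via Theorem~\ref{ac_theorem}, is both unnecessary and ill-posed for the comparison: whatever set $\mathcal{B}$ you feed into Theorem~\ref{ac_theorem}, the resulting function equals the two-valued seed (not $\bar x^{\rho_n}$) on $G\setminus\mathcal{B}^{\inn}$, so it is not a compactly supported variation of $\bar x^{\rho_n}$ and the inequality $W^{\rho_n}_\Omega(\bar x^{\rho_n})\le W^{\rho_n}_\Omega(\tilde x^{\rho_n})$ is not justified by Definition~\ref{def_gm}. The paper simply takes the bare two-valued function $\tilde y$ ($c_0$ on $\tilde{\mathcal B}$, $c_1$ off it); its action is then computed exactly, with no error term on that side, and the minimiser's action is bounded below by $\tfrac{\rho}{2}(c_1-c_0)^2(1-2\sigma)^2\, b_\Omega(\mathcal D_0^{\rho})$ just by dropping the (nonnegative) potential terms and the non-transition gradient terms.

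\medskip

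\textbf{Avoiding the limit.} Your asserted expansion with remainder $O(\rho_n^{3/2})$ is not proved, and the exponent is not obviously right (what Theorem~\ref{ac_theorem} actually gives is $\sigma_n=O(\rho_n)$, cf.\ the appendix, which would yield an $O(\rho_n^2)$ error; this \emph{would} suffice after dividing by $\rho_n$, but you have to argue it). The paper sidesteps this entirely with a discreteness trick: $b_\Omega$ is integer-valued and, for $\Omega\subset\mathcal B_n$, bounded by $\hat C e^{hn}$. From the crude comparison one gets
\[
(1-2\sigma_n)^2\, b_\Omega(\mathcal D_0^{\rho_n})\ \le\ b_\Omega(\tilde{\mathcal B}).
\]
Choosing $\rho_n$ so small that $(1-2\sigma_n)^2>\dfrac{\hat C e^{hn}}{\hat C e^{hn}+1}$ rules out any strict integer gap, hence $b_\Omega(\mathcal D_0^{\rho_n})\le b_\Omega(\tilde{\mathcal B})$ \emph{exactly}, for each $n$ separately. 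One then passes to a further subsequence along which $\mathcal D_0^{\rho_n}|_{\mathcal B_n}=\mathcal D_0|_{\mathcal B_n}$ (pointwise convergence on a finite set stabilises), and the inequality transfers to $\mathcal D_0$ with no limiting argument in the variational step at all. This is what makes the ``delicate part'' you flag disappear.
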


\begin{proof}
Let $\rho\leq \rho_1$ as in section \ref{anticontinuum} and let $x^\rho$ solve the minimal Dirichlet problem at infinity for the sets $D_0$ and $D_1$, as obtained in theorem \ref{dirichlet}. For every $\rho\leq \rho_1$ define the set $$\mathcal{D}_0^\rho:=\{g\in G \ | \ x^\rho_g\in [c_0, c_0+\sigma_0)\} \ \text{ and } \ \mathcal{D}_1^\rho:=\{g\in G \ | \ x^\rho_g\in (c_1-\sigma_0,c_1]\} $$ and note for later reference that for every finite set $\Omega\subset G$, $b_\Omega(\mathcal{D}^\rho_0)=b_\Omega(\mathcal{D}^\rho_1)$ by theorem \ref{gm_continuations}. According to theorem \ref{ac_theorem}, for every $\rho\leq \rho_1$, a corresponding constant $\sigma\leq \sigma_0$ exists, such that $\|x^\rho-x\|_\infty\leq \sigma$, and such that $\sigma\to 0$ with $\rho\to 0$. Hence, we may fix a decreasing sequence $\rho_1\geq \rho_n\to 0$ such that the corresponding sequence of positive reals $\sigma_n$ is decreasing and such that \begin{equation}\label{condition1}\frac{\hat C e^{hn}}{\hat C e^{hn}+1}<(1-2\sigma_n)^2,\end{equation} where $\hat C:=(\# S) \cdot \tilde C$ and $\tilde C$ is as in (\ref{cannon_improved}). Letting $n\to \infty$ and with that $\rho_n\to 0$, we obtain a sequence of solutions $x^{\rho_n}$ to equations given by (\ref{rr}) and constants $\rho=\rho_n$. By Tychonoff's theorem the set $[c_0,c_1]^G$ is a compact set in the product topology, so there exists a subsequence of $x^{\rho_n}$, converging point-wise to a function $\bar{x}:G\to [c_0,c_1]$. Since $\sigma_n\to 0$, it follows for every $g\in G$, that $\bar{x}_g\in \{c_0,c_1\}$. This allows us to define the sets $$\mathcal{D}_0:=\{g\in G \ | \ \bar{x}_g=c_0\} \ \text{ and } \ \mathcal{D}_1:=\{g\in G \ | \ \bar{x}_g=c_1\}$$ such that $\mathcal{D}_0\cap\mathcal{D}_1=\varnothing$ and $\mathcal{D}_0\cup\mathcal{D}_1=G$. Let $\mathcal{T}_{D_0}:=\p^{\out} \mathcal{D}_0\cup \p^{\out} \mathcal{D}_1$. 

In fact, since for any $m\in \N$ the sequence of functions $x^{\rho_n}$ converges uniformly to $\bar x$ on the ball $\mathcal{B}_m\subset G$, there exists for any $m\in \N$ an integer $\tilde m\in \N$ such that the sets $\mathcal{D}_0^{\rho_n}\cap \mathcal{B}_m$ stabilise for $n\geq \tilde m$. More precisely, $\mathcal{D}_0^{\rho_n}|_{\mathcal{B}_m}=\mathcal{D}_0|_{\mathcal{B}_m}$ for all $n\geq \tilde m$. By taking a subsequence of $\rho_n$, we may thus assume that \begin{equation}\label{condition2}\mathcal{D}_0^{\rho_{\tilde n}}|_{\mathcal{B}_n}=\mathcal{D}_0|_{\mathcal{B}_n} \ \text{for all $n \in \N$ and for all $\tilde n\geq n$.}\end{equation} This obviously implies that also $\mathcal{D}_1^{\rho_{\tilde n}}|_{\mathcal{B}_n}=\mathcal{D}_1|_{\mathcal{B}_n}$ for all $n$ and that $$\mathcal{T}_{D_0}=\lim_{n\to \infty}(\p^{\out} \mathcal{D}_0^{\rho_n}\cup \p^{\out} \mathcal{D}_1^{\rho_n}) \ .$$ 

To confirm that $\mathcal{T}_{D_0}\neq \varnothing$ and that it satisfies the first condition in definition \ref{pp_def}, we first note that all of the constants that appear in section \ref{dp_section} are independent of $\rho$. For any $\xi \in \mathring{D}_0$ and $\eta \in \mathring{D}_1$ we can choose an $r>0$ such that $B^\varepsilon_r(\xi)\subset D_0$ and $B^\varepsilon_r(\eta)\subset D_1$. By theorem \ref{dirichlet}, there exist independently of the constant $\rho$ neighbourhoods $\mathcal{U}_\xi, \mathcal{U}_\eta \subset G$ such that $x^\rho_g\in [c_0,c_0+\sigma)$ for all $g\in \mathcal{U}_\xi$ and $x^\rho_g\in (c_0-\sigma,c_1]$ for all $g\in \mathcal{U}_\eta$. For any such set $\mathcal{U}_\xi, \mathcal{U}_\eta \subset G$ and  for all $n\in \N$, we have $\mathcal{U}_\xi\subset \mathcal{D}_0^{\rho_n}$ and $\mathcal{U}_\eta\subset \mathcal{D}_1^{\rho_n}$. Let $p_{\xi,\eta}\subset C(G,S)$ be any path connecting $\xi$ to $\eta$. Then there exists an $n\in \N$ such that outside of $\mathcal{U}_\eta$ and $\mathcal{U}_\xi$ the path $p_{\xi,\eta}$ is contained in the ball of size $n$, i.e. $(p_{\xi,\eta}\cap G)\backslash (\mathcal{U}_\eta\cup\mathcal{U}_\eta)^{\inn}\subset \mathcal{B}_n$. By condition \ref{condition2} it then holds that $\mathcal{T}_{D_0}|_{\mathcal{B}_n}=(\p^{\out} \mathcal{D}_0^{\rho_n}\cup \p^{\out} \mathcal{D}_1^{\rho_n})|_{\mathcal{B}_n}$ and so clearly $p_{\xi,\eta}\cap \mathcal{T}_{D_0}\cap \mathcal{B}_n\neq \varnothing$.

To investigate the second condition, let $\Omega\subset \mathcal{B}_n\subset G$ be a fixed finite set and observe that for any positive $\rho\leq \rho_1$, \begin{equation}\label{estimate1}W^\rho_\Omega(x)\geq \sum_{g\in \Omega} \frac{\rho^2}{2} \sum_{s\in S}(x_{gs}-x_g)^2\geq \frac{\rho^2}{2} (c_1-c_0)^2(1-2\sigma)^2 b_\Omega(\mathcal{D}_0^\rho) .\end{equation} Let now $\tilde{\mathcal{B}}$ be such that $\tilde{\mathcal{B}}\cap ({\Omega}^{\inn})^c =\mathcal{D}_0^\rho \cap ({\Omega}^{\inn})^c$ and define the function $\tilde{y}$ by $\tilde{y}_g:=c_0$ if $g\in \tilde{\mathcal{B}}$ and $\tilde{y}_g:=c_1$ if $g\notin \tilde{\mathcal{B}}$. For every positive $\rho\leq \rho_1$ it then easily follows $$W_\Omega^\rho(\tilde{y})= \frac{\rho^2}{2} (c_1-c_0)^2 b_\Omega(\tilde{\mathcal{B}}).$$ Since $x^\rho$ is a minimiser with respect to compact variations, $W^\rho_\Omega(x)\leq W^\rho_\Omega(\tilde y)$, and it follows from estimate (\ref{estimate1}) that \begin{equation}\label{ineq1}(1-2\sigma)^2 b_\Omega(\mathcal{D}_0^\rho)\leq b_\Omega(\tilde{\mathcal{B}}).\end{equation}

Since $\Omega\subset \mathcal{B}_n$, $\#\Omega\leq \tilde C e^{hn}$ by (\ref{cannon_improved}) and it follows for every set $\tilde{\mathcal{B}}$ that $b_\Omega(\tilde{\mathcal{B}})\leq \hat C e^{hn}$. Assume now that $\tilde{\mathcal{B}},\hat{\mathcal{B}}\subset G$ are sets such that $\tilde{\mathcal{B}}\cap ({\Omega}^{\inn})^c=\hat{\mathcal{B}}\cap ({\Omega}^{\inn})^c =\mathcal{D}_0^\rho \cap ({\Omega}^{\inn})^c$ and assume that $b_\Omega(\tilde{\mathcal{B}})$ is minimal among all sets with such boundary behaviour with respect to $\Omega$. In case $b_\Omega(\tilde{\mathcal{B}})< b_\Omega(\hat{\mathcal{B}})$ it follows since $ \frac{n}{n+1}$ is a monotone increasing function for $n\in \N$ that \begin{equation}\label{ineq2}\frac{b_\Omega(\tilde{\mathcal{B}})}{b_\Omega(\hat{\mathcal{B}})}\leq \frac{b_\Omega(\tilde{\mathcal{B}})}{b_\Omega(\tilde{\mathcal{B}})+1}\leq \frac{\hat C e^{hn}}{\hat C e^{hn}+1}\ .\end{equation}

Inequality (\ref{ineq1}) and condition (\ref{condition1}) then imply that $$\frac{b_\Omega(\tilde{\mathcal{B}})}{b_\Omega(\mathcal{D}_0^{\rho_n})}\geq (1-2\sigma_n)^2\geq \frac{\hat C e^{hn}}{\hat C e^{hn}+1}$$ and it thus follows by the discussion leading to (\ref{ineq2}) that $b_\Omega(\tilde{\mathcal{B}})\geq b_\Omega(\mathcal{D}_0^{\rho_n})$. Moreover, since $\Omega\subset \mathcal{B}_n$, it follows from condition (\ref{condition2}) that $b_\Omega(\mathcal{D}_0^{\rho_n})=b_\Omega(\mathcal{D}_0)$, so also $b_\Omega(\tilde{\mathcal{B}})\geq b_\Omega(\mathcal{D}_0)$, which implies that $\mathcal{T}_{D_0}$ satisfies also the second condition of definition \ref{pp_def} and finishes the proof.
 
\end{proof}

\begin{remark}
If $\mathcal{T}_{D_0}\subset G$ solves the asymptotic Plateau problem with respect to $D_0$, then every path $p_{\xi,\eta}\subset C(G,S)$ with $\xi\in \mathring{D}_0$ and $\eta \in \mathring{D}_1$ intersects $\mathcal{T}_{D_0}$ and, moreover, this intersection is finite.  
\end{remark}

\begin{remark}
If $\mathcal{T}_{D_0}\subset G$ solves the asymptotic Plateau problem with respect to $D_0$, then all connected components of $\mathcal{D}_0$ and $\mathcal{D}_1$ are infinite. If not, there would be a finite connected component contained in a finite ball $\mathcal{B}_n$ given by the transition set of the minimal solution $x^{\rho_n}$ as in the proof of theorem \ref{plateau}, which contradicts lemma \ref{connected_components}. 
\end{remark}

\newpage

%{\Large Notation:}
%\begin{itemize}
%\item $\mathcal{K}$ Cayley graph $g\in \mathcal{K}$, $\xi, \eta \in \p\mathcal{K}$
%\item $\delta$ the hypebolicity constant, 
%\item $\varepsilon$ for the visual metric, 
%\item $\nu$ the measure
%\item $\sigma$, $\rho$ for the antiintegrable limit
%\item $h(G), h$ entropy, $D$ the dimension
%\item $C_i$ constants concerned with growth of the group and shadows
%\item $k_i$ constants for the analysis of minimisers
%\item $B_r^\varepsilon(\xi_0)\subset \p \mathcal{K}$ metric ball and $\mathcal{C}_{B_r^\varepsilon(\xi_0)}\subset \mathcal{K}$ corresponding cone
%\item $A_r^t(\xi_0)\subset \p\mathcal{K}$ annulus and $\mathcal{A}_r^t(\xi_0)$ the corresponding boundary
%\item $R$ for the shadow $S(g,R)$
%\item $n$, $N$ for the balls and spheres
%\item $r$ for radii in $\p\mathcal{K}$
%\item $\mathcal{B}\subset \mathcal{K}$ for general subsets
%\item $\mathcal{B}_n$ balls and $\mathcal{S}_n$ spheres in $\mathcal{K}$
%\end{itemize}
%
%\newpage
\appendix
\section{Proof of theorem \ref{ac_theorem}}
%\begin{theorem}[Continuation] \label{continuation} 
%Let $x:\mathcal{K}\to\R$ be a solution to (\ref{ah}).
%Then there exist an $\rho_0>0$ and a $\sigma_0>0$, such that for every $0\leq \rho\leq \rho_0$ there is a unique solution $x^\rho$ of (\ref{rr}) that satisfies $\|x^\rho-x\|_{\sup}\leq \sigma_0$. It moreover holds that $\lim_{\rho\searrow 0} ||x^\rho-x||_{\infty}=0$.
%\end{theorem}
\begin{proof} Observe that a function $X:G\to \R$ is a solution to (\ref{rr}) on $\mathcal{B}^{\inn}$ with given boundary data if and only if $F(X, \rho)=0$, where the function $F:l_\infty(G)\times \R \to l_\infty(G)$ is defined by 
$$ F(X, \rho)_g:=V'(X_g)+\mathbb{1}_{\mathcal{B}^{\inn}} (\rho\Delta_g(X)).$$
In particular, $F(x,0)=0$ for any $x$ that solves (\ref{ah}). One now wants to apply the implicit function theorem to conclude the existence of a family  $x^\rho$ near $x$ with $F(x^\rho, \rho)=0$. %We will explicitly construct and investigate the contraction operator $K_{\rho, x}$ that is used to find $x(\rho)$. 

 %As a consequence, the continuous functions $\p_{i}S_j$ are bounded on this set. In view of assumption {\bf B} it holds that $\p_{i-k}S_j(\tau_{k,l}x) = \p_{i}S_{j+k}(x)$ and therefore we have that there exists a uniform constant $C_1>0$ for which
%$$|\p_{i}S_j(X)| \leq C_1/(2r+1)^d \ \mbox{for all}\ i,j\ \mbox{and all}\ X \ \mbox{with}\ {\rm osc}(X)\leq K+1\, .$$
%This estimate in turn implies for every $X$ with ${\rm osc}(X)\leq K+1$ that
%$$|R_i(X)| \leq \sum_{||j-i||\leq r} |\p_i S_j(X)| \leq C_1 \, .$$ 
%This proves that $||R(X)||_{\infty}<\infty$ if ${\rm osc}(X)$ is finite. Because ${\rm osc}(x)\leq K$ and ${\rm osc}(X)\leq {\rm osc }(x)+{\rm osc}(X-x) \leq  {\rm osc }(x)+ 2||X-x||_{\infty}$, this implies in particular that $||F(X, \rho)||_{\infty}<\infty$ whenever $||X-x||_{\infty}< \infty$. In other words, each one of the operators $F(\cdot, \rho)$ maps the collection $$x+l_{\infty} = \{X\in\R^{\Z^d}\ | \ ||X-x||_{\infty} < \infty \} \ \mbox{inside}\ l_{\infty}=\{F\in\R^{\Z^d}\ | ||F||_{\infty}  < \infty \}\, .$$

%More importantly, because ${\rm osc}(x)\leq K$ and ${\rm osc}(X)\leq {\rm osc }(x)+{\rm osc}(X-x)$, it implies that 
Let $v\in l_\infty(G)$ and observe that the Fr\'echet derivative $D_XF(x,0): l_{\infty} \to l_{\infty}$ at $x$ is given by
\[
\left( D_XF(x,0)\cdot v \right)_g:=\left.\frac{d}{dt}\right|_{t=0}\!\! F(x+tv,0)_g = V''(x_g) \cdot v_g\, .
\] 
Because $V$ is a Morse function and $x_g\in [c_0,c_1]$, $|V''(x_g)| > \hat c$ for some constant $\hat c>0$ and for all $g\in G$. It follows that $D_XF(x,0): l_{\infty}\to l_{\infty}$ has a bounded inverse and we may define the quasi-Newton operator $K_{\rho, x}$ by 
$$K_{\rho, x}(X):=X-D_XF(x,0)^{-1}\cdot F(X,\rho), \ \mbox{i.e.}\ K_{\rho, x}(X)_g:= X_g - \frac{V'(X_g) + \mathbb{1}_{\mathcal{B}^{\inn}} (\rho\Delta_g(X))}{V''(x_g)}\ .$$
Obviously, $K_{\rho, x}$ maps $l_\infty$ into itself and it is clear that $K_{\rho, x}(X)=X$ if and only if $F(X,\rho)=0$. Restricted to an appropriately chosen small $l_\infty$ ball $\{X \ | \ \|X-x\|_{\infty}\leq \sigma_0\}$ around $x$, the operator $K_{\rho, x}$ moreover acts as a very strong contraction that sends this ball into itself. 

This can for example be seen from the following standard argument. Let us choose any desired contraction constant $0<k<1$ and, accordingly, a $0<\sigma_0<\frac{1}{2}$ with the property that $|V''(X_g)-V''(x_g)|\leq \frac{k\hat c}{2}$ uniformly for $X\in \{X \ | \ \|X-x\|_{\infty}\leq \sigma_0\}$. Such $\sigma_0$ exists because $V''$ is assumed continuous and because $V$ only has finitely many stationary points. For example, when $V''$ is Lipschitz continuous with Lipschitz constant $L$, then it suffices to choose $\sigma_0=\frac{k\hat c}{2L}$. For later reference, let us remark that it holds automatically that the intervals
\begin{align}
\label{disjointintervals}
 (x_g-\sigma_0, x_g+\sigma_0) \cap (x_{\tilde g}-\sigma_0, x_{\tilde g}+\sigma_0) =\varnothing\ \mbox{if} \ x_g\neq x_{\tilde g}\ \mbox{are critical points of}\ V\, .
\end{align}

We now have for $g\in \mathcal{B}^{\inn}$ and $X,Y$ with $\|X-x\|_{\infty}\leq \sigma_0$ and $\|Y-x\|_{\infty}\leq \sigma_0$, that
\begin{align}\label{contraction}
|K_{\rho, x}(X)_g-K_{\rho, x}(Y)_g| \leq \left|X_g-Y_g -\frac{V'(X_g)-V'(Y_g)}{V''(x_g)}\right| + \left|\frac{\rho \Delta_g(X) -\rho \Delta_g(Y)}{V''(x_g)}\right|\, .
\end{align}
To estimate the first term in (\ref{contraction}) we write
$$X_g-Y_g - \frac{V'(X_g)-V'(Y_g)}{V''(x_g)}= \frac{X_g-Y_g}{V''(x_g)}\left(\int_0^1\!V''(x_g)-V''(tX_g+(1-t)Y_g)\ \!dt\right)  \, ,$$  
which shows that the first term in (\ref{contraction}) is bounded from above by $\frac{k}{2}||X-Y||_{\infty}$. To estimate the second part of the sum in (\ref{contraction}), we note that since $\|X-x\|_{\infty}\leq \sigma_0$ and because $x_g\in [c_0,c_1]$ for all $g$, it holds that $|X_{gs}-X_g|\leq c_1-c_0+2\sigma_0$, and similarly for $Y$. It then follows by linearity of $\Delta$ that $$|\Delta_g(X) -\Delta_g(Y)|\leq (c_1-c_0+2\sigma_0)2(\#S)\|X-Y\|_\infty.$$
%To  estimate the second term in (\ref{contraction}) we argue as above to conclude that there exists a constant $C_2>0$ for which
%$$|\p_{i,k}S_j(X)| \leq C_2/(2r+1)^{2d} \ \mbox{for all}\ i,j,k\ \mbox{and all}\ X \ \mbox{with}\ {\rm osc}(X)\leq K+1\, .$$
%In turn, this implies for all $X, Y$ with ${\rm osc}(X), {\rm osc}(Y)\leq K+1$ that
%\begin{align}\nonumber 
%& |R_i(X) - R_i(Y)| = \left| \int_0^1  \frac{d}{dt} R_i(t X +(1-t)Y)  dt \right|\\ \nonumber & =   \left| \int_0^1  \sum_{||j-i||\leq r} \frac{d}{dt} \p_iS_j(t X +(1-t)Y)   dt \right|   \\ \nonumber & \leq  \sum_{||k-j||\leq r}\sum_{||j-i||\leq r} \int_0^1 |\p_{i, k}S_j(t X + (1-t)Y)|\, dt \cdot |X_k-Y_k| \\ \nonumber &  \leq C_2 ||X-Y||_{\infty}\, .
%\end{align}
%Thus, we have proved that $R$ is Lipschitz continuous: $$||R(X)-R(Y)||_{\infty} \leq C_2 ||X-Y||_{\infty}\, .$$
%As a first important remark, let us note that the set $$\mathbb{X}_\alpha:=\left\{X:\mathcal{K}\to \R\, \left|\, X_g \in \left[c_0-\alpha,c_1+\alpha \right] \right. \right\} \subset \R^{\mathcal{K}}$$ is compact in the topology of pointwise convergence, so $|X_{gs}-X_g|\leq c_1-c_0+2\alpha$ for all $g,s$ and so also $|\Delta_g(X)|\leq (c_1-c_0+2\alpha)\# S$ for all such $X$.
By defining $\tilde c:=(c_1-c_0+2\sigma_0)2(\#S)$, we found that it holds for $X,Y$ as above that
 $$||K_{\rho, x}(X)-K_{\rho, x}(Y)||_{\infty}\leq \left(\frac{k}{2}+\frac{\rho\cdot\tilde c}{\hat c}\right) ||X-Y||_{\infty}\ .$$
To investigate whether $K_{\rho, x}$ maps the $\sigma_0$-ball around $x$ to itself, let $||X-x||_{\infty}< \sigma_0$ and observe that it follows that
$$\begin{aligned}|K_{\rho, x}(X)_g-x_g|\leq |K_{\rho, x}(X)_g-K_{\rho, x}(x)_g|+ |K_{\rho, x}(x)_g - x_g |<  \sigma_0\left(\frac{k}{2}+\frac{\rho\cdot\tilde c}{\hat c}\right)+ \left(\frac{ \rho\cdot\tilde c}{\hat c}\right)\ ,\end{aligned}$$
where the final estimate holds because $V'(x_g)=0$ and $|\Delta_g(x)| \leq \#S(c_1-c_0)<\tilde c$.

This proves that $K_{\rho, x}$ is a contraction with constant $k$ which maps the $\sigma_0$-ball around $x$ to itself if $0\leq \rho\leq \rho_0 := \min\left\{\frac{k\hat c}{2\tilde c}, \frac{(1-k)\sigma_0\hat c}{\tilde c}\right\}$. Clearly then $\lim_{\sigma_0\searrow 0}\rho_0=0$.
\end{proof}

\begin{small}
\bibliographystyle{amsplain}
\bibliography{anti-harmonic}
\end{small}

\end{document}